\newtheorem{theorem}{Theorem}[section]
\newtheorem{lemma}[theorem]{Lemma}
\newtheorem{proposition}[theorem]{Proposition}
\newtheorem{conjecture}[theorem]{Conjecture}
\newtheorem{definition}{Definition}
\newenvironment{proof}[1][Proof]{\begin{trivlist}
\item[\hskip \labelsep {\bfseries #1}]}{\end{trivlist}}
\newenvironment{remark}[1][Remark]{\begin{trivlist}
\item[\hskip \labelsep {\bfseries #1}]}{\end{trivlist}}
\newenvironment{example}[1][Example]{\begin{trivlist}
\item[\hskip \labelsep {\bfseries #1}]}{\end{trivlist}}
    \newcommand\quotient[2]{
        \mathchoice
            {% \displaystyle
                \text{\raise1ex\hbox{$#1$}\Big/\lower1ex\hbox{$#2$}}%
            }
            {% \textstyle
                #1\,/\,#2
            }
            {% \scriptstyle
                #1\,/\,#2
            }
            {% \scriptscriptstyle  
                #1\,/\,#2
            }
    }
\def\Ueressl2{U_\varepsilon^{\mathrm{res}}(\widehat{\mathfrak{sl}_2})}
\def\Cres{\mathcal{C}^{}_\varepsilon}
\def\CZ{\mathcal{C}_{q^\mathbb{Z}}}
\def\CZres{\mathcal{C}_{\varepsilon^\mathbb{Z}}}
\def\Rep{\mathrm{Rep}\:}
\def\Uqlg{U_q(L\mathfrak{g})}
\def\Uqreslg{U_q^{\mathrm{res}}(L\mathfrak{g})}
\def\Uereslg{U_\varepsilon^{\mathrm{res}}(L\mathfrak{g})}
\def\Ueereslg{U_{\varepsilon^*}^{\mathrm{res}}(L\mathfrak{g})}
\def\Ueereslsl2{U_{\varepsilon^*}^{\mathrm{res}}(L\mathfrak{sl}_2)}
\def\Uereslsl2{U_{\varepsilon}^{\mathrm{res}}(L\mathfrak{sl}_2)}
\def\Uesl3{U_{\varepsilon}^{\mathrm{res}}(L\mathfrak{sl}_3)}
\numberwithin{equation}{section}
\title{Quantum affine algebras at roots of unity and generalised cluster algebras}
\author{Anne-Sophie Gleitz}
\date{}
\begin{document}

\maketitle

\begin{abstract}
Let $\Uereslsl2$ be the restricted integral form  of the quantum loop algebra $U_q(L\mathfrak{sl}_2)$ specialised at a root of unity $\varepsilon$. We prove that the Grothendieck ring of a tensor subcategory of representations of $\Uereslsl2$ is a generalised cluster algebra of type $C_{l-1}$, where $l$ is the order of $\varepsilon^2$. Moreover, we show that the classes of simple objects in the Grothendieck ring essentially coincide with the cluster monomials. We also state a conjecture for $\Uesl3$, and we prove it for $l=2$.%, this ring is a generalised cluster algebra of type $G_2$. 
\end{abstract}

\section{Introduction}\label{intro}

Cluster algebras have been introduced in 2001 by Fomin and Zelevinski \cite{FZ1}. These rings have special generators, called \emph{cluster variables}. For every cluster $\mathbf{x}$, and every cluster variable $x\in\mathbf{x}$, there is a unique cluster $\left( \mathbf{x}\setminus \{x\}\right)\cup\{x'\}$, and an exchange relation
\begin{equation}\label{clu0}
xx' = m_+ + m_-\end{equation}
where $m_\pm$ are exchange monomials in $\mathbf{x}\setminus\{x\}$.
 Fomin and Zelevinsky \cite{FZ2} have proved a classification theorem for cluster algebras with finitely many clusters (also called of finite type), in terms of Cartan matrices.

We are interested in \emph{generalised cluster algebras}, introduced by Shapiro and Chekhov in 2011 \cite{CS}. The difference with standard cluster algebras resides in the exchange relations, whose right-hand side can include polynomials with more than two terms, unlike (\ref{clu0}). Otherwise, finite type classification and combinatorial behaviour stay the same \cite{CS}. 

We focus on a generalised cluster algebra $\mathcal{A}_n$ of Cartan type $C_n$, with a particular choice of coefficients, and describe its inner combinatorics. In particular, we describe several $\mathbb{Z}$-bases of $\mathcal{A}_n$.

On the other hand, the theory of finite-dimensional representations of the quantum loop algebra $U_q(L{\mathfrak{g}})$ for $q\in\mathbb{C}^*$ not a root of unity is well established. In this paper, we are interested in the case where $q=\varepsilon$ is a root of unity. The algebra $\Uqlg$ is then replaced by the restricted integral form $\Uereslg$, introduced and studied by Chari and Pressley \cite{CP2}, and later by Frenkel and Mukhin \cite{FM}.%, have studied this case, particularly the restricted integral form $\Uereslg$ (see Section \ref{s223}). 

In the spirit of Hernandez and Leclerc's papers \cite{HL} and \cite{HL2}, we consider a certain tensor category $\CZres$ of finite-dimensional $\Uereslg$-modules, and we show that when $\mathfrak{g}=\mathfrak{sl}_2$, the Grothendieck ring of $\CZres$ is isomorphic to $\mathcal{A}_{l-1}$ (see Theorem \ref{conjA1}), where $l$ is the order of $\varepsilon^2$. Moreover, under this isomorphism, the basis of classes of simple objects of $\CZres$ coincides with the basis of (generalised) cluster monomials, multiplied by Tchebychev polynomials in the single generator of the coefficient ring. 
This is proved by combining tools from the theory of generalised cluster algebras (see Section \ref{s21}), and from the representation theory of $\Uereslsl2$ (see Section \ref{s22}).

For $\mathfrak{g}=\mathfrak{sl}_3$ and $l=2$, we prove a similar result, where $\mathcal{A}_{l-1}$ is replaced by a generalised cluster algebra of type $G_2$. Extensive computations with Maple allow us to formulate a conjecture for $\mathfrak{g}=\mathfrak{sl}_3$ and $l>2$. However, the generalised cluster algebras occurring in this conjecture are of infinite type, and we still lack the proper tools to prove it.

\subsection*{Acknowledgements}
The author would like to thank B. Leclerc for his invaluable advice and insight throughout this work.%, and for very helpful explanations regarding the representation theory described in \cite{HL}.

\section{Cluster algebras}\label{s21}

We are interested in a structure that generalises the notion of cluster algebras, defined by Shapiro and Chekhov in \cite{CS}.

\subsection{Generalised cluster algebras }\label{s212}

We recall, following \cite{CS}, the definition and the main structural properties of generalised cluster algebras, see also \cite{N}.

For a fixed integer $n\in\mathbb{N}^*$, let $B=(b_{ij})\in\nolinebreak\mathcal{M}_n(\mathbb{Z})$ be a skew-symmetrisable matrix, i.e. such that there exists an integer diagonal matrix $\tilde{D}=\mathrm{diag}(\tilde{d}_1\dots\tilde{d}_n)$ such that $\tilde{D}B$ is skew-symmetric. 

Suppose that for each index $k\in\llbracket 1,n\rrbracket$, there is an integer $d_k\in\mathbb{N}$ that divides all coefficients $b_{ j k}$ in the $k$-th column. Introduce the notation
\begin{equation}\beta_{jk} := \displaystyle \frac{b_{jk}}{d_k} \in\mathbb{Z}.\end{equation}

Let $(\mathbb{P},\cdot,\oplus)$ be a commutative semifield, called the \emph{coefficient group}.  For example, one can take for $\mathbb P$ the tropical semifield $\mathrm{Trop}(\lambda_1,\dots,\lambda_n)$ generated by some indeterminates $\lambda_1,\dots,\lambda_n$. This is by definition the set of Laurent monomials in the $\lambda_i$'s, with ordinary multiplication and tropical addition
\begin{equation*}
\left(\displaystyle\prod_i \lambda_i^{a_i}\right) \oplus \left(\displaystyle\prod_i \lambda_i^{b_i}\right) = \left(\displaystyle\prod_i \lambda_i^{\min(a_i,b_i)}\right).
\end{equation*}
Let $\mathcal{F}=\mathbb{ZP}(t_1,\dots,t_n)$ be the ambient field of rational functions in $n$ independent variables, where $\mathbb{ZP}$ is the integer group ring of $\mathbb{P}$.

For a collection of variables $\mathbf{p}_i=(p_{i,0}, p_{i,1},\dots, p_{i,d_i})\in\mathbb{P}^{d_i+1}\quad(i\in\llbracket 1,n\rrbracket)$, define the corresponding homogeneous \emph{exchange polynomial}
\begin{equation}\theta_i\lbrack \mathbf{p}_i\rbrack(u,v) := \displaystyle \sum_{r=0}^{d_i} p_{i,r} u^r v^{d_i-r}\in\mathbb{ZP}\lbrack u,v\rbrack.\end{equation}

\begin{definition}
A \emph{generalised seed} is a triple $(\mathbf{x},\bar{\mathbf{p}},B)$ where
\begin{enumerate}
\item[(i)] the tuple $\mathbf{x}=\{x_1,\dots,x_n\}$, called a \emph{cluster}, is a collection of algebraically independent elements of $\mathcal F$, called \emph{cluster variables}, which generate $\mathcal F$ over $\mathrm{Frac}\:\mathbb{ZP}$;
\item[(ii)] the matrix $B=(b_{ij})\in\mathcal{M}_n(\mathbb{Z})$, called the \emph{exchange matrix}, is skew-symmetrisable;
\item[(iii)]  $\bar{\mathbf{p}}=(\mathbf{p}_1,\dots,\mathbf{p}_n)$ is a \emph{coefficient tuple},  where for each $i\in\llbracket 1,n\rrbracket$, the tuples $\mathbf{p}_i=(p_{i,0},p_{i,1},\dots,p_{i,d_i})\in\mathbb{P}^{d_i+1}$  are the coefficients of the $i$-th exchange polynomial $\theta_i$.

\end{enumerate} 
\end{definition}

The triple  $(\mathbf{x},\{\theta_1,\dots,\theta_n\},B)$ is also called a generalised seed.

\begin{definition}\label{genmut}
The \emph{generalised mutation in direction $k\in\llbracket 1,n\rrbracket$}, is the operation that transforms a generalised seed $(\mathbf{x},\bar{\mathbf{p}},B) $ into another generalised seed  $\mu_k(\mathbf{x},\bar{\mathbf{p}},B):=(\mathbf{x}',\bar{\mathbf{p}'},B')$
given by
\begin{enumerate}
\item[(i)]\mbox{\emph{matrix mutation}}: the matrix $B'=(b'_{ij})$ is defined by
\begin{equation}b'_{ij} =\left\{\begin{array}{ll}
-b_{i j} &\mbox{if } i=k \mbox{ or } j=k\\
b_{ij} + \displaystyle\frac{1}{2}\left( |b_{ik}| b_{k j} + b_{ik} |b_{kj}|\right) &\mbox{otherwise} 
\end{array}  \right.\end{equation}
\item [(ii)] \mbox{\emph{cluster mutation:}}
\begin{equation}\left\{\begin{array}{ll}
x'_i = x_i &\mbox{if } i\neq k\\
x_k x'_k = \theta_k\lbrack \mathbf{p}_k\rbrack(u_k^+,u_k^-)
\end{array}\right.\end{equation}
where we define
\begin{equation}u_k^+ := \displaystyle \prod_{j=1}^n x_j^{\lbrack \beta_{jk}\rbrack_+} \quad\mbox{and}\quad u_k^- := \displaystyle \prod_{j=1}^n x_j^{\lbrack -\beta_{jk}\rbrack_+} .\end{equation}
\item[(iii)]  \mbox{\emph{coefficient mutation:}} 
\begin{equation}\left\{\begin{array}{ll}
p'_{k,r}=p_{k,d_k-r}\\\\
\displaystyle \frac{p'_{i,r}}{p'_{i,r-1}}&= \left\{ \begin{array}{ll}
(p_{k,d_k})^{\beta_{k i}} \displaystyle \frac{p_{i,r}}{p_{i,r-1}} &\mbox{if } i\neq k \mbox{ and } b_{ki}\geq 0\\\\
(p_{k,0})^{\beta_{k i}} \displaystyle \frac{p_{i,r}}{p_{i,r-1}} &\mbox{if } i\neq k \mbox{ and } b_{ki}\leq 0
\end{array} \right.
\end{array}\right.\end{equation}

\end{enumerate}
\end{definition}

For $r\in\llbracket 1,n\rrbracket$, write $\mu_r(B):=(b'_{ij})$. It follows easily from the definition of matrix mutation that for each $k\in\llbracket 1,n\rrbracket$,  the integer $d_k$ divides all coefficients in the $k$-th column of $\mu_r(B)$. Moreover, note that $\mu_r$ is an involution. We say that two generalised seeds are \emph{mutation-equivalent} if one can be obtained from the other by performing a finite sequence of mutations. %Similarly, we say that two exchange matrices are mutation-equivalent.

Observe that if $d_i=1$ for all $i$, then the exchange polynomials are of the form $\theta_i(u,v)=p_{i,0}u+p_{i,1}v$. We then recover the ordinary notions of seed and seed mutation from \cite{FZ1} and \cite{FZ2} by setting $p_i^+=p_{i,1}$ and $p_i^-=p_{i,0}$.

\begin{definition}
The \emph{generalised cluster algebra} $\mathcal{A}(\bar{\mathbf{p}},B)=\mathcal{A}(\mathbf{x},\{\theta_1,\dots,\theta_n\},B)$ of \emph{rank} $n$, corresponding to the generalised seed $(\mathbf{x},\{\theta_1,\dots,\theta_n\},B)$, is the $\mathbb{ZP}$-subalgebra of $\mathcal{F}$
%$\mathbb{Q}(x_1,\dots,x_n)$
 generated by all cluster variables from all the seeds that are mutation-equivalent to the initial seed $(\mathbf{x},\{\theta_1,\dots,\theta_n\},B)$.
\end{definition}

We say that a generalised cluster algebra is of \emph{finite type} if it has finitely many cluster variables. 
The Laurent phenomenon from \cite{FZ1} remains true for generalised cluster algebras.

\begin{theorem}[{\cite[Theorem 2.5]{CS}}]\label{geneLau}
Every generalised cluster variable is a \\Laurent
polynomial in the initial cluster variables.
\end{theorem}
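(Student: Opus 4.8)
The plan is to adapt Fomin and Zelevinsky's proof of the Laurent phenomenon for ordinary cluster algebras to the generalised setting, proceeding by induction on the length of the mutation sequence leading from the initial seed to a given cluster variable. As in the classical case, the bare Laurentness statement is too weak to carry through the induction, so I would prove a stronger claim: every cluster variable $z$ obtained from the initial cluster $\mathbf{x}=\{x_1,\dots,x_n\}$ is of the form $z=N/(x_1^{a_1}\cdots x_n^{a_n})$ where $N\in\mathbb{ZP}[x_1,\dots,x_n]$ and, moreover, $N$ is coprime to $x_i$ whenever $a_i\geq 1$. This coprimality refinement is what allows cancellations to be controlled when a variable is expressed in terms of a cluster two mutation steps away.

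The core of the argument is a local reduction, the analogue of Fomin--Zelevinsky's ``Caterpillar Lemma''. Fix a cluster variable $z$ and a mutation path reaching it; let $k$ be the first mutation direction and $l$ a second direction, so that after mutating at $k$, then at $l$, then back at $k$, one returns to a seed whose cluster agrees with the original in all but at most two entries. I would show that if the cluster variables produced at the intermediate seeds are Laurent, with the coprimality property, over their respective clusters, then $z$ is Laurent over $\mathbf{x}$. Concretely, one uses the exchange relation $x_k x'_k=\theta_k[\mathbf{p}_k](u_k^+,u_k^-)$ to substitute, clears denominators, and checks that the only variable that could survive in a denominator after substitution is $x_k$ itself, whose apparent pole is removed by the coprimality hypothesis.

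The main obstacle, and the step where the generalised setting genuinely differs from the classical one, is establishing that the numerator of $x'_k$ is coprime to $x_k$. In the ordinary case this reduces to showing that a binomial is not divisible by $x_k$, which is immediate. Here the exchange polynomial $\theta_k[\mathbf{p}_k](u_k^+,u_k^-)=\sum_{r=0}^{d_k}p_{k,r}(u_k^+)^r(u_k^-)^{d_k-r}$ has $d_k+1$ terms, so I must argue that this homogeneous polynomial of degree $d_k$, evaluated at the Laurent monomials $u_k^+$ and $u_k^-$, shares no common factor with $x_k$. The divisibility hypothesis $d_k\mid b_{jk}$ is essential here: it guarantees that each $\beta_{jk}=b_{jk}/d_k$ is an integer, hence that $u_k^+$ and $u_k^-$ are honest monomials in the $x_j$ with $j\neq k$, so that $x_k$ divides none of the summands and coprimality is preserved.

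With the reduction lemma and the coprimality step in hand, the induction closes routinely: the base case of a single mutation is the exchange relation itself, and the inductive step applies the reduction lemma to peel off the first mutation, invoking the inductive hypothesis on the strictly shorter mutation sequences that produce the intermediate variables. This yields the Laurent property for every generalised cluster variable.
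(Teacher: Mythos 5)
The paper contains no proof of Theorem \ref{geneLau}: it is quoted verbatim from Chekhov--Shapiro \cite{CS}, whose argument is precisely the Fomin--Zelevinsky-style induction you outline, so your overall strategy is the right one. However, your sketch mislocates the difficulty. The coprimality of $\theta_k[\mathbf{p}_k](u_k^+,u_k^-)$ with $x_k$, which you call the main obstacle, is immediate and has nothing to do with the hypothesis $d_k\mid b_{jk}$: since $b_{kk}=0$, the monomials $u_k^{\pm}$ involve only the $x_j$ with $j\neq k$, so the evaluated exchange polynomial visibly does not contain $x_k$, whatever the number of its terms. The divisibility hypothesis is needed earlier and for a different reason, namely to make the exponents $\beta_{jk}$ integers so that $u_k^{\pm}$ lie in the Laurent ring at all; it plays no role in any coprimality statement.

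The step that genuinely distinguishes the generalised setting, and which your plan never engages, is the verification of the two conditions that make the three-seed reduction (your analogue of the Caterpillar Lemma) close. First, after mutating at $k$ and then at $l$, the exchange polynomial in direction $k$ of the new seed has coefficients transformed by the coefficient-mutation rule of Definition \ref{genmut}(iii), including the reversal $p'_{k,r}=p_{k,d_k-r}$; one must check that this transformed polynomial, evaluated at the new $u_k^{\pm}$, agrees up to a Laurent-monomial unit with the result of substituting $x_l\mapsto \theta_l[\mathbf{p}_l](u_l^+,u_l^-)/x_l'$ into the original $\theta_k(u_k^+,u_k^-)$ --- the analogue of condition (3) in Fomin--Zelevinsky's lemma from \cite{FZ1}, and the place where the specific form of the coefficient mutation is forced. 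Second, one needs the evaluated polynomials $\theta_k(u_k^+,u_k^-)$ and $\theta_l(u_l^+,u_l^-)$ to be coprime in the Laurent polynomial ring; it is this coprimality, not the trivial $x_k$-statement, that controls cancellation and removes the denominators in the variables $x_i$ with $i\neq k$ after the back-mutation. With $d_k+1$ terms and mutating coefficients, both verifications require genuine computation and constitute the actual content of Chekhov--Shapiro's proof; as written, your induction does not close without them.
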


Two generalised cluster algebras $\mathcal{A}(\bar{\mathbf{p}},B)\subset\mathcal{F}$ and $\mathcal{A}(\bar{\mathbf{p}}',B')\subset\mathcal{F}'$ over the same semifield $\mathbb{P}$ are called \emph{strongly isomorphic} if there is a $\mathbb{ZP}$-isomorphism $\mathcal{F}\rightarrow\mathcal{F}'$ that sends any generalised seed of $\mathcal{A}(\bar{\mathbf{p}},B)$ onto a generalised seed $\mathcal{A}(\bar{\mathbf{p}}',B')$.
 This induces a bijection between the sets of generalised seeds, as well as an algebra isomorphism $\mathcal{A}(\bar{\mathbf{p}},B)\cong\mathcal{A}(\bar{\mathbf{p}}',B').$
 Every generalised cluster algebra $\mathcal{A}(\bar{\mathbf{p}},B)$ over a semifield $\mathbb{P}$ belongs to a series $\mathcal{A}(-,B)$, consisting in all the generalised cluster algebras $\mathcal{A}(\bar{\mathbf{p}},B)$ where $B$ is fixed and $\bar{\mathbf{p}}$ may vary.
 We say that two series $\mathcal{A}(-,B)$ and $\mathcal{A}(-,B')$ are \emph{strongly isomorphic} if $B$ and $B'$ are mutation-equivalent, modulo simultaneous relabeling of  rows and columns.

%We say that $\mathcal{A}(\mathbf{p},B)$ is of \emph{finite type} if it has finitely many generalised seeds.

Let $M=(m_{ij})\in\mathcal{M}_n(\mathbb{Z})$. The Cartan counterpart of $M$ is the generalised Cartan matrix $A=A(M)=(a_{ij})\in\mathcal{M}_n(\mathbb{Z})$ defined by
\begin{equation}a_{ij}=\left\{\begin{array}{ll}
2&\mbox{if }i=j\\
-|m_{ij}|&\mbox{if }i\neq j.
\end{array}              \right.\end{equation}

%The notion of strong isomorphism also applies to generalised cluster algebras, where the correspondence is established between generalised seeds.

\begin{theorem}[{\cite[Theorem 2.7]{CS}}]\label{geneCla}
Generalised cluster algebras of finite type follow the same Cartan-Killing classification as standard cluster algebras. Namely, there is a canonical bijection between the Cartan matrices of finite type and the strong isomorphism classes of series of generalised cluster algebras of finite type. Under this bijection, a Cartan matrix $A$ of finite type corresponds to the series $\mathcal{A}(-,B)$, where $B$ is a skew-symmetrisable matrix such that $A(B)=A$. 
\end{theorem}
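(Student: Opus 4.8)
The proof follows the strategy of Fomin–Zelevinsky's finite type classification \cite{FZ2}, and the crucial point is that the matrix mutation rule of Definition \ref{genmut}(i) is \emph{identical} to the ordinary one: it involves neither the degrees $d_k$ nor the coefficients $\bar{\mathbf{p}}$. Hence the set of exchange matrices occurring in a mutation class, together with all its combinatorics, coincides with that of the ordinary cluster algebra attached to the same initial $B$. The first goal is therefore to show that finite type is a property of the \emph{series} $\mathcal{A}(-,B)$ alone, i.e. that the number of cluster variables does not depend on the choice of $\bar{\mathbf{p}}$, and is governed entirely by the $B$-matrix dynamics.

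To make the link precise, I would pass to denominator vectors. By the Laurent phenomenon (Theorem \ref{geneLau}), every generalised cluster variable $x$ is a Laurent polynomial in the initial cluster $\mathbf{x}=\{x_1,\dots,x_n\}$, so it has a well-defined \emph{denominator vector} $\mathbf{d}(x)\in\mathbb{Z}^n$. The exchange polynomial $\theta_k[\mathbf{p}_k](u_k^+,u_k^-)=\sum_{r=0}^{d_k}p_{k,r}(u_k^+)^r(u_k^-)^{d_k-r}$ has its denominator, as a Laurent polynomial in $\mathbf{x}$, governed by the two extreme monomials $r=d_k$ and $r=0$, the intermediate exponent vectors being convex combinations of these two. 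Because the coefficients $p_{k,r}$ lie in the semifield $\mathbb{P}$ there is no cancellation, and the denominator of $x'_k$ is controlled by $(u_k^+)^{d_k}$ and $(u_k^-)^{d_k}$. Using the key identity $d_k\beta_{jk}=b_{jk}$, these extremes involve exactly $\sum_j[b_{jk}]_+\mathbf{d}(x_j)$ and $\sum_j[-b_{jk}]_+\mathbf{d}(x_j)$, so that the denominator vectors obey the tropical recurrence
\begin{equation*}
\mathbf{d}(x'_k)=-\mathbf{d}(x_k)+\max\Bigl(\textstyle\sum_j[b_{jk}]_+\mathbf{d}(x_j),\ \sum_j[-b_{jk}]_+\mathbf{d}(x_j)\Bigr),
\end{equation*}
which is precisely the ordinary Fomin–Zelevinsky denominator recurrence. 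Consequently, running the same sequence of mutations in the generalised and in the ordinary cluster algebra produces the same denominator vectors; the set of denominator vectors of the series $\mathcal{A}(-,B)$ equals that of the ordinary cluster algebra with matrix $B$, independently of $\bar{\mathbf{p}}$.

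One can now transfer the classification. If $A(B)$ is a Cartan matrix of finite type, then by \cite{FZ2} the denominator vectors are exactly the almost positive roots $\Phi_{\ge -1}$ of the corresponding root system, a finite set; together with the injectivity of the map $x\mapsto\mathbf{d}(x)$ (which itself needs the generalised analogue of the argument of \cite{FZ2}) this forces finitely many cluster variables, hence finite type. Conversely, if $A(B)$ is not of finite type, I would follow \cite{FZ2} and reduce to rank $2$ via $2$-finiteness: if the mutation class of $B$ is infinite, then after some mutations one finds a $2\times 2$ principal submatrix $\left(\begin{smallmatrix}0&b\\-c&0\end{smallmatrix}\right)$ with $bc\ge 4$. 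The heart of the argument is then the explicit rank-$2$ computation: for the generalised exchange relation $x_{m-1}x_{m+1}=\theta(\dots)$ one shows that the sequence $(x_m)$ is periodic precisely when $bc\le 3$, recovering the finite types $A_1\times A_1$, $A_2$, $B_2=C_2$, $G_2$, and is infinite otherwise.

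I expect the rank-$2$ base case to be the main obstacle. Unlike the matrix dynamics, the periodicity of the sequence $(x_m)$ now genuinely involves the higher-degree exchange polynomials together with the coefficient mutation of Definition \ref{genmut}(iii), and one must check that these interact so as to let the recurrence close up after the same number of steps as in \cite{FZ2}; this is exactly where the generalised setting departs from the classical one and requires the careful bookkeeping of \cite{CS}. Granting this, the bijection of the statement is assembled from the $B$-matrix combinatorics, which is identical to the ordinary case: a finite type Cartan matrix $A$ is sent to the strong isomorphism class of the series $\mathcal{A}(-,B)$ with $A(B)=A$; well-definedness and injectivity follow because, in finite type, two matrices $B,B'$ are mutation-equivalent up to relabeling if and only if $A(B)$ and $A(B')$ have the same Cartan–Killing type, which is precisely the definition of strong isomorphism of series.
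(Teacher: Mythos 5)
The paper itself offers no proof of this statement: Theorem \ref{geneCla} is quoted verbatim from \cite[Theorem 2.7]{CS}, so there is no internal argument to measure your proposal against, and it must be judged on its own terms. Your structural observations are correct and do capture why the classification should transfer: generalised matrix mutation (Definition \ref{genmut}(i)) coincides with ordinary matrix mutation, so the mutation class of $B$, the $2$-finiteness criterion and the rank-$2$ reduction of \cite{FZ2} are available unchanged; and the identity $d_k\beta_{jk}=b_{jk}$ shows that the two extreme monomials of $\theta_k\lbrack\mathbf{p}_k\rbrack(u_k^+,u_k^-)$ are exactly the ordinary exchange monomials $\prod_j x_j^{\lbrack \pm b_{jk}\rbrack_+}$, with the intermediate exponent vectors sandwiched componentwise between them. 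This is indeed the expected Fomin--Zelevinsky-style skeleton for the Chekhov--Shapiro result.

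As a proof, however, the proposal stops short at exactly the points where the generalised setting could bite, and you say so yourself. First, the denominator recurrence: in general one only gets the componentwise inequality $\mathbf{d}(x'_k)\le -\mathbf{d}(x_k)+\max\left(\sum_j \lbrack b_{jk}\rbrack_+\mathbf{d}(x_j),\ \sum_j\lbrack -b_{jk}\rbrack_+\mathbf{d}(x_j)\right)$; equality requires non-cancellation of the extremal monomials after each $x_j$ is itself expanded in the initial cluster, where coefficients live in $\mathbb{ZP}$ (not in $\mathbb{P}$) and positivity of Laurent expansions is not available --- note that even in \cite{FZ2} the classification does not rest on a general max-recurrence, but on explicit rank-$2$ computations and the root-system realisation in finite type. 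Second, the two steps you explicitly ``grant'' --- injectivity of $x\mapsto\mathbf{d}(x)$ on generalised cluster variables, and periodicity of the rank-$2$ recurrence $x_{m-1}x_{m+1}=\theta(\dots)$ for $bc\le 3$ in the presence of mutating coefficient tuples (Definition \ref{genmut}(iii)) and of the divisibility constraints $d_1\mid c$, $d_2\mid b$ (compare the paper's type $C$ seed with $(b,c,d)=(2,1,2)$ and its $G_2$ seed with $(b,c,d)=(3,1,3)$) --- are precisely the new content of the theorem; deferring them to ``the careful bookkeeping of \cite{CS}'' leaves the heart of the argument unproved. So the text is a correct and well-aimed plan rather than a proof; the seed-level statements it needs are actually established in \cite{CS} and, in more structural form, in Nakanishi's work \cite{N}.
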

%Namely, there is a bijection between Cartan matrices of finite type and the strong isomorphism classes of series of generalised cluster algebras of finite type.

Finally, we recall that a \emph{cluster monomial} in $\mathcal A(\bar{\mathbf p},B)$ is a monomial in the cluster variables involving only variables belonging to a single cluster.

\subsection{A generalised cluster algebra of type \texorpdfstring{$C_n$}{Cn}}\label{s213}

\subsubsection{Combinatorics and exchange relations}\label{s231}

The \emph{exchange graph} of a cluster algebra is the graph whose vertices are the clusters, and two clusters are linked by an edge if they can be obtained from each other by one mutation.

We know from \cite[Section 12.3]{FZ2} that for a cluster algebra of type $C_{n}$, the exchange graph is isomorphic to the $n$-dimensional cyclohedron. It has a nice description in terms of triangulations of a regular $(2n+2)$-gon $\mathbf{P}_{2n+2}$.

More precisely, each cluster variable can be associated with either a centrally symmetric pair of diagonals, or a diameter. Under this bijection, each vertex of the exchange graph corresponds to a centrally symmetric triangulation, and two such triangulations are linked by an edge if they can be obtained from each other either by a flip involving two diameters, or by a pair of centrally symmetric flips. Note that each centrally symmetric triangulation contains a unique diameter.

For a standard cluster algebra, the exchange relations correspond to Ptolemy relations in the appropriate quadrilaterals. For a generalised cluster algebra, certain formulas are slightly more complicated, see Proposition \ref{pCn} below.

Let us identify the set $\Sigma$ of vertices of $\mathbf{P}_{2n+2}$ with the cyclic group  $$\mathbb{Z}/(2n+2)\mathbb{Z}\cong 2 \mathbb{Z}/(4n+4)\mathbb{Z},$$ by labelling the vertices clockwise: 0,2,4,$\dots$,$2n$,\\$2n+2$,$2n+4$,$\dots$, $4n+2$, with the natural additive law induced by the cyclic group. We rename half of the vertices in the following way: for each $k\in\llbracket 0,n\rrbracket$, write
\begin{equation}(2n+2)+2k := \overline{2k}.\end{equation}
In particular, $2n+2=\bar{0}$ and $\overline{2n}+2=0$.
 This makes it easier to identify centrally symmetric pairs of diagonals. It might seem odd to use "$2k$" instead of just "$k$", but this notation will turn out to be the most natural one for Section \ref{s3}.

Let $\mathscr{C}$ be the circle in which $\mathbf{P}_{2n+2}$ is inscribed, and let $\Theta$ be the central symmetry around the center of $\mathscr{C}$.
 Consider a pair $\{\lbrack a,b \rbrack, \lbrack \bar{a},\bar{b}\rbrack\}$ of centrally symmetric diagonals.%, such that $a<b\in\llbracket 0,2n\rrbracket$. 
We may choose $\lbrack a,b\rbrack$ to represent the $\Theta$-orbit of this pair. The segment  $\lbrack a,b\rbrack$ divides the circle $\mathscr{C}$ into two arcs. The $\Theta$-orbits of the vertices of $\mathbf{P}_{2n+2}$ that lie on the smallest arc form a set denoted by $\mathcal{O}_{ab}$. For example, if $a<b\in\llbracket 0,2n\rrbracket$, the set $\mathcal{O}_{ab}$ consists of the $\Theta$-orbits of $a+2$, $a+4$,$\dots$, $b-2$. 
In general, we have $\mathcal{O}_{ab}=\mathcal{O}_{ba}=\mathcal{O}_{\bar{a}\bar{b}}=\mathcal{O}_{\bar{b}\bar{a}}.$

\begin{example}
In type $C_3$, the vertices of the regular octagon $\mathbf{P}_8$ will be numbered as in Figure \ref{fig:octa}. 
 For example, the pair of centrally symmetric diagonals $\{\lbrack 2,\bar{0}\rbrack,\lbrack  \bar{2},0\rbrack\}$ corresponds to the set $\mathcal{O}_{2,\bar{0}}$, which consists of the $\Theta$-orbits $\{4,\overline{4}\}$ and $\{6,\overline{6}\}$.

\begin{figure}[!ht]
\centering
\begin{tikzpicture}
%\node [draw, thick,  black,rotate=22.5,minimum size=4cm,regular polygon, regular polygon sides=8] at (0,0) {};  
\newdimen\R
\R=1.1cm
\draw (0:\R)
\foreach [count=\n] \x in {45,90,...,360} {
        -- (\x:\R) }
 -- cycle (180:\R) node[left] {0}
-- cycle (270: \R) node[below] {$\bar{4}$}
-- cycle (315: \R) node[below right] {$\bar{2}$}
              --cycle (225: \R) node[below left] {$\bar{6}$}
              -- cycle (0:\R) node[ right] {$\bar{0}$}
              -- cycle (135:\R) node[above left] {2}
              -- cycle (90:\R) node[above] {4}
              -- cycle  (45:\R) node[above right] {6};
\end{tikzpicture}
\caption{The regular octagon $\mathbf{P}_8$}\label{fig:octa}
\end{figure}
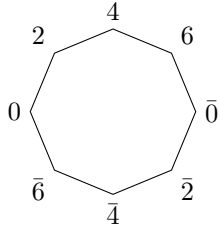
\end{example}

We label cluster variables by the corresponding $\Theta$-orbits of diagonals. Namely, if $b\neq \overline{a}$, the variable $x_{ab}$ corresponds to the pair of diagonals $\{\lbrack a,b\rbrack, \lbrack \bar{a},\bar{b}\rbrack\}$. If $b=\overline{a}$, the variable $x_{a\overline{a}}$ corresponds to the diameter $\{\lbrack a,\overline{a}\rbrack\}$. Thus we have \begin{equation}x_{ab}=x_{ba}=x_{\bar{a}\bar{b}}=x_{\bar{b}\bar{a}}.\end{equation}
By convention, if $a$ and $b$ are neighbours in $\mathbf{P}_{2n+2}$, we set $x_{ab}=1$. 
Note that each cluster variable $x_{ab}$ may also be labelled by the set $\mathcal{O}_{ab}$.

Theorem \ref{geneCla} allows us to use the same labeling system for a generalised cluster algebra of type $C_n$. In particular, mutations can be seen as flips between triangulations of $\mathbf{P}_{2n+2}$. 

The following example is a particular case of the more general Definition \ref{dCn}.

\begin{example} In type $C_3$, %, the combinatorial model is based on the octagon $\mathbf{P}_8$. C
consider  the following initial seed, with coefficient group $\mathbb{P}=\mathrm{Trop}(\lambda)=\mathbb Z \lbrack \lambda^{\pm 1}\rbrack$:
\begin{equation}\Pi_0:=(\mathbf{x}^{(0)},\{\theta_1,\theta_2,\theta_3\},B)\end{equation}
where we set
\begin{equation}\mathbf{x}^0 = (x_{2\bar{6}},x_{4\bar{6}},x_{6\bar{6}}),\quad \left\{ \begin{array}{l}
\theta_1(u,v)=u+v\\
\theta_2(u,v)=u+v\\
\theta_3(u,v)=u^2 + \lambda u v + v^2
\end{array}\right.,B=\left( \begin{array}{ccc}
0&1&0\\-1&0&2\\0&-1&0
\end{array}\right).   \end{equation}
Because of the special choice of coefficients, the exchange polynomials remain unaffected by mutation. 
Mutating $\Pi_0$, we obtain twelve cluster variables, which can be organised in 20 clusters, as in Figure \ref{ex:c3}. 
This corresponds to the 3-dimensional cyclohedron whose vertices are the non-crossing centrally symmetric triangulations of the octagon (Figure \ref{fig:cyclo3d}).

\begin{figure}[!ht]
\hspace*{0.5cm}
%%\resizebox{5cm}{5cm}{
$\xymatrix @R=-0.5pc @C=-0.4pc @M=0.0pc { 
&&&\mbox{$\begin{array}{c}x_{2\bar{6}},x_{4\bar{6}},\\x_{4\bar{4}}\end{array}$}\ar@{-}[rr] &&\mbox{$\begin{array}{c}x_{2\bar{6}},x_{2\bar{4}},\\x_{4\bar{4}}\end{array}$}\ar@{-}[rrd] &&&&&\\
&\mbox{$\begin{array}{c}x_{04},x_{4\bar{6}},\\x_{4\bar{4}}\end{array}$}\ar@{-}[rru]\ar@{-}[ld]&&&&&&\mbox{$\begin{array}{c}x_{0\bar{4}},\\x_{2\bar{4}},\\x_{4\bar{4}}\end{array}$}&&&&&&&&&&&&&&&&&&\\
\mbox{$\begin{array}{c}x_{04},x_{4\bar{6}},\\x_{6\bar{6}}\end{array}$}\ar@{-}[rr] & &\mbox{$\begin{array}{c}x_{2\bar{6}},\\x_{4\bar{6}},\\x_{6\bar{6}}\end{array}$}\ar@{-}[uur]\ar@{-}[dr]&&\mbox{$\begin{array}{c}x_{04},\\x_{06},\\x_{4\bar{4}}\end{array}$}\ar@{.}[ulll]\ar@{.}[dd]\ar@{.}[urrr]&&\mbox{$\begin{array}{c}x_{2\bar{6}},\\x_{2\bar{4}},\\x_{2\bar{2}}\end{array}$}\ar@{-}[uul]\ar@{-}[dl]\ar@{-}[rr] &&\mbox{$\begin{array}{c}x_{0\bar{4}},x_{2\bar{4}},\\x_{2\bar{2}}\end{array}$}\ar@{-}[ul]&&&&&&&&&&&&&&&&&&&\\
\mbox{$\begin{array}{c}x_{04},x_{06},\\x_{6\bar{6}}\end{array}$}\ar@{-}[ddr]\ar@{-}[u]\ar@{-}[dr]&&&\mbox{$\begin{array}{c}x_{2\bar{6}},\\x_{26},\\x_{6\bar{6}}\end{array}$}\ar@{-}[rr]\ar@{-}[dll]&&\mbox{$\begin{array}{c}x_{2\bar{6}},\\x_{26},\\x_{2\bar{2}}\end{array}$}\ar@{-}[drr]&&&\mbox{$\begin{array}{c}x_{0\bar{4}},x_{0\bar{2}},\\x_{2\bar{2}}\end{array}$}\ar@{-}[ddl]\ar@{-}[u]\ar@{-}[dl]&&&&&&&&&&&&&&&&&&&&&&&&&&&&&&&&&&&&&\\
&\mbox{$\begin{array}{c}x_{06},x_{26},\\x_{6\bar{6}}\end{array}$}\ar@{-}[ddrr]&&&\mbox{$\begin{array}{c}x_{04},\\x_{0\bar{4}},\\x_{0\bar{0}}\end{array}$}\ar@{.}[dlll]\ar@{.}[drrr]&&&\mbox{$\begin{array}{c}x_{26},x_{0\bar{2}},\\x_{2\bar{2}}\end{array}$}\ar@{-}[ddll]&&&&&&&&&&&&&&&&&&&&&&&\\
&\mbox{$\begin{array}{c}x_{04},x_{06},\\x_{0\bar{0}}\end{array}$}\ar@{-}[rrd]&&&&&&\mbox{$\begin{array}{c}x_{0\bar{4}},x_{0\bar{2}},\\x_{0\bar{0}}\end{array}$}&&&&&&&&&&&&&&
\\
&&&\mbox{$\begin{array}{c}x_{26},x_{06},\\x_{0\bar{0}}\end{array}$}\ar@{-}[rr]&&\mbox{$\begin{array}{c}x_{26},x_{0\bar{2}},\\x_{0\bar{0}}\end{array}$}\ar@{-}[rru]&&&&&&&&&&&&&&&&&&&
}$%}
\caption{The clusters in type $C_3$}\label{ex:c3}
 \end{figure}

%\bigskip
%\bigskip
%\bigskip
\begin{figure}[!ht]
\centering
\includegraphics[bb=80 10 500 430, width=8.2cm, height = 8.2cm, scale = 1]{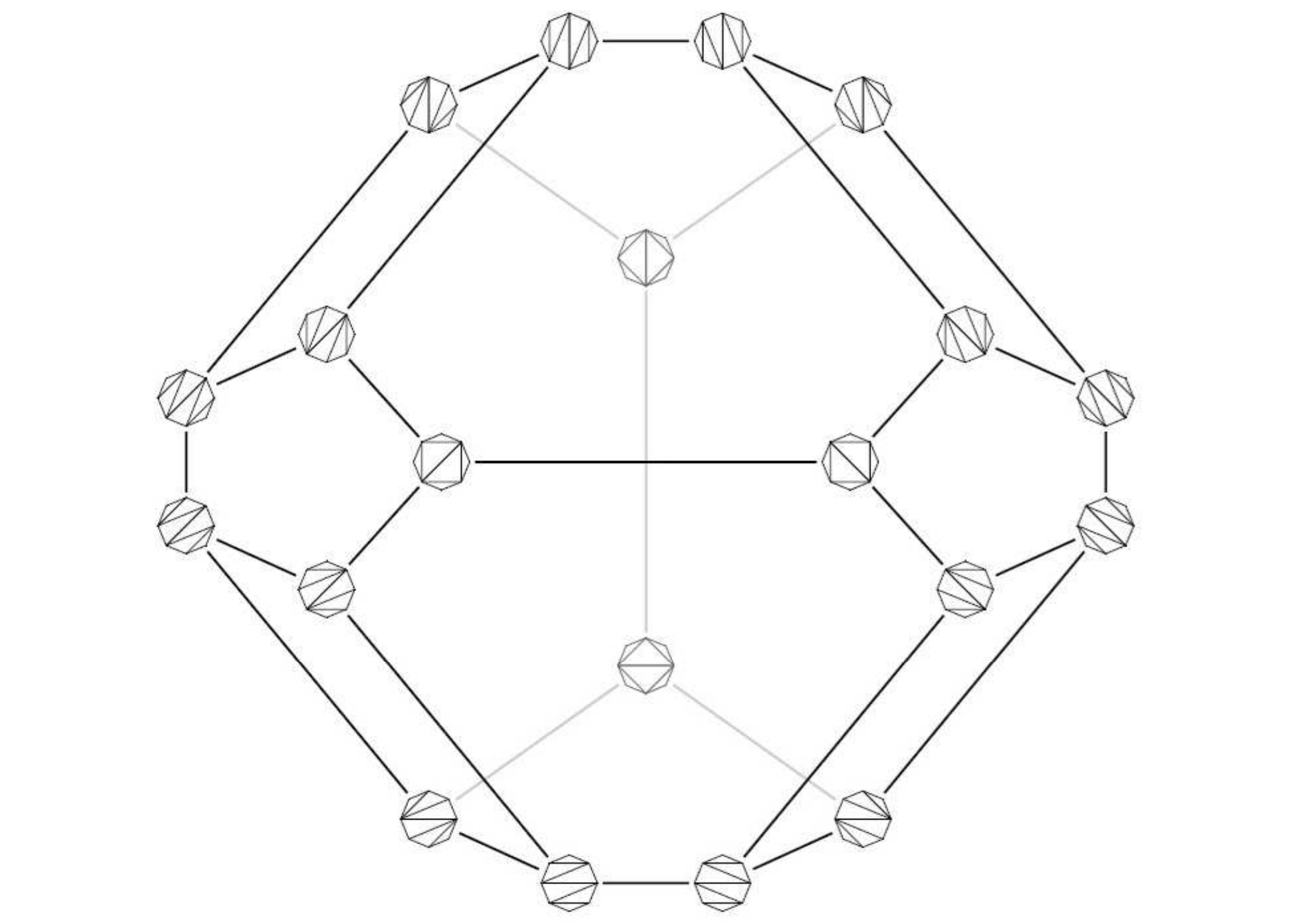}
\caption{3-dimensional cyclohedron: flips of the regular octagon (\cite[Figure  3.9]{FR2})}
\label{fig:cyclo3d}\end{figure}

\end{example}

\begin{definition}\label{dCn}
Let $\mathbb{P}=\mathrm{Trop}(\lambda)$. For an integer $n\in\mathbb{N},\:n\geq 2$, we denote by $\overline{\mathcal{A}}_n=\mathcal{A}(\mathbf{x},\{\theta_1^0,\dots,\theta_n^0\},B)$  the generalised cluster algebra defined by the initial seed % $\mathcal{A}_n$ of type $C_n$, with %$\mathbb{P}=\mathrm{Trop}(\lambda)$ and exchange polynomials 
\begin{equation}\theta_i^0(u,v)=u+v\:\:(i\in\llbracket 1,n-1\rrbracket), \qquad \theta_n^0(u,v)=u^2+\lambda uv+v^2,\end{equation}
and
\begin{equation}B:=\left(  \begin{array}{ccccccc}
0&1&0&0&0&\dots&0\\
-1&0&1&0&0&\dots&0\\
0&-1&0&1&0&\dots&0\\
0&0&-1&0&\ddots&\ddots&\vdots\\
\vdots&\vdots&\ddots&\ddots&\ddots&1&0\\
0&0&\dots&0&-1&0&2\\
0&0&0&\dots&0&-1&0
\end{array} \right).\end{equation}

\end{definition}

Thus $\overline{\mathcal{A}}_n$ is the $\mathbb Z\lbrack \lambda^{\pm 1}\rbrack$-subalgebra of $\mathcal F$ generated by the cluster variables. We will rather work with a variant of $\overline{\mathcal{A}}_n$, in which the coefficient $\lambda$ is not assumed to be invertible.

\begin{definition}\label{dAn}
Let $\mathcal A_n$ be the $\mathbb Z\lbrack\lambda\rbrack$-subalgebra of $\mathcal F$ generated by the cluster variables of $\overline{\mathcal{A}}_n$.
\end{definition}

As above, we label the cluster variables of $\overline{\mathcal{A}}_n$ (or $\mathcal{A}_n$) by $\Theta$-orbits of diagonals of $\mathbf{P}_{2n+2}$. The initial cluster variables corresponding to the initial seed are as follows (see Figure \ref{fig:initclu}):
\begin{equation}x_k:=x_{\overline{2n},2k}\quad(k\in\llbracket 1,n\rrbracket).\end{equation}

\bigskip
\bigskip
\bigskip
\bigskip
\begin{figure}[!ht]
\centering
\includegraphics[bb=150 15 400 265, width=3.5cm, height = 3.5cm, scale = 1]{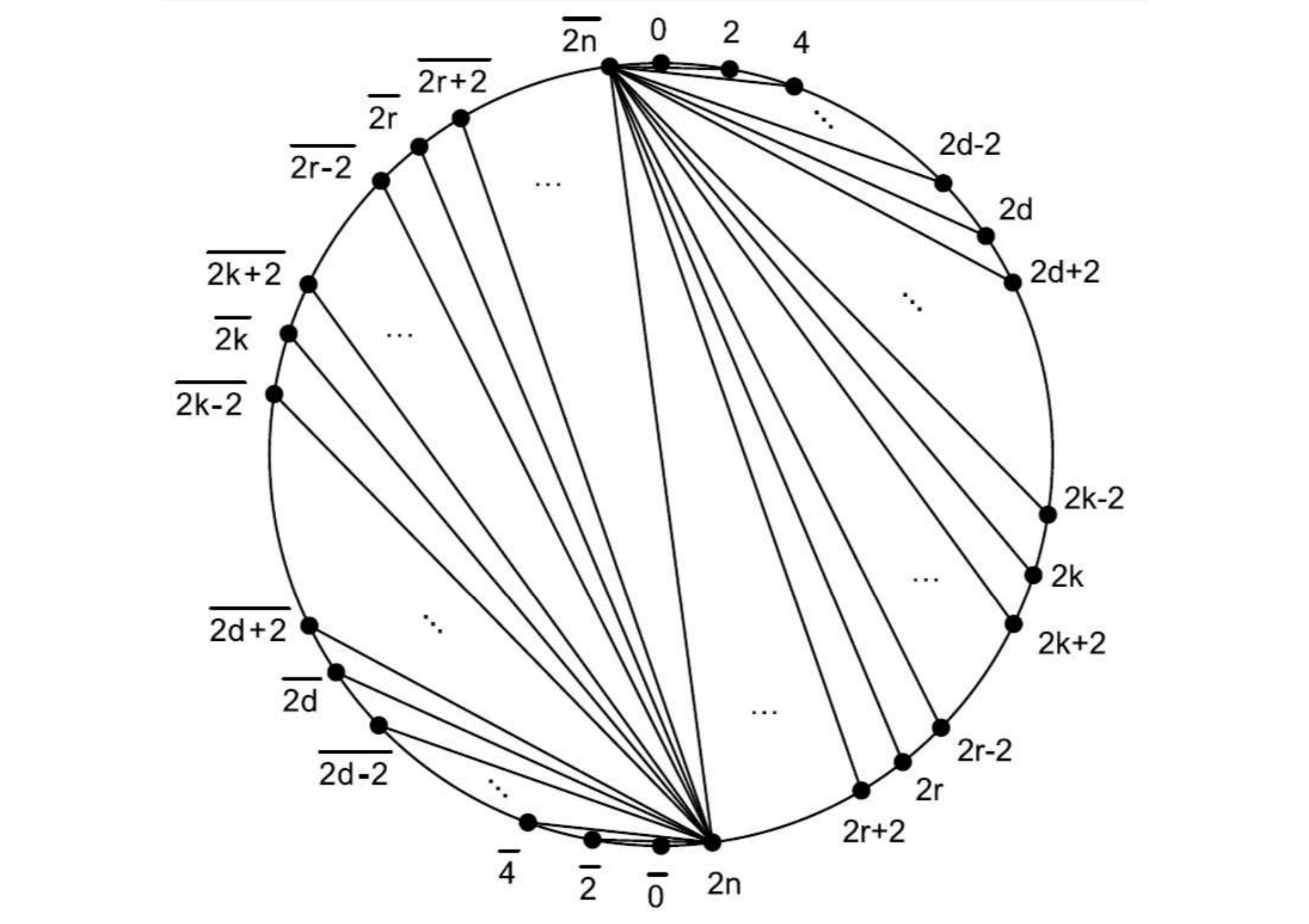}
\caption{The initial cluster of $\mathcal{A}_n$}
\label{fig:initclu}\end{figure}

\begin{remark}
\begin{enumerate}
\item It follows from Definition \ref{genmut}, that the exchange polynomials $\theta_i^0, \:i<n$, are not affected by mutation; moreover, they coincide with standard exchange relations.
\item There is exactly one variable of the form $x_{a,\bar{a}}$ in each cluster. Indeed, as noted above, any centrally symmetric triangulation contains exactly one diameter. %More precisely, recall that our clusters correspond to centrally symmetric  triangulations of $\mathbf{P}_{2n+2}$. We have noted above that any such triangulation contains exactly one diameter. 
%We can then conclude that every cluster contains exactly one variable of the form $x_{a,\overline{a}}$. 
Moreover, the exchange polynomial $\theta_n^0$ also remains unaffected by mutation. 
Therefore, mutating $x_{a,\overline{a}}$ will yield a variable of the form $x_{b,\overline{b}}$. 
This can also be seen in terms of triangulations: flipping a diameter while keeping a non-crossing, centrally-symmetric triangulation of $\mathbf{P}_{2n+2}$, gives another diameter (it is easy to see that the quadrilaterals in which diameters are flipped, are always rectangles). 
\end{enumerate}
\end{remark}

\begin{proposition}\label{pCn}
In the generalised cluster algebra $\mathcal{A}_n$, the following exchange relations between variables  $x_{ab}$ and $x_{cd}$ hold, up to rotation (i.e. index shifting):
\begin{enumerate}
\item If $a\neq\bar{b}$, $c\neq \bar{d}$, and the quadrilateron $\lbrack acbd\rbrack$ is contained in one half of the circle $\mathscr{C}$, we have a standard exchange relation of the form
\begin{equation}\label{gexc1}
x_{\overline{2n},2k+2} x_{2d-2,2r+2}= x_{\overline{2n},2r+2} x_{2d-2,2k+2} + x_{\overline{2n},2d-2} x_{2k+2,2r+2},
\end{equation}
which corresponds to the Ptolemy rule in the first diagram of Figure \ref{fig:diagex}.
\item if $a=\bar{b}$ and $c=\bar{d}$, we have a generalised exchange relation of the form
\begin{equation}\label{gexc2}
x_{\overline{2n},2n}x_{\overline{2k},2k} = x_{\overline{2n},2k}^2+x_{2k,2n}^2+\lambda x_{\overline{2n},2k}x_{2n,2k}.
\end{equation}
The monomials with coefficient 1 correspond to the Ptolemy rule in the second diagram of Figure \nolinebreak\ref{fig:diagex}.
\end{enumerate}

\end{proposition}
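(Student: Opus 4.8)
The plan is to exploit the dictionary between generalised mutations and flips of centrally symmetric triangulations of $\mathbf{P}_{2n+2}$ furnished by Theorem \ref{geneCla}, together with the observation (recorded in the Remark following Definition \ref{dAn}) that the special choice of coefficients makes every exchange polynomial invariant under mutation. Because of this invariance, in an \emph{arbitrary} seed the mutation flipping a pair of non-diameter diagonals always carries $\theta_i^0(u,v)=u+v$, whereas the mutation flipping the unique diameter always carries $\theta_n^0(u,v)=u^2+\lambda uv+v^2$ with $d_n=2$; indeed the direction bearing $\theta_n^0$ is precisely the one whose active cluster variable is the diameter present in that seed. Since rotations of $\mathbf{P}_{2n+2}$ preserve the flip structure and send the diameter direction to itself (fixing the single coefficient $\lambda$), they induce strong isomorphisms of $\mathcal{A}_n$ onto itself; this justifies proving each relation for one convenient representative and invoking ``up to rotation'' for the rest. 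It therefore suffices to read off the two mutation types directly from Definition \ref{genmut}.

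For part (1), I would position the configuration so that the flipped centrally symmetric pair corresponds to one of the directions $i<n$, all of which carry $\theta_i^0(u,v)=u+v$. The hypothesis that the quadrilateral $[acbd]$ lies in one half of $\mathscr{C}$ guarantees that it is disjoint from its image under $\Theta$, so flipping the centrally symmetric pair decouples into two independent ordinary flips. The cluster mutation rule then reduces to $x_k x_k'=u_k^+ + u_k^-$, and computing $u_k^\pm=\prod_j x_j^{[\pm\beta_{jk}]_+}$ from the exchange matrix exhibits $u_k^+$ and $u_k^-$ as the two products of opposite sides of the quadrilateral. Using the identifications $x_{ab}=x_{\bar a\bar b}$ coming from central symmetry, this is exactly the Ptolemy relation (\ref{gexc1}).

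For part (2), the two diameters $[\overline{2n},2n]$ and $[\overline{2k},2k]$ are exchanged by the mutation in the diameter direction $n$, which carries $\theta_n^0(u,v)=u^2+\lambda uv+v^2$. Their four endpoints $\overline{2n},2k,2n,\overline{2k}$ form a rectangle, whose opposite sides are identified by $\Theta$: namely $x_{\overline{2n},2k}=x_{2n,\overline{2k}}$ and $x_{2k,2n}=x_{\overline{2k},\overline{2n}}$. Computing $u_n^\pm$ from the $n$-th column of $B$ yields $u_n^+=x_{\overline{2n},2k}$ and $u_n^-=x_{2k,2n}$, so that $x_n x_n'=\theta_n^0(u_n^+,u_n^-)=(u_n^+)^2+\lambda u_n^+u_n^-+(u_n^-)^2$ reproduces the three terms of (\ref{gexc2}); the two squares are precisely the products of opposite (equal) sides of the rectangle, while the middle term $\lambda x_{\overline{2n},2k}x_{2n,2k}$ is the new contribution of the generalised polynomial.

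The main obstacle I expect is the bookkeeping in the computation of $u_n^\pm$. The exchange polynomials are mutation-invariant, so I need not follow the coefficient mutations; but the matrix entries $b_{jk}$ do change, and I must track the exponents $\beta_{jn}=b_{jn}/d_n$ through the sequence of mutations leading to the seed in which the relevant diameter is the active variable, then verify that $[\beta_{jn}]_+$ and $[-\beta_{jn}]_+$ single out exactly the rectangle's sides rather than spurious diagonals. Confirming that this recovers the correct boundary data in every relevant seed is where the centrally symmetric combinatorics of the cyclohedron must be used with care.
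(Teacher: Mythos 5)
Your outline follows essentially the same route as the paper's proof: both arguments rest on the mutation-invariance of all exchange polynomials $\theta_i^0$ (so that only the monomials $u_k^\pm$ evolve, via the standard matrix mutation), both reduce part (1) to the standard type $C$ situation of \cite{FZ2} and \cite{CFZ}, and both obtain part (2) by applying $\mu_n$ in a seed whose diameter is $\lbrack\overline{2n},2n\rbrack$. The one step you explicitly defer --- tracking the exponents $\beta_{jn}$ through a mutation sequence and verifying that $u_n^\pm$ pick out exactly the sides of the rectangle --- is precisely where the paper does concrete work, and it does so not by analysing an arbitrary seed but by exhibiting one explicit seed for each $k$: applying $\mu_{n-1}\mu_{n-2}\cdots\mu_{k+1}$ to the initial cluster yields $(x_{\overline{2n},2},\dots,x_{\overline{2n},2k},\,x_{2k,2k+4},\dots,x_{2k,2n},\,x_{\overline{2n},2n})$, in which the standard matrix mutation rule shows that the $n$-th column of the mutated exchange matrix is supported on the positions of $x_{\overline{2n},2k}$ and $x_{2k,2n}$, so that $u_n^+=x_{\overline{2n},2k}$, $u_n^-=x_{2k,2n}$, and $\theta_n^0(u_n^+,u_n^-)$ gives \eqref{gexc2} directly (the base case $k=n-1$ being relation \eqref{rellambda} in the initial seed). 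Your geometric justification that the mutated diameter is $x_{\overline{2k},2k}$ (unique diameter per centrally symmetric triangulation, flips of diameters occur in rectangles) matches the Remark the paper invokes, and your appeal to rotations to justify ``up to rotation'' is what the paper uses implicitly. In short, your proposal is correct in structure and coincides with the paper's argument; the only missing ingredient is this explicit mutation chain, which replaces your unexecuted bookkeeping and is the actual content of the paper's verification of \eqref{gexc2}.
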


%\medskip
%\medskip
%\medskip
%\medskip
\bigskip
\bigskip
\bigskip
\begin{figure}[!ht]
\centering
\includegraphics[bb=230 00 530 300, width=4cm, height = 4cm, scale = 1]{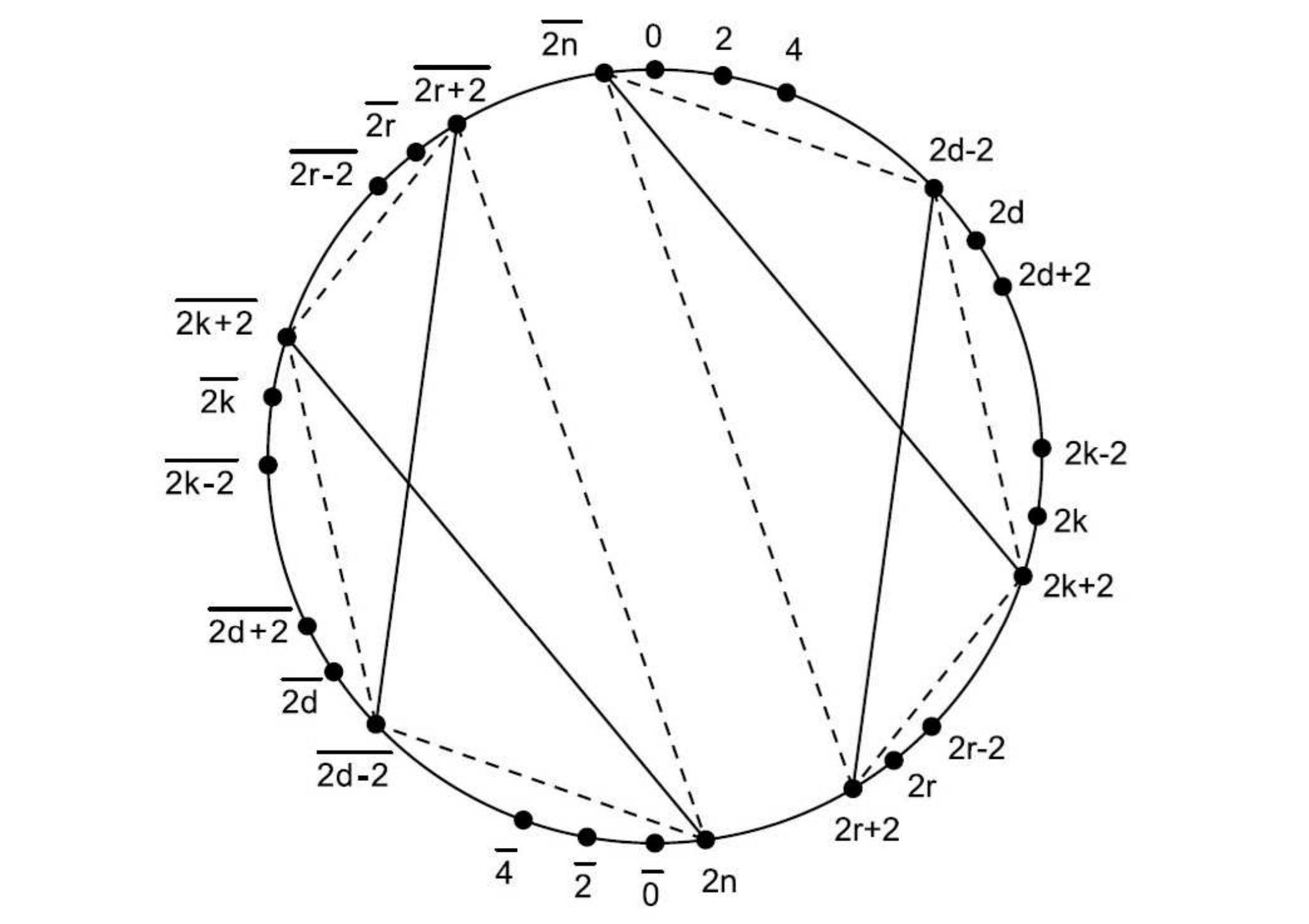}
\includegraphics[bb=00 00 295 295, width=4cm, height = 4cm, scale = 1]{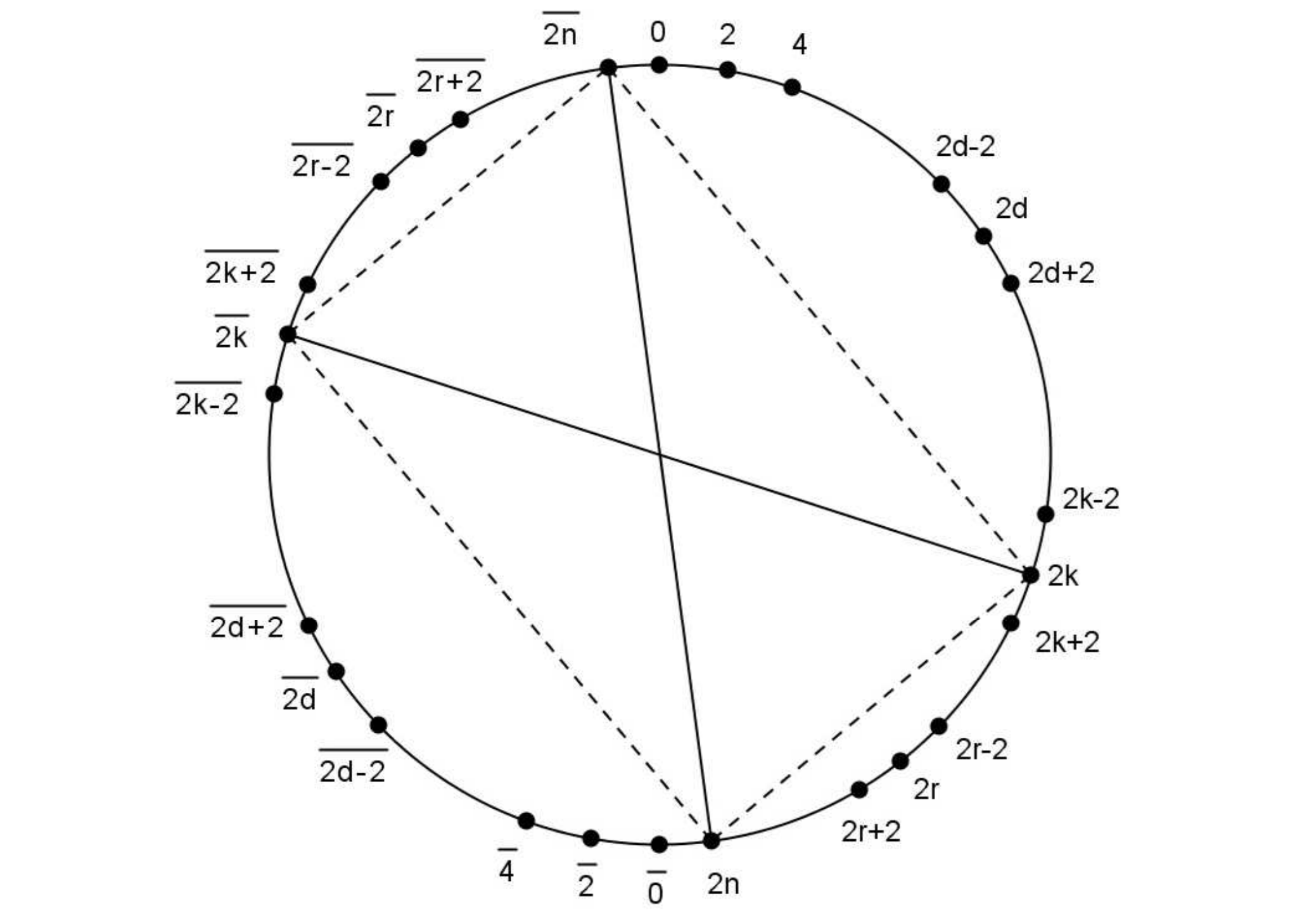}
\caption{Generalised exchange relations for $\mathcal{A}_n$}
\label{fig:diagex}\end{figure}

\begin{proof}
The identity (\ref{gexc1}), for $d=r=k+1$, is by definition true in the initial seed. Moreover, the exchange polynomials $\theta_1^0,\dots,\theta_{n-1}^0$ are exactly the ones that appear in a standard cluster algebra of type $C$, and they are unaffected by mutation: indeed, only the monomials $u_k^\pm$ change, in accordance with the mutations of the exchange matrix. Therefore, cluster variables that do not correspond to diameters behave the same way as in a standard cluster algebra of type $C$, as described in \cite{FZ2}. The proof for the first case is thus similar to those found in \cite{CFZ} and \cite{FZ2}.

The second equation (\ref{gexc2}), for $k=n-1$, is also by definition true in the initial seed. 
 Since every cluster contains exactly one variable of the form $x_{a,\overline{a}}$, and any mutation of a variable $x_{a,\overline{a}}$ yields a variable corresponding to another diameter, we can deduce from the initial cluster that all variables $x_{a,\overline{a}}$ are linked by a mutation in direction $n$. 
 In the initial cluster $(x_{\overline{2n},2k},\:k\in\llbracket 1,n\rrbracket  )$, we have
\begin{equation}\label{rellambda}x_{\overline{2n},2n}x_{\overline{2n-2},2n-2}=x_{\overline{2n},2n-2}^2 +\lambda x_{\overline{2n},2n-2} +1.\end{equation}
The general relation (\ref{gexc2}) can be obtained directly in the following cluster (see Figure \ref{fig:mutclu}):
\begin{equation}\begin{array}{l}
\mu_{n-1}\mu_{n-2}\dots \mu_{k+1}(x_{\overline{2n},2k},\:k\in\llbracket 1,n\rrbracket)\\\qquad\qquad\quad = (x_{\overline{2n},2},x_{\overline{2n},4},\dots,x_{\overline{2n},2k}, x_{2k,2k+4},x_{2k,2k+6},\dots,x_{2k,2n},x_{\overline{2n},2n}),\end{array}\end{equation}
where performing the mutation $\mu_n$ maps $x_{\overline{2n},2n}$ to $x_{\overline{2k},2k}$, and $\theta_n^0$ gives (\ref{gexc2}).

Indeed, recall that   $\theta_n^0$ is unaffected by mutation, so that in order to understand $\mu_n$, it is enough to know how the matrix $B$ mutates, namely in the standard way (Definition 2). This determines the variables $x_{ab}$ appearing in the monomials $u_n^+$ and $u_n^-$ in the mutated cluster above, thus yielding (\ref{gexc2}).
\hfill $\Box$

\bigskip
\bigskip
\bigskip
\bigskip
\begin{figure}[!ht]
\centering
\includegraphics[bb=140 -20 440 280, width=4cm, height = 4cm, scale = 1]{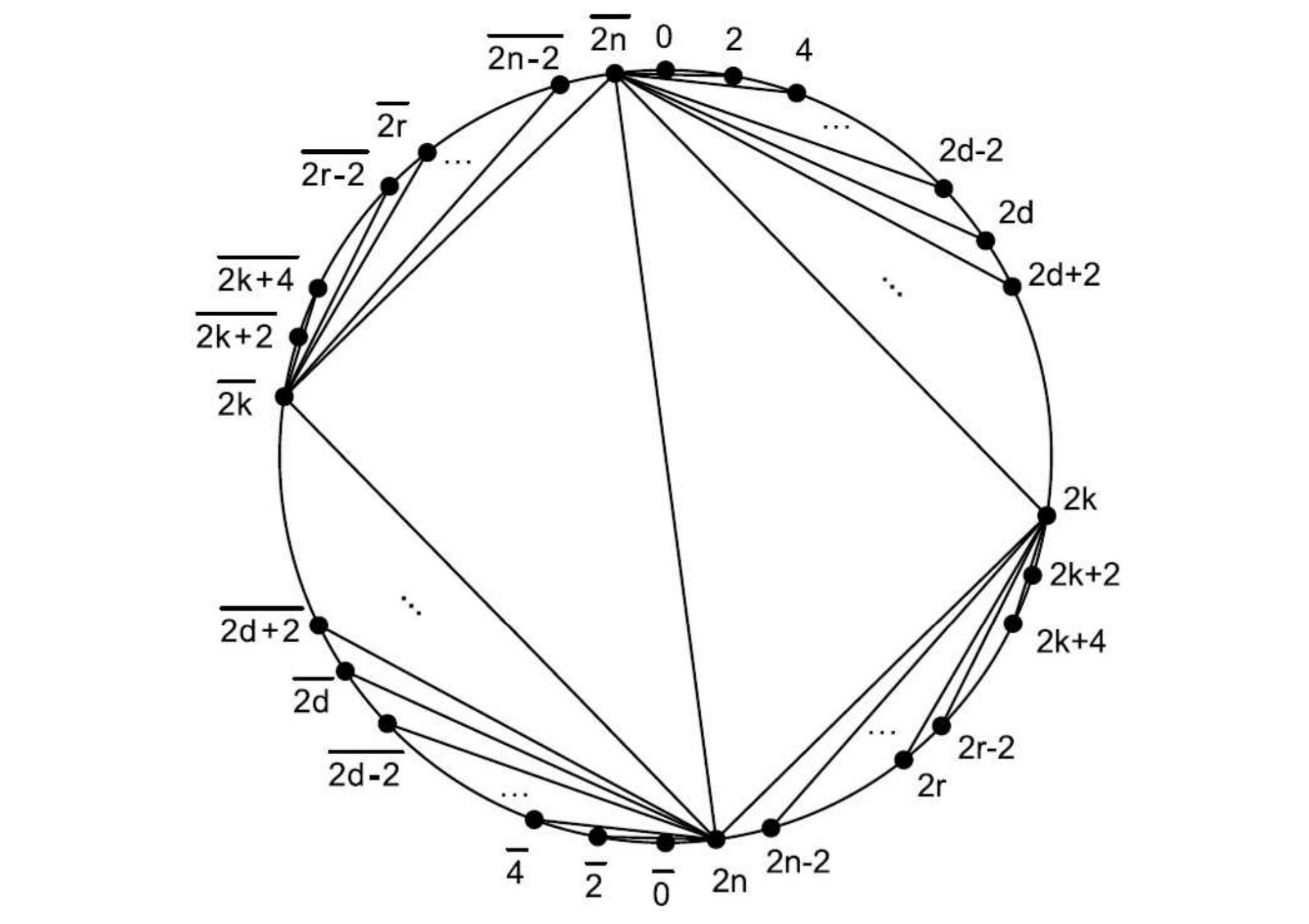}
\caption{The  mutated cluster $\mu_{n-1}\mu_{n-2}\dots \mu_{k+1}(x_{\overline{2n},2k},\:k\in\llbracket 1,n\rrbracket)$}
\label{fig:mutclu}\end{figure}

\end{proof}

\begin{remark}
Exchange relations do not cover every possibility for multiplication of cluster variables that are not in the same cluster. For example, we also have the following useful identity for multiplying a diagonal by a diameter: %There are two more cases (the proof is essentially the same as above):
%\begin{enumerate}
%\item If $a\neq \bar{b}$, $c\neq \bar{d}$ and each diagonal crosses two parallel diagonals (Figure \ref{fig:diagnex}, left), relations are of the form
%\begin{equation}\label{rel1}
%x_{\overline{2n},2r}x_{2d,\overline{2k}}=\lambda x_{2d,2r}x_{\overline{2n},2d}+x_{\overline{2n},2d}x_{2r,\overline{2k}}+x_{2d,2r} x_{\overline{2k},\overline{2n}}.
%\end{equation}
%\item 
 if $a\neq \bar{b}$ and $c=\bar{d}$ (Figure \ref{fig:diagnex}), relations are of the form
\begin{equation}\label{rel2}
\begin{array}{ll} 
x_{\overline{2n},2k}x_{2d,\overline{2d}}=\lambda x_{\overline{2n},2d}x_{2d,2k} +   x_{\overline{2n},2d}x_{2k,\overline{2d}}+ x_{2d,2k}x_{\overline{2d},\overline{2n}}.
\end{array}
\end{equation}
%\end{enumerate}
%The second case can be seen as a limit of the first one, where the second pair of parallel diagonals corresponding to $x_{cd}$ is actually one diameter.

%Note that this formula corresponds to the Ptolemy rule (see e.g. \cite{FZ2}) in an appropriate quadrilateron, plus the part where $\lambda$ appears if the quadrilateron cannot be inscribed inside one half of the circle $\mathscr{C}$. 

\end{remark}

\bigskip
\bigskip
\bigskip

\begin{figure}[!ht]
\centering
\includegraphics[bb=120 00 420 300, width=4cm, height = 4cm, scale = 1]{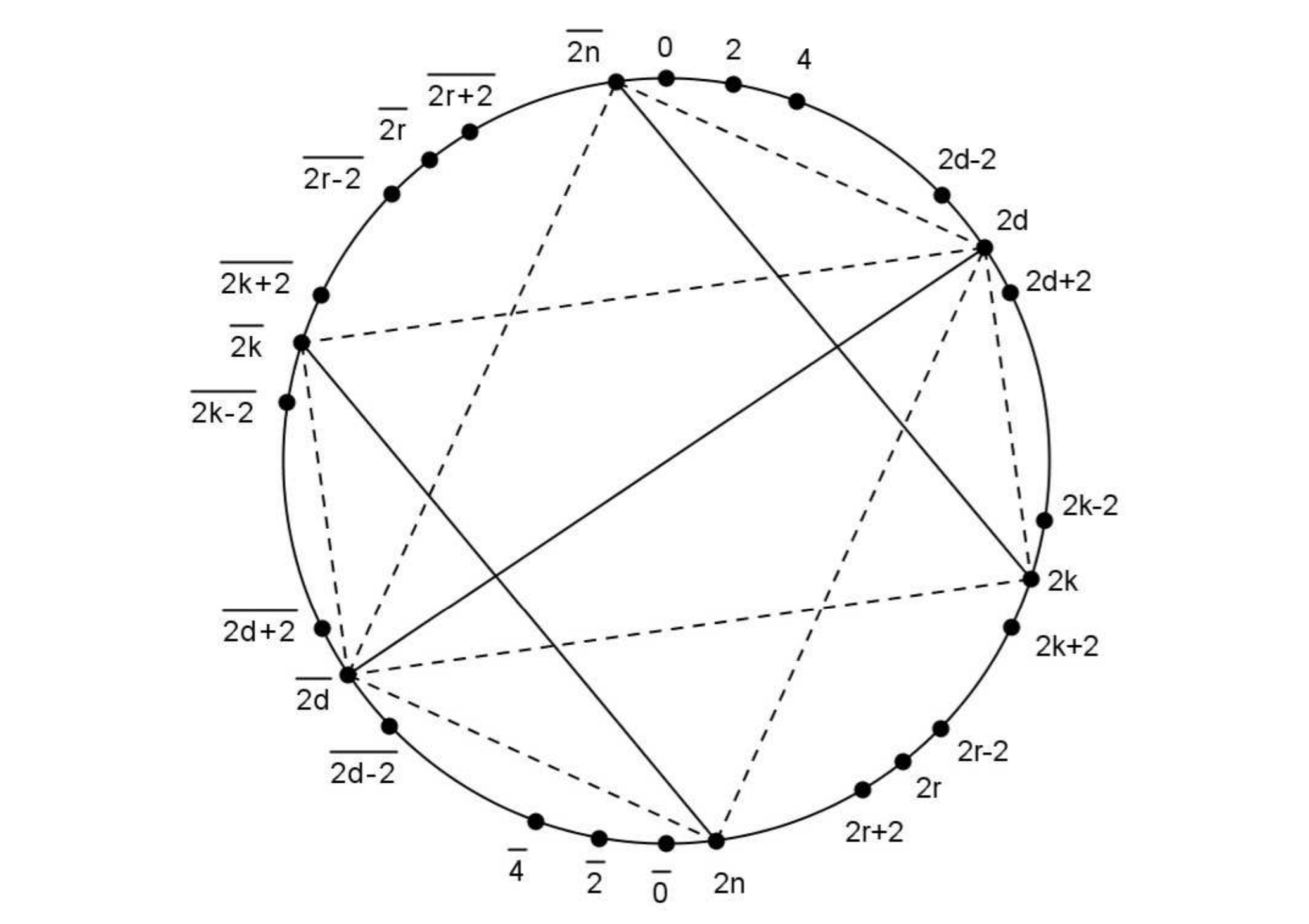}
\caption{Another relation for $\mathcal{A}_n$}
\label{fig:diagnex}\end{figure}

%We know from \cite{CS} that the cluster monomials in a generalised cluster algebra share some combinatorial properties with cluster monomials  in a standard cluster algebra: the finite type classification and the cyclohedric organisation in type $C$ are the same, and the only difference in the mutation process resides in the polynomials $\theta_i$. 

\subsubsection{Bases}\label{s232}

We call a pair of centrally symmetric diagonals of $\mathbf{P}_{2n+2}$ \emph{small} if 
the corresponding set $\mathcal{O}_{ab}$ contains only one element. The attached variables $x_{ab}$ are also called \emph{small}. Thus in type $C_3$, there are four small variables:
\begin{equation}
x_{04}, x_{26}, x_{4\bar{0}}, x_{6\bar{2}}.
\end{equation}

\begin{proposition}\label{genbasisA}
The set $\mathcal{S}$ of all monomials in the small variables  forms a $\mathbb{Z}$-basis of $\mathcal{A}_n$. Equivalently, $\mathcal A_n$ is the polynomial ring with coefficients in $\mathbb Z$ in the small variables.
\end{proposition}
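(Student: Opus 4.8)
The plan is to prove two things that together give the result: that the small variables generate $\mathcal{A}_n$ as a $\mathbb{Z}$-algebra (so the monomials $\mathcal{S}$ span), and that they are algebraically independent over $\mathbb{Q}$ (so $\mathcal{S}$ is $\mathbb{Z}$-linearly independent). Write $s_0,\dots,s_n$ for the $n+1$ small variables, i.e. the $\Theta$-orbits of short diagonals of $\mathbf{P}_{2n+2}$; these are cyclically permuted by rotation. Since each $s_i$ is a cluster variable, the inclusion $\mathbb{Z}[s_0,\dots,s_n]\subseteq\mathcal{A}_n$ is clear, and the whole content lies in the reverse inclusion together with the independence.

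For generation I would first express every cluster variable $x_{ab}$ as a polynomial in the $s_i$, by induction on $|\mathcal{O}_{ab}|$. The cases $|\mathcal{O}_{ab}|\in\{0,1\}$ are the neighbour convention $x_{ab}=1$ and the small variables themselves. For the inductive step, let $a=v_0,v_1,\dots,v_m=b$ be the vertices on the shorter arc cut off by $[a,b]$, so that the short diagonals $e_k:=x_{v_{k-1}v_{k+1}}$ $(1\le k\le m-1)$ along this arc are small variables, pairwise inequivalent under $\Theta$ because the arc is at most a semicircle. Setting $d_k:=x_{v_0v_k}$, the Ptolemy relation (\ref{gexc1}) for the quadrilateral $\{v_0,v_{k-1},v_k,v_{k+1}\}$ (contained in one half of $\mathscr{C}$) reads $d_ke_k=d_{k-1}+d_{k+1}$, i.e. the continuant recursion $d_{k+1}=e_kd_k-d_{k-1}$ with $d_0=d_1=1$. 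Hence $x_{ab}=d_m$ is the continuant polynomial in the consecutive small variables $e_1,\dots,e_{m-1}$; this includes the diameters (the case $m=n+1$), whose final reduction still uses only the standard relation (\ref{gexc1}), since the relevant quadrilateral sits in a closed semicircle, and never the generalised relation (\ref{gexc2}).

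It remains to express $\lambda$, which is the crux. I would read it off from identity (\ref{rel2}) in the special configuration where the coefficient of $\lambda$ is a unit: taking (up to rotation) the small variable $x_{\overline{2n},2}$, whose skipped vertex is $0$, together with the diameter $x_{0,\overline 0}$, both factors $x_{\overline{2n},0}$ and $x_{0,2}$ multiplying $\lambda$ in (\ref{rel2}) are sides of $\mathbf{P}_{2n+2}$, hence equal to $1$. Relation (\ref{rel2}) then degenerates to
\[
x_{\overline{2n},2}\,x_{0,\overline 0}=\lambda+x_{2,\overline 0}+x_{\overline 0,\overline{2n}},
\]
so $\lambda=x_{\overline{2n},2}\,x_{0,\overline 0}-x_{2,\overline 0}-x_{\overline 0,\overline{2n}}$. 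Every term on the right is a diagonal or a diameter, hence a polynomial in the $s_i$ by the previous paragraph, and no division is required; thus $\lambda\in\mathbb{Z}[s_0,\dots,s_n]$. (For $n=2$ this gives $\lambda=s_0s_1s_2-s_0-s_1-s_2$, the trace of the $SL_2$-monodromy $\prod_i\left(\begin{smallmatrix}s_i&-1\\1&0\end{smallmatrix}\right)$ around $\mathscr{C}$; this closed form is the conceptual reason why $\lambda$ is polynomial, but (\ref{rel2}) suffices.) As $\mathcal{A}_n$ is generated over $\mathbb{Z}$ by $\lambda$ and the cluster variables, this yields $\mathcal{A}_n\subseteq\mathbb{Z}[s_0,\dots,s_n]$, hence equality.

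For independence I would argue by transcendence degree, which is soft once generation is in hand. The initial cluster $x_1,\dots,x_n$ is algebraically independent over $\mathrm{Frac}\,\mathbb{ZP}=\mathbb{Q}(\lambda)$ and generates $\mathcal{F}$ over it, so $\{\lambda,x_1,\dots,x_n\}$ is a transcendence basis of $\mathcal{F}$ over $\mathbb{Q}$ with $\mathcal{F}=\mathbb{Q}(\lambda,x_1,\dots,x_n)$. By the generation step each of $\lambda,x_1,\dots,x_n$ lies in $\mathbb{Q}(s_0,\dots,s_n)$, so $\mathcal{F}=\mathbb{Q}(s_0,\dots,s_n)$; being generated by the $n+1$ elements $s_i$ and having transcendence degree $n+1$, the $s_i$ must be algebraically independent over $\mathbb{Q}$. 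The monomials $\mathcal{S}$ are therefore $\mathbb{Z}$-linearly independent, and as they span $\mathcal{A}_n=\mathbb{Z}[s_0,\dots,s_n]$ they form a $\mathbb{Z}$-basis. The main obstacle is thus the generation step, and within it the bookkeeping that keeps the reductions inside the polynomial ring: one must verify that the short diagonals produced along each arc really are distinct small variables (here the semicircle bound is essential — a continuant pushed past a full semicircle would repeat a $\Theta$-orbit and enter the regime of the generalised relation (\ref{gexc2})), and that the chosen instance of (\ref{rel2}) genuinely has unit $\lambda$-coefficient, so that $\lambda$ emerges without dividing by a non-unit. It is worth isolating as a lemma that each diameter has continuant expressions from both bounding semicircles, their equality being exactly what (\ref{gexc2}) encodes; cross-checking the (\ref{rel2})-value of $\lambda$ against (\ref{gexc2}) confirms consistency.
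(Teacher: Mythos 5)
Your proposal is correct, and its generation half is essentially the paper's: your continuant recursion $d_{k+1}=e_k d_k-d_{k-1}$ is exactly the paper's induction on $\mathrm{Card}\:\mathcal{O}_{ab}$ via \eqref{gexc1} (including the diameter case, where the quadrilateral fills a closed semicircle), and your degenerate instance of \eqref{rel2}, with the two unit sides forcing the coefficient of $\lambda$ to be $1$, is precisely the paper's identity \eqref{lambda1} up to rotation. Where you genuinely diverge is the independence half. The paper proves freeness of $\mathcal{S}$ by a much heavier mechanism: it first shows that the set $\mathcal{M}$ of cluster monomials times powers of $\lambda$ is free over $\mathbb{Z}$, by specialising $\lambda=0$ to a standard cluster algebra of type $C_n$ where cluster monomials are known to be linearly independent, and then builds a degree function, compatible total orders, and a bijection $\Phi:\mathcal{M}\to\mathcal{S}$ making the transition matrix between $\mathcal{M}$ and $\mathcal{S}$ lower unitriangular. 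Your transcendence-degree count --- $\mathcal{F}=\mathbb{Q}(\lambda,x_1,\dots,x_n)=\mathbb{Q}(s_0,\dots,s_n)$ has transcendence degree $n+1$ over $\mathbb{Q}$, so the $n+1$ generators $s_i$ are algebraically independent --- is softer, shorter, and fully sufficient for the proposition as stated. What the paper's detour buys, and yours does not, is the byproduct invoked immediately after the proof: the unitriangularity simultaneously shows that $\mathcal{M}_0$ is a $\mathbb{Z}\lbrack\lambda\rbrack$-basis of $\mathcal{A}_n$, which is what makes $\mathcal{B}$ in \eqref{basisB} a $\mathbb{Z}$-basis and feeds directly into Theorem \ref{conjA1}; a reader following your route would still need that separate argument. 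One small quibble: your closing remark that the two continuant expressions of a diameter ``from both bounding semicircles'' encode \eqref{gexc2} is empty as stated, since by the convention $x_{ab}=x_{\bar{a}\bar{b}}$ the two arcs carry the same $\Theta$-orbits of short diagonals and the two continuants are literally the same polynomial; \eqref{gexc2} relates \emph{distinct} diameters. This does not affect your proof, and your sample value $\lambda=s_0s_1s_2-s_0-s_1-s_2$ for $n=2$ is correct.
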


\begin{proof}
We first prove that $\mathcal{S}$ spans $\mathcal{A}_n$ over $\mathbb{Z}$. Since $\mathcal{A}_n$ is generated by the elements $x_{ab}$, it is enough to show that each $x_{ab}$ is a polynomial in the small variables. We will argue by induction on $\mathrm{Card} \mathcal{O}_{ab}$.

%If we have $\mathrm{Card}\:\mathcal{O}_{ab}=2$, and $n \geq 2$, we may use the following relation (up to shifting of the indices):
%\begin{equation}x_{04}x_{26}=x_{06}+1,\end{equation}
%to express $x_{06}$ as a difference of elements in $\mathcal{S}$.

Let $k\geq 2$, and suppose that variables $x_{ab}$ such that $\mathrm{Card}\:\mathcal{O}_{ab}\leq k-1$  can be written as  $\mathbb{Z}$-linear combinations of elements of $\mathcal{S}$.

Let $x_{ab}=x_{2d,2d+2k+2}$ be a cluster variable wih $\mathrm{Card}\:\mathcal{O}_{ab}=k\leq n$. Applying Proposition \ref{pCn} (1) in the  quadrilateron $\lbrack 2d,2d+2k-2,2d+2k,2d+2k+2\rbrack$ yields
\begin{equation}
\begin{array}{ll} x_{2d,2d+2k+2}  &= x_{2d+2k-2,2d+2k+2}\, x_{2d,2d+2k}   - x_{2d,2d+2k-2},\end{array}
\end{equation}
and by induction, the right-hand side is a $\mathbb Z$-linear combination of elements of $\mathcal S$. %If $k=n$, applying Proposition \ref{pCn} (2) in the quadrilateron $\lbrack 2d, \overline{2d-4}, \overline{2d-2}, \overline{2d}\rbrack$ yields
%\begin{equation}
%x_{2d,2d+2n+2}=x_{2d,\overline{2d}}= x_{2d,\overline{2d-2}}x_{\overline{2d-4},\overline{2d}}-x_{2d,\overline{2d-4}},
%\end{equation}
%and by induction, the right-hand side is a $\mathbb Z\lbrack\lambda\rbrack$-linear combination of elements of $\mathcal S$.
%
%\medskip
%For the remaining cases (i.e. variables corresponding to diameters), the proof is similar; however, the coefficient $\lambda$ appears in the relations. Therefore, we get that monomials in the $x_{2r,2r+4}$ span $\mathcal{A}_n$ over $\mathbb{Z}\lbrack\lambda\rbrack$. 
%
 Moreover,   $\lambda$ itself is a polynomial in the $x_{2r,2r+4}$: indeed, by \eqref{rel2}, we have % it suffices to use the special case of formula (\ref{rel2}) where the left-hand side is $x_{\overline{2n},2d} x_{2d-2,\overline{0}}$, %:
\begin{equation}\label{lambda1} x_{\overline{2n},2n} x_{2n-2,\overline{0}} = \lambda + x_{\overline{0},\overline{2n}} + x_{\overline{2n},2n-2}.\end{equation}
%This allows us 
%to conclude that the elements of 
Therefore, $\mathcal{S}$ spans $\mathcal{A}_n$ over $\mathbb{Z}$.

Let $\mathcal M_0$ be the set of cluster monomials of $\mathcal A_n$, and let $\mathcal M$ be the set of cluster monomials multiplied by powers of $\lambda$. 

Note that we can specialise $\lambda$ to 0 in $\mathcal A_n$, and this gives a standard cluster algebra $A_n$ of type $C_n$. Moreover, in this specialisation, the set $\mathcal M_0$ becomes the set $M$ of cluster monomials in $A_n$, which is free over $\mathbb Z$. This implies that $\mathcal M_0$ is free over $\mathbb Z\lbrack \lambda\rbrack$. Indeed, 
%
% Let $\mathcal{M}$ be the set of cluster monomials of $\mathcal{A}_n$, multiplied by powers of $\lambda$, and let $\mathcal{M}_0$ be the subset of $\mathcal{M}$ that consists of monomials that do not contain powers of $\lambda$. The sets $\mathcal{S},\mathcal{M}$ and $\mathcal{M}_0$ have infinitely many elements, but we will sort them out, by their degree and by the number of nodes outside the corresponding pairs of diagonals.
%
%Note that there is a natural bijection between $\mathcal{M}_0$ and the set $M$ of cluster monomials of a standard cluster algebra $A$ of type $C_n$, without coefficients.
%
%Now we shall prove that $\mathcal{S}$ is a free family. In order to do this, we first show that $\mathcal{M}$ is  free over $\mathbb{Z}$. %, using a deformation argument.
% It suffices to show that the set $\mathcal{M}_0$ is free over $\mathbb{Z}\lbrack \lambda \rbrack$. 
%
if $\mathcal{M}_0$ were not free, there would be a non-trivial $\mathbb{Z}\lbrack \lambda \rbrack$-linear dependence relation between elements of $\mathcal M_0$, of the form
\begin{equation}\label{m0}
\displaystyle \sum_{t=0}^N P_t(\lambda)\cdot m_t=0,\quad P_t\in\mathbb{Z}\lbrack\lambda\rbrack,\: m_t\in\mathcal{M}_0\quad(t\in\llbracket 0,N\rrbracket).
\end{equation}
Dividing if necessary by a suitable power of $\lambda$, we may assume that at least one $P_t(\lambda)$ is not divisible by $\lambda$, i.e. $P_t(0)=a_t\neq 0$.
The relation (\ref{m0}) above 
%This relation 
would then specialise, for $\lambda=0$, into a non-trivial  $\mathbb{Z}$-linear dependence relation between the cluster monomials of $M$. %This is a contradiction, since we know that $M$ is a $\mathbb{Z}$-basis of $A$. 
Thus $\mathcal{M}_0$ is free over $\mathbb{Z}\lbrack \lambda\rbrack$, and therefore $\mathcal{M}$ is free over $\mathbb{Z}$.

To show that $\mathcal S$ is free over $\mathbb Z$, let us now prove that $\mathcal{M}$ and $\mathcal{S}$ can be linked by an infinite unitriangular matrix $U$. %, i.e. $\mathcal{M}= U\cdot \mathcal{S}$. 
%This will allow us to conclude on the linear independence of $\mathcal{S}$.

For a monomial $m\in\mathcal{M}$ of the form $m=\lambda^e \cdot\prod x_{ab}^{m_{ab}}$, define its degree
\begin{equation}\deg(m):=(n+1)\cdot e+\sum m_{ab}\cdot\mathrm{Card}\:\mathcal{O}_{ab}.\end{equation}
Choose a total order on $\mathcal{M}$ such that for any $m,m'\in\mathcal{M}$,
\begin{equation} \deg(m)<\deg(m')\Rightarrow m< m' .\end{equation}

Let $\Phi:\mathcal{M}\rightarrow\mathcal{S}$ be the map that sends a monomial $m=\lambda^e \cdot\prod x_{ab}^{m_{ab}}\in\mathcal{M}$ to the monomial
\begin{equation}\Phi(m):= \displaystyle\left( \prod_{\mathrm{Card}\mathcal{O}_{ab}=1} x_{ab}\right)^e \cdot \prod\left( \prod_{ 2k\in\mathcal{O}_{ab}} x_{2k-2,2k+2} \right)^{m_{ab}}\in\mathcal{S}.\end{equation}
We show that $\Phi$ is a bijection by constructing an inverse  map $\Psi:\mathcal{S}\rightarrow\mathcal{M}$.%, and showing that $\Psi$ is in fact the inverse of $\Psi$.

To a monomial $s=\displaystyle\prod_{k=0}^n x_{2k,2k+4}^{a_k}\in\mathcal{S}$, we attach the multiset $M(s)$ containing $a_k$ times the integer $2k+2$ for each $k=0,\dots,n$. A subset of $M(s)$ of the form
\begin{equation*}
\llbracket 2k,2\ell \rrbracket := \{2k,2k+2,\dots,2\ell-2,2\ell\}\quad(1\leq k\leq \ell\leq n+1)
\end{equation*}
is called a \emph{segment} of length $\ell-k+1$. Let $r$ be the number of distinct copies of $\llbracket 2,2n+2\rrbracket$ contained in $M(s)$, and let $M^{(1)}(s)$ be the multiset obtained from $M(s)$ by removing these $r$ maximal segments. Then it is an elementary combinatorial fact that $M^{(1)}(s)$ has a unique decomposition into a union of segments pairwise in \emph{generic position}. Here we say that two segments $\Sigma_1=\llbracket 2k_1,2\ell_1\rrbracket$ and $\Sigma_2=\llbracket 2k_2,2\ell_2\rrbracket$ are in generic position if the corresponding diagonals $(2k_1-2,2\ell_1+2)$ and $(2k_2-2,2\ell_2+2)$ do not intersect or are equal. Let $m_{ab}$ be 
 the number of copies of $\llbracket a+2,b-2\rrbracket$ in this decomposition. Then
\begin{equation*}
\Phi(s):=\lambda^r\displaystyle\prod x_{ab}^{m_{ab}}
\end{equation*}
is in $\mathcal M$ and $\Psi\circ\Phi(m)=m$, $\Phi\circ\Psi(s)=s$. We then order $\mathcal S$ by
\begin{equation*}
(s<s')\Leftrightarrow(\Phi(s)<\Psi(s')).
\end{equation*}

%. The vertices outside the corresponding small diagonals form a multiset
%\begin{equation}M(s):=\{ %\underset{a_k\:\mathrm{times}}{\underbrace{2k+2,\dots,2k+2}},\: k\in\llbracket 0,n\rrbracket\}.\end{equation}
%In order to define $\Psi(s)$, we need to find all the subsets of consecutive vertices in the above multiset, and write the corresponding variable $x_{ab}$ or $\lambda$. More precisely, if the $\Theta$-orbits of all the vertices $0,2,4,\dots,2n$ appear in $M(s)$, say $\displaystyle \min_{k=0}^n a_k= r>0$ times, write $\lambda^r$, and remove these vertices from $M(s)$. Then for each maximal subset of the form
%\begin{equation}\underset{m_{ab}\:\mathrm{times}}{\underbrace{a+2,\dots,a+2}}, \underset{m_{ab}\:\mathrm{times}}{\underbrace{a+4,\dots,a+4}},\dots, \underset{m_{ab}\:\mathrm{times}}{\underbrace{b-2,\dots,b-2}},\end{equation}
%write $x_{ab}^{m_{ab}}$, remove the involved vertices from the multiset, and repeat this process until only isolated (i.e. non-consecutive) vertices remain. Finally, for each isolated vertex $2k$, write $x_{2k-2,2k+2}$. This algorithm gives a uniquely determined monomial $\Psi(s)\in\mathcal{M}$. 

%Therefore it is quite obvious, from the definition of $\Psi$, that $\Phi\circ\Psi(s)=s$ and $\Psi\circ\Phi(m)=m$ for any monomials $s\in\mathcal{S}$ and $m\in\mathcal{M}$, thus $\Phi$ is indeed a bijection.

%The order on $\mathcal{S}$ is then defined by the bijection $\Phi$.

Let $U$ be the matrix $U=(u_{ms})_{m\in\mathcal{M},s\in\mathcal{S}}$ where the entries $u_{ms}\in\mathbb{Z}$ are defined by the infinite system of equations
\begin{equation}m= \displaystyle \sum_{s\in\mathcal{S}} u_{ms} s \quad(m\in\mathcal{M}).\end{equation}
The entries $u_{ms}$ are computed using the relations (\ref{gexc1}) to (\ref{rel2}) above. The rows and columns are ordered using the above total orders on $\mathcal{M}$ and $\mathcal{S}$.

By recursion on the degree, we are going to prove, with relations (\ref{gexc1})-(\ref{rel2}), that $U$ is lower unitriangular, that is, every monomial $m\in\mathcal{M}$ can be written as
\begin{equation}\label{monoMS}
m=\Phi(m) + \displaystyle\sum_{s<\Phi(m),\:s\in\mathcal{S}} u_{ms}\cdot s.
\end{equation}
First, if $m=x_{ab}$, we can deduce  from relation (\ref{gexc1}), that
\begin{equation}\label{monoxab}
x_{ab}=\Phi(x_{ab}) + \displaystyle \sum_{\deg(s)<\deg(x_{ab}),\:s\in\mathcal{S}} u_{x_{ab},s}\cdot s.
\end{equation}
Indeed, we can write $x_{ab}=x_{2k,2k+2d}$ for some $k,d\in\llbracket 0,n\rrbracket$. We have $x_{2k,2k+2}=1$ and $\Phi(x_{2k,2k+4})=x_{2k,2k+4}$. We also deduce from \eqref{gexc1} that
%
%If $d=1$, we are in the trivial case where $x_{ab}=1$.%
%
%If $d=2$, we already have $\Phi(x_{ab})=x_{ab}\in\mathcal{S}$.
%
%If $d=3$, we know from (\ref{gexc1}) that 
\begin{equation}x_{2k,2k+6}= x_{2k,2k+4}x_{2k+2,2k+6}-1=\Phi(x_{2k,2k+6})-1.\end{equation}
In general, suppose that the relation (\ref{monoxab}) holds, up to a certain degree $d-1<n$ of $x_{ab}.$ 
Then we use relation (\ref{gexc1}), up to shifting of the indices, to deduce that $x_{\overline{2n},2d}$ is equal to %:
%\begin{equation}\begin{array}{ll}
%x_{\overline{2n},2d} &= x_{\overline{2n},2d-4} x_{2d-6,2d}- %x_{\overline{2n},2d-6} x_{2d-4,2d}\\
%\\&=x_{\overline{2n},2}x_{04}x_{26}\dots x_{2d-8,2d-4}x_{2d-6,2d-2}x_{2d-4,2d}\\\\&\quad + \displaystyle\sum_{\deg s<d-2} u_{x_{\overline{2n},2d-4},s}\cdot s \cdot x_{2d-6,2d-2}x_{2d-4,2d} -   \Phi(x_{\overline{2n},2d-4})  \\\\&\quad-\displaystyle\sum_{\deg s<d-2} u_{x_{\overline{2n},2d-4},s}\cdot s   - \Phi(x_{\overline{2n},2d-6}) x_{2d-4,2d} \\\\&\quad- \displaystyle\sum_{\deg s<d-3} u_{x_{\overline{2n},2d-6},s}\cdot s\cdot x_{2d-4,2d}.    
%\end{array}\end{equation}
%This sum is equal to
 $\Phi(x_{\overline{2n},2d})$ plus some terms of degree $<d$. This can be seen for each term displayed above.
 Therefore, we obtain (\ref{monoxab}). 

The reasoning is similar for $\lambda$. It suffices to take the variables in a special case of (\ref{gexc2}), and replace them with the expressions obtained from (\ref{monoxab}), to get an expression of the form
\begin{equation}\label{monolambda}
\lambda = (x_{\overline{2n},2} x_{04}x_{26}\dots x_{2n-4,\overline{2n}}x_{2n-2,\overline{0}}) + \displaystyle \sum_{\deg s \leq n} u_{\lambda,s} \cdot s.
\end{equation}
%Namely, we have
%\begin{equation}\begin{array}{ll}
%\lambda &= x_{\overline{2n},2} x_{0,\bar{0}} - x_{\bar{0},\overline{2n}} - %x_{2,\bar{0}}\\
%\\&= \Phi(\lambda) + \displaystyle \sum_{\deg s < n } u_{x_{\bar{0},0},s}\cdot s \cdot x_{\overline{2n},2}\\&\quad - \left( \Phi(x_{0,2n})+\Phi(x_{2,\bar{0}})+\displaystyle\sum_{\deg s <n-1} u_{x_{0,2n},s}\cdot s   +\displaystyle\sum_{\deg s <n-1} u_{x_{2\bar{0}},s}\cdot s   \right)
%\end{array}\end{equation}
The right-hand side above is equal to $\Phi(\lambda)$, which is of degree $n+1$, plus some terms of degree $\leq n$, 
%
%Only $\Phi(\lambda)$ is of degree $n+1$; all the other terms have degree $\leq n$, 
 hence (\ref{monolambda}) is true.
 The identities (\ref{monoxab}) and (\ref{monolambda}) yield the first lower triangular rows of $U$.

%By multiplication, we deduce easily from these two expressions the property (\ref{monoMS}); moreover, in the sum, one checks immediately that the monomials $s\in\mathcal{S}$ such that $u_{ms}\neq 0$ are also such that $\deg s < \deg m=\deg\Phi(m)$.

The relation \eqref{monoMS} now follows from \eqref{monoxab} and \eqref{monolambda} because of the compatibility of the orderings with multiplication. 
More precisely, note that we clearly have, for any two non-trivial monomials $m,m'\in\mathcal{M}$,
\begin{equation}\begin{array}{l}\Phi(m\cdot m')=\Phi(m)\Phi(m'),\quad \:\Phi(m)<\Phi(mm'),\quad\Phi(m')<\Phi(mm'),\\\mathrm{and}\: \:\:\deg(mm')=\deg(m)+\deg(m').\end{array}\end{equation}
Thus for any two cluster variables $x_{ab}$ and $x_{cd}$, we have
\begin{equation}\begin{array}{ll}
x_{ab}x_{cd}&=\Phi(x_{ab}x_{cd}) + \displaystyle\sum_{s<\Phi(x_{ab})} u_{x_{ab},s} s\Phi(x_{cd}) + \displaystyle\sum_{s'<\Phi(x_{cd})} u_{x_{cd},s'} \Phi(x_{ab}) s' \\\\&\qquad+ \displaystyle \sum_{s<\Phi(x_{ab}),s'<\Phi(x_{cd})} u_{x_{ab},s}u_{x_{cd},s'} s s',
\end{array}\end{equation}
where each term in the three sums is of degree $<\deg x_{ab}+\deg x_{cd}$. 

Likewise,  for each monomial $m_0\in\mathcal{M}_0$, the element $\lambda m_0$ is equal to $\Phi(\lambda m_0)$ plus some terms of degree $<n+1+\deg m_0$. This product compatibility immediately implies (\ref{monoMS}).

Finally, the unitriangularity of $U$ readily implies that, since $\mathcal M$ is free over $\mathbb Z$, then $\mathcal S$ is free over $\mathbb Z$. In conclusion, $\mathcal S$ is a $\mathbb Z$-basis of $\mathcal A_n$. \hfill$\Box$

%Therefore, since $\mathcal{M}$ is a free set linked to $\mathcal{S}$ by the unitriangular matrix $U$, the set $\mathcal{S}$ is free, which allows us to conclude that $\mathcal{S}$ is a $\mathbb{Z}$-basis of $\mathcal{A}_n$.
%\hfill $\Box$

\end{proof}

It follows from the proof of Proposition \ref{genbasisA} that the set $\mathcal M_0$ os cluster monomials is a $\mathbb Z\lbrack \lambda\rbrack$-basis of $\mathcal A_n$. We now use it to introduce another interesting $\mathbb Z$-basis of $\mathcal{A}_n$, which will be meaningful in representation theory.

For $k\in\mathbb{N}$, denote by $S_k(u)\in\mathbb{Z}\lbrack u\rbrack$ the $k$-th Tchebychev polynomial of the second kind% (see \cite{HM} and \cite{Ked})
, given by
\begin{equation}S_k(u)^2=S_{k-1}(u)S_{k+1}(u)+1\end{equation}
with initial conditions $S_0(u)=1$ and $S_1(u)=u$.  
%It is known that these polynomials form a basis of $\mathbb{Z}\lbrack u\rbrack$. Therefore, we can define a third basis of $\mathcal{A}_n$.
%
Recall from the proof of Proposition \ref{genbasisA}   that $\mathcal{M}_0$ is the set of cluster monomials of $\mathcal{A}_n$ that do not contain powers of $\lambda$. 
Then the set 
\begin{equation}\label{basisB}
\mathcal{B}:=\{ S_k(\lambda)\cdot m,\: k\in\mathbb{N}, m\in \mathcal{M}_0\}\end{equation}
is a $\mathbb Z$-basis of $\mathcal{A}_n$. 
This new basis will later correspond (Section \ref{s3}) to the basis of classes of simple modules in the Grothendieck ring of a category of representations of $\Uereslsl2$.

\section{Representations of quantum affine algebras}\label{s22}

Let $l$ be an integer, $l\geq 2$. Introduce the root of unity 
\begin{equation}\label{epsilon}\varepsilon :=\left\{ \begin{array}{ll}
\exp\left(\displaystyle\frac{i\pi}{l}\right) &\mbox{if } l \mbox{ is even}\\\\
\exp\left(\displaystyle\frac{2 i\pi}{l}\right) &\mbox{if } l \mbox{ is odd}
\end{array}        \right.\end{equation}
Thus $l$ is the order of $\varepsilon^2$, and we have $\varepsilon^{2l}=1$. Following \cite{FM}, let us also write
\begin{equation}\varepsilon^* := \varepsilon^{l^2}=1. \end{equation}

%\paragraph
\subsection{Quantum affine algebras and their specialisations}\label{s221} 
Let $\mathfrak{g}$ be a finite-dimensional complex simple Lie algebra of simply-laced type, with %Lie bracket\nolinebreak $\lbrack .,.\rbrack$, 
Dynkin diagram $\delta$, vertex set $I=\llbracket 1,n\rrbracket$ and Cartan matrix $C=(a_{ij})_{i,j\in I}$. Denote by $\alpha_i$ the simple roots, by $\varpi_i$ the fundamental weights and by $P$ the weight lattice \cite{B}. %Let $x_i^\pm, h_i\:(i\in I)$ be the Chevalley generators of $\mathfrak{g}$.

%Let $L\mathfrak{g}:=\mathfrak{g}\otimes\mathbb{C}\lbrack t,t^{-1}\rbrack$ be the \emph{loop algebra} of $\mathfrak{g}$ (see \cite{Lec}). %This is a Lie algebra, with Chevalley generators $x_i^\pm \otimes t^r, h_i\otimes t^m\:(i\in I,r,m\in\mathbb{Z}).$

%Let  $\widehat{\mathfrak{g}}=L\mathfrak{g}\oplus \mathbb{C}c$ be the central extension of $L\mathfrak{g}$, where $c$ is a generator called the central charge. This is the \emph{affine Kac-Moody algebra} corresponding to $\mathfrak{g}$, with
%extended vertex set $\widehat{I}=I\sqcup \{0\}$ and generalised Cartan matrix  $\widehat{C}:=(a_{ij})_{i,j\in\hat{I}}$.

Let $q$ be an indeterminate; then $\mathbb{C}(q)$ is the field of rational functions of $q$ with complex coefficients, and $\mathbb{C}\lbrack q,q^{-1}\rbrack$ is the ring of complex Laurent polynomials in $q$.

Let $U_q(\widehat{\mathfrak{g}})$ be the quantum affine algebra associated with ${\mathfrak g}$ \cite{FR2}. This is a Hopf algebra over $\mathbb C (q)$. Denote by $\Uqlg$ the quantum loop algebra, which is isomorphic to a quotient of  $U_q(\widehat{\mathfrak{g}})$ where the central charge is mapped to 1. Therefore, $\Uqlg$ inherits a Hopf algebra structure. For more information on $L\mathfrak g$, $\widehat{\mathfrak g}$ and their quantum enveloping algebras, we refer the reader to \cite{CP}, \cite{CH} and \cite{Lec}.

We will be interested in finite-dimensional representations of  $U_q(\widehat{\mathfrak{g}})$, on which the central charge acts trivially. It is therefore sufficient to consider finite-dimensional representations of $\Uqlg$, and we will focus on these onwards.

Let $\Uqreslg$ be the restricted integral form corresponding to $\Uqlg$ \cite{CP2}. This is a Hopf algebra over $\mathbb C \lbrack q,q^{-1}\rbrack$.

Let us now specialise $\Uqreslg$ at the root of unity $\varepsilon$, by setting
\begin{equation}\begin{array}{l}
\Uereslg:= \Uqreslg\otimes_{\mathbb{C}\lbrack q,q^{-1}\rbrack} \mathbb{C}\\
\end{array}\end{equation}
via the algebra homomorphism 
\begin{equation}\begin{array}{ccc}
\mathbb{C}\lbrack q,q^{-1}\rbrack&\longrightarrow&\mathbb{C}\\
q&\longmapsto&\varepsilon
\end{array}\end{equation}
For an element $x$ of $\Uqreslg$
, we denote the corresponding element of $\Uereslg$
 also by $x$.

\subsection{Representations of \texorpdfstring{$\Uqlg$}{Uq(Lg)}}\label{s222} 

\paragraph{The category $\mathcal{C}_q$.} Let $\mathcal{C}_q$  be the category of finite-dimensional type 1 $\Uqlg$-modules (see \cite[Section 11.2]{CP}). It is known that $\mathcal{C}_q$ is a monoidal, abelian, non semisimple category.

An object $V$ in $\mathcal{C}_q$ has a $q$-character $\chi_q(V)$ \cite{FR2}, which is a Laurent polynomial with positive integer coefficients in variables $Y_{i,a}$, $i\in I$, $a\in\mathbb C(q)$. Any irreducible object or $\mathcal{C}_q$ is determined, up to isomorphism, by its $q$-character. Such irreducible representations are parametrised \cite{FR2} by the highest dominant monomial of their $q$-characters, which is a \emph{dominant monomial}, i.e. it contains only positive exponents.

Let $\mathcal{M}_q$ be the set of Laurent monomials in the $Y_{i,a}$, and let $\mathcal{M}_q^+$ be the subset of dominant monomials in $\mathcal{M}_q$. If $S$ is a simple object of $\mathcal{C}_q$ such that the highest monomial of $\chi_q(S)$ is $m\in\mathcal{M}_q^+$, then $S$ will be denoted by $L(m)$ \cite{Lec}.   
%The simple module $L(m)$ is called %\emph{minuscule} if $m$ is the only dominant monomial in $\chi_q(L(m))$, and 
%\emph{prime} if it cannot be written as a tensor product of non-trivial modules. 
For $i\in I$ and $a\in\mathbb{C}(q)$, the simple modules $L(Y_{i,a})$ are called fundamental modules. A \emph{standard module} is a tensor product of fundamental modules.

Let $K_0(\mathcal{C}_q)$ be the Grothendieck ring of $\mathcal{C}_q$. It is known ({\cite[Corollary 2]{FR2}}) that 
\begin{equation}K_0(\mathcal{C}_q) \cong \mathbb{Z}\lbrack \lbrack L(Y_{i,a})\rbrack, i\in I, a\in\mathbb{C}(q)\rbrack.\end{equation}

For $i\in I,\:k\in\mathbb{N}^*,\: a\in\mathbb{C}(q)$, the simple object
\begin{equation}W_{k,a}^{(i)} = L(Y_{i,a} Y_{i,aq^2}\dots Y_{i,aq^{2(k-1)}})\end{equation}
is called a \emph{Kirillov-Reshetikhin module}. 
In particular, we have $W_{1,a}^{(i)}=L(Y_{i,a})$ and by convention, $W_{0,a}^{(i)}=\mathbf{1}$ for any $a,i$.

The classes $\lbrack W_{k,a}^{(i)}\rbrack$ in $K_0(\mathcal C_q)$ satisfy a system of equations called $T$-system: %. These equations are indexed by $k\in\mathbb{N}^*,\:i\in I,\:a\in\mathbb{C}(q)$. The $T$-system reads
\begin{equation}\lbrack W_{k,a}^{(i)}\rbrack\lbrack W_{k,aq^2}^{(i)}\rbrack = \lbrack W_{k+1,a}^{(i)}\rbrack\lbrack W_{k-1,aq^2}^{(i)}\rbrack+\displaystyle \prod_{j\sim i} \lbrack W_{k,aq}^{(j)}\rbrack\quad (i\in I,k\in\mathbb{N}^*,a\in\mathbb{C}(q)),\end{equation}
where $j\sim i$ means that $j$ is a neighbour of $i$ in the Dynkin diagram $\delta$.

\begin{example}
If ${\mathfrak{g}}={\mathfrak{sl}_2}$, the $T$-system reads
\begin{equation}\lbrack W_{k,a}^{(1)}\rbrack\lbrack W_{k,aq^2}^{(1)}\rbrack = \lbrack W_{k+1,a}^{(1)}\rbrack\lbrack W_{k-1,aq^2}^{(1)}\rbrack+1\quad (k\in\mathbb{N}^*).\end{equation}
\end{example}

%\begin{remark}
%In terms of Drinfeld polynomials, the variable $Y_{i,a}$ corresponds to the tuple of  polynomials $(P_j(u))_{j\in I}$ such that $P_j(u)=1$ if $j\neq i$ and $P_i(u)= 1-au$.
%\end{remark}

\paragraph{The category $\CZ$.} Let us now define a subcategory of $\mathcal{C}_q$, following \cite{HL}.

Since the Dynkin diagram $\delta$ is a bipartite graph, there is a partition of the vertices $I=I_0 \sqcup I_1$, where each edge connects a vertex of $I_0$ with a vertex of $I_1$. For $i\in I$, set
\begin{equation}\xi_i:=\left\{ \begin{array}{ll}
0&\mbox{if } i\in I_0\\
1&\mbox{if }i\in I_1
\end{array}\right.\end{equation}
The map $i\mapsto \xi_i$ is determined by the choice of $\xi_{i_0}\in\{0,1\}$ for a single vertex $i_0$. Therefore, there are only two possible collections of $\xi_i$.

Let $\CZ$ be the full subcategory of $\mathcal{C}_q$ whose objects $M$ have all their composition factors $L(m)$ such that  $m$ contains only variables of the form $Y_{i,q^{2k+\xi_i}}$ $(k\in\mathbb Z,\:i\in I)$. This is a tensor subcategory of $\mathcal{C}_q$.

The ring $R_\mathbb{Z} := K_0(\mathcal{C}_{q^{\mathbb{Z}}})$ is the subring of $K_0(\mathcal{C}_q)$ generated by the classes of the form $\lbrack L(Y_{i,q^{2k+\xi_i}})\rbrack\:(i\in I,k\in\mathbb{Z}).$

%The description of the simple objects in $\mathcal{C}_q$ is reduced (see \cite[Section 3.7]{HL}) to the study of simple objects in $\CZ$. We therefore focus on $\CZ$ from now on.

\subsection{Representations of \texorpdfstring{$\Uereslg$}{Ueres(Lg)}}\label{s223}

Let $\Cres$ be the category of finite-dimensional type 1 $\Uereslg$-modules. Let $K_0(\Cres)$ be its Grothendieck ring. 

An object $V$ in $\Cres$ has an $\varepsilon$-character $\chi_\varepsilon(V)$ \cite{FM}, which is a Laurent polynomial with positive integer coefficients in variables $Y_{i,a}$, $i\in I$, where $a\in\mathbb C^*$. 

The parametrisation of the simple objects by their highest  monomials also holds on $\mathcal C_\varepsilon$, with $q$ replaced by $\varepsilon$  (see \cite{CP2,FM}). 
Let $\mathcal{M}_\varepsilon$ be the set obtained from $\mathcal{M}_q$ by replacing $q$ by $\varepsilon$, and let $\mathcal M_\varepsilon^+$ be the subset of dominant monomials in $\mathcal M_\varepsilon$. 
The simple module whose highest weight monomial is $m\in\mathcal{M}_\varepsilon^+$ will be denoted by $L(m)$.

In particular, the \emph{fundamental modules} of $\Uereslg$ are the simple objects \\$L(Y_{i,a})$, where $i\in I,\:a\in\mathbb{C}^*$, and the \emph{standard modules} are the tensor products of fundamental modules. The simple module $L(m)$ is called %\emph{minuscule} if $m$ is the only dominant monomial in $\chi_q(L(m))$, and 
\emph{prime} if it cannot be written as a tensor product of non-trivial modules. 

The ring $K_0(\Cres)$ is the ring of polynomials with integer coefficients in variables $\lbrack L(Y_{i,a})\rbrack$, $i\in I,a\in\mathbb{C}^*$ (see \cite[Section 3.1]{FM}).

For a simple object $V$ of $\mathcal{C}_q$, with highest weight vector $v$, it is known \cite[Proposition 2.5]{FM} that the $\Uqreslg$-module $V^{\mathrm{res}}:= \Uqreslg\cdot v$   is a free $\mathbb C\lbrack q,q^{-1}\rbrack$-module.  
Put $V_\varepsilon^{\mathrm{res}}=V^{\mathrm{res}}\otimes_{\mathbb{C}\lbrack q,q^{-1}\rbrack} \mathbb C$, where as above $q$ acts on $\mathbb C$ by multiplication by $\varepsilon$. This is a $\Uereslg$-module called the \emph{specialisation of $V$ at $q=\varepsilon$}. %its image in $\Cres$ obtained by specialising $V^{\mathrm{res}}$ at $\varepsilon$.

For $i\in I$ and $a\in\mathbb{C}^*$, introduce the following notation:
\begin{equation}\mathbf{Y}_{i,a}:= \displaystyle \prod_{j=0}^{l-1} Y_{i,a\varepsilon^{2j+\xi_i}}.\end{equation}
Note that since $\varepsilon^{2l}=1$, we have $\mathbf{Y}_{i,\varepsilon^{2r}}=\mathbf{Y}_{i,1}$ for any $r\in\mathbb{Z}$. 
A monomial in the variables $Y_{i,a}$ is called \emph{$l$-acyclic} if it is not divisible by $\mathbf{Y}_{j,b}$ for any $j\in I,\:b\in\mathbb C^*$.

Let $\CZres$ be the full subcategory of $\Cres$ whose objects $M$ have all their composition factors $L(m)$ such that $m\in\mathcal{M}_\varepsilon^+$ contains only variables of the form $Y_{i,\varepsilon^{2k+\xi_i}}$, where $i\in I$ and $k\in\mathbb{Z}$. 
 For example, the modules $z_i:= L(\mathbf{Y}_{i,1})$ are objects of $\CZres$.

Let $R_{\varepsilon^{\mathbb{Z}}}=K_0(\CZres)$ be the Grothendieck ring of $\CZres$. This is the subring of $K_0(\Cres)$ generated by the classes $\lbrack L(Y_{i,\varepsilon^{2k+\xi_i}})\rbrack, i\in I,\:k\in\mathbb{Z}.$

%The description of simple objects in $\Cres$ reduces (see \cite{FM}) to the study of simple objects in $\CZres$. We will therefore focus on $\CZres$. \label{redcz}

\subsection{Representations of \texorpdfstring{$\Ueereslg$}{Ueres(Lg)}}\label{s225}

Consider the category $\mbox{Rep }\Ueereslg$ of finite-dimensional type 1 representations of $\Ueereslg$. Since we are in the case where $\varepsilon^{*}=\varepsilon^{l^2}=1$, this category is equivalent to the category $\Rep L{\mathfrak{g}}$ of finite-dimensional $ L{\mathfrak{g}}$-modules.

For $a\in\mathbb{C}^*$, consider the evaluation morphism $\phi_a: L{\mathfrak{g}}\cong \mathfrak{g}\lbrack t,t^{-1}\rbrack \rightarrow \mathfrak{g}$ that maps a Laurent polynomial $P(t)$ to its evaluation $P(a)$ at $a$. %computes the value $P(a)$ of a Laurent polynomial $P(t)$ at $a$.
 For an irreducible representation $V_\lambda$ of $\mathfrak{g}$ that has highest weight $\lambda$, the pullback $V_\lambda(a):=\phi_a^*(V_\lambda)$ is an irreducible $L {\mathfrak{g}}$-module. It is known \cite{CP1} that any simple object $S\in\Rep L{\mathfrak{g}}$ is a tensor product of evaluation modules $V_{\lambda_1}(a_1)\otimes \dots \otimes V_{\lambda_n}(a_n)$, such that $a_i\neq a_j$ for every $i\neq j$.

Let $V$ be a representation of $\mathfrak{g}$, with weight decomposition $V=\bigoplus_\mu V_\mu$. Recall that the character $\chi(V)$ of $V$ is a polynomial in variables $y_i^{\pm 1},i\in I$ defined by  \begin{equation}\chi(V)=\sum_\mu \dim V_\mu \cdot y^\mu,\end{equation} where for a weight $\mu=\sum_{i\in I} \mu_i\varpi_i$, we set $y^\mu = \prod_{i\in I}y_i^{\mu_i}$.

%

%The Grothendieck ring $K_0(\Rep L{\mathfrak{g}})$ has a basis that consists of isomorphism classes of simple modules $V_\lambda(a)$ \cite{C, CP1}, so that all the finite-dimensional representations of $L\mathfrak g$ are well known.  

\subsection{From \texorpdfstring{$q$}{q}-characters to \texorpdfstring{$\varepsilon$}{e}-characters}\label{s226}

Frenkel and Mukhin (\cite[Proposition 2.5]{FM}) prove that there is a surjective ring morphism $K_0(\mathcal{C}_q)\rightarrow K_0(\Cres)$ that maps the isomorphism class $\lbrack V\rbrack$ of a simple object $V$ of $\mathcal{C}_q$ to the class $\lbrack V_\varepsilon^{\mathrm{res}}\rbrack$.

Since the map $\chi_q:K_0(\mathcal{C}_q)\rightarrow \mathbb{Z}\lbrack Y_{i,a}^{\pm 1},\: i\in I,\:a\in\mathbb{C}(q)\rbrack$ is an injective ring morphism (see \cite[Theorem 3]{FR2}), Frenkel and Mukhin (\cite[Theorem 3.2]{FM}) prove that the $\varepsilon$-character map $\chi_\varepsilon:K_0(\Cres)\rightarrow \mathbb{Z}\lbrack Y_{i,a}^{\pm 1},\:i\in I,\:a\in\mathbb{C}^*\rbrack$ is also an injective ring morphism.
Moreover, Theorem 3.2 in \cite{FM} also states that for a simple module $V\in\mathcal{C}_q$, the $\varepsilon$-character $\chi_\varepsilon(V_\varepsilon^{\mathrm{res}})$ is obtained by substituting $q\mapsto \varepsilon$ in $\chi_q(V)$. The $\varepsilon$-characters $\chi_\varepsilon$ thus satisfy combinatorial properties similar to $q$-characters (\cite[Section 3.2]{FM}).

\subsection{The Frobenius pullback}\label{s227} Following Lusztig \cite{Lu}, Frenkel and Mukhin \cite{FM} describe a \emph{quantum Frobenius map} $\mathrm{Fr}:\Uereslg\rightarrow\Ueereslg$ that gives rise to the \emph{Frobenius pullback} \begin{equation}\mbox{Fr}^* : K_0(\Rep\Ueereslg)\longrightarrow K_0(\Cres).\end{equation}

\begin{proposition}[{\cite[Lemma 4.7]{FM}}]\label{frob}
The Frobenius pullback 
\begin{equation}\mathrm{Fr}^* :\begin{array}{ccc} K_0(\Rep\Ueereslg) &\longrightarrow&  K_0(\Cres)%\\
%\lbrack L(Y_{i,a}) \rbrack&\longmapsto & \lbrack L(\mathbf{Y}_{i,a})\rbrack
\end{array}
\end{equation}
 is the injective ring homomorphism such that $\mathrm{Fr}^*(\lbrack L(Y_{i,a}) \rbrack)= \lbrack L(\mathbf{Y}_{i,a})\rbrack$.
\end{proposition}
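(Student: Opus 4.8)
The plan is to view $\mathrm{Fr}^*$ as pullback along the quantum Frobenius morphism and to pin it down on the polynomial generators of the source ring. First I would record that $\mathrm{Fr}\colon\Uereslg\to\Ueereslg$ is a homomorphism of Hopf algebras (Lusztig's construction, as used in \cite{FM}): for a $\Ueereslg$-module $M$ the underlying space, equipped with the twisted action $u\cdot m:=\mathrm{Fr}(u)\,m$, is a $\Uereslg$-module, and the compatibility of $\mathrm{Fr}$ with the coproducts makes $\mathrm{Fr}^*$ a monoidal exact functor $\Rep\Ueereslg\to\Cres$. Exactness yields a well-defined additive map on Grothendieck groups, and monoidality gives $\mathrm{Fr}^*([M][N])=\mathrm{Fr}^*([M])\,\mathrm{Fr}^*([N])$, so $\mathrm{Fr}^*$ is a ring homomorphism.

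Since $\Rep\Ueereslg\simeq\Rep L\mathfrak g$, the ring $K_0(\Rep\Ueereslg)$ is the polynomial ring on the classes $[L(Y_{i,a})]$ of the fundamental evaluation modules $V_{\varpi_i}(a)$. It therefore suffices to identify $\mathrm{Fr}^*([L(Y_{i,a})])$, after which the asserted formula together with multiplicativity determines $\mathrm{Fr}^*$ entirely.

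The key step is the computation of $\mathrm{Fr}^*(L(Y_{i,a}))$. Here I would use that $\mathrm{Fr}$ carries the $l$-th powers of the Drinfeld Cartan currents of $\Uereslg$ onto the corresponding currents of $\Ueereslg$; tracking the resulting action on a highest-weight vector shows that the eigenvalue datum of the pulled-back module is obtained from that of $L(Y_{i,a})$ by replacing the single parameter $a$ with the whole packet $\{a\varepsilon^{2j+\xi_i}\}_{0\le j\le l-1}$. Through Frenkel--Mukhin's dictionary between highest monomials and these eigenvalues, this means that the highest monomial of $\chi_\varepsilon(\mathrm{Fr}^*(L(Y_{i,a})))$ is precisely $\mathbf{Y}_{i,a}=\prod_{j=0}^{l-1}Y_{i,a\varepsilon^{2j+\xi_i}}$. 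Because $\mathrm{Fr}$ is surjective, the pullback of a simple $\Ueereslg$-module is a simple $\Uereslg$-module, so $\mathrm{Fr}^*(L(Y_{i,a}))$ is simple with highest monomial $\mathbf{Y}_{i,a}$, that is, $\mathrm{Fr}^*([L(Y_{i,a})])=[L(\mathbf{Y}_{i,a})]$.

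Finally, injectivity would follow because $K_0(\Rep\Ueereslg)$ is a polynomial algebra in the $[L(Y_{i,a})]$ while their images $[L(\mathbf{Y}_{i,a})]$ are algebraically independent in $K_0(\Cres)$: by the injectivity of $\chi_\varepsilon$ recalled in Section~\ref{s226}, algebraic independence can be tested on highest monomials, and the $\mathbf{Y}_{i,a}$ are multiplicatively independent Laurent monomials. The main obstacle is the character computation in the third step: making precise how Lusztig's quantum Frobenius acts on the Drinfeld generators of the loop algebra, and extracting from it the packet formula for $\mathbf{Y}_{i,a}$, is the genuinely technical input and is the heart of \cite[Lemma 4.7]{FM}.
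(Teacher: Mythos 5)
The paper offers no proof of this proposition at all: it is imported verbatim from \cite[Lemma 4.7]{FM}, so the only benchmark is Frenkel--Mukhin's original argument, and your outline --- restriction along the Hopf-algebra map $\mathrm{Fr}$ giving an exact monoidal functor and hence a ring homomorphism on Grothendieck rings, identification of the highest monomial of $\mathrm{Fr}^*(L(Y_{i,a}))$ as the packet $\mathbf{Y}_{i,a}$ with simplicity preserved by surjectivity of $\mathrm{Fr}$, and injectivity via algebraic independence of the $\lbrack L(\mathbf{Y}_{i,a})\rbrack$ tested on highest monomials through the injectivity of $\chi_\varepsilon$ --- reproduces that argument in substance. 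The one point to keep straight is the parametrisation: since $\mathbf{Y}_{i,a\varepsilon^2}=\mathbf{Y}_{i,a}$, the packet $\{a\varepsilon^{2j+\xi_i}\}_j$ attached to a source parameter is really the fibre of the $l$-th power map (the paper's later convention $Y_{i,a^l}\mapsto\mathbf{Y}_{i,a\varepsilon^{\xi_i}}$), and it is this identification of distinct source parameters with pairwise disjoint variable packets that makes your multiplicative-independence count, and hence the injectivity claim, come out right.
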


%\begin{remark}
%In terms of Drinfeld polynomials, the pullback $\mathrm{Fr}^*$ maps the $i$-th polynomial $(1-a u)$ of the fundamental module $W_{1,a}^{(i)}$ onto \begin{equation}(1-a^l u^{l}) =(1-a u)(1-a\varepsilon^2 u)\dots(1-a\varepsilon^{2(l-1)} u),\end{equation} since the monomial $\mathbf{Y}_{i,a}$ corresponds to the $i$-th polynomial $(1-a^l u^l)$.
%\end{remark}

%\paragraph
\subsection{Decomposition theorem}\label{s228} 
Let $m\in\mathcal M_\varepsilon^+$. There is a unique factorisation $m=m^0 m^1$ where $m^1$ is a monomial in the variables $\mathbf{Y}_{i,a}$, and $m^0$ is $l$-acyclic.  %not divisible by any variable $\mathbf{Y}_{i,1}$. 
The monomial $m^0$ is called the $l$-acyclic part of $m$. The following theorem was proved by Chari and Pressley for roots of unity of odd order \cite{CP2} and generalised by Frenkel and Mukhin \cite{FM} to roots of unity of arbitrary order.

\begin{theorem}[{\cite[Theorem 5.4]{FM}}] \label{decthm}
Let $L(m)$ be a simple object of $\Cres$. Then \begin{equation}L(m)\cong L(m^0)\otimes L(m^1).\end{equation}

\end{theorem}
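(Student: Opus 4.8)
The plan is to recognize this statement as the loop-algebra analogue of Lusztig's Steinberg tensor product theorem, with the Frobenius pullback of Proposition \ref{frob} playing the role of the quantum Frobenius map, and to reduce everything to the \emph{irreducibility} of the tensor product $L(m^0)\otimes L(m^1)$. First I would identify the two factors concretely. Writing $m^1=\prod_{i,a}\mathbf{Y}_{i,a}^{c_{i,a}}$, the monomial $\bar m^1:=\prod_{i,a}Y_{i,a}^{c_{i,a}}$ is a dominant monomial for $\Ueereslg$; let $W:=L(\bar m^1)$ be the corresponding simple $\Ueereslg$-module. Granting that the Frobenius pullback of a simple module is again simple (this is implicit in the construction of $\mathrm{Fr}^*$), Proposition \ref{frob} together with the substitution $Y_{i,a}\mapsto\mathbf{Y}_{i,a}$ identifies the highest monomial of $\mathrm{Fr}^*(W)$ with $m^1$, so $\mathrm{Fr}^*(W)\cong L(m^1)$ by the parametrization of simples by their highest monomials. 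In particular every monomial occurring in $\chi_\varepsilon(L(m^1))$ is a product of complete $l$-strings $\prod_{j=0}^{l-1}Y_{i,b\varepsilon^{2j+\xi_i}}$.

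Next I would check that $m=m^0m^1$ is the highest weight monomial of $L(m^0)\otimes L(m^1)$: the tensor product of the two highest weight vectors is a highest weight vector whose monomial is the product $m^0m^1$, which by the definition of the $l$-acyclic decomposition is exactly $m$, occurring with coefficient $1$ in $\chi_\varepsilon(L(m^0))\,\chi_\varepsilon(L(m^1))$, while every other monomial there is a product of a monomial $\leq m^0$ and a monomial $\leq m^1$ and hence strictly lower. Since $\chi_\varepsilon$ is an injective ring homomorphism (Section \ref{s226}), we have $[L(m^0)][L(m^1)]=[L(m^0)\otimes L(m^1)]$ in $K_0(\Cres)$, so it remains only to prove that $L(m^0)\otimes L(m^1)$ is simple; its highest monomial being $m$, simplicity then forces $L(m^0)\otimes L(m^1)\cong L(m)$.

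The heart of the argument, and the step I expect to be the main obstacle, is this irreducibility. I would prove it by the standard two-sided cyclicity criterion: if both $L(m^0)\otimes L(m^1)$ and $L(m^1)\otimes L(m^0)$ are generated by the tensor product of their highest weight vectors, then $L(m^0)\otimes L(m^1)$ is simple. The cyclicity in turn follows from a resonance (pole) analysis of the normalized $R$-matrix between the two factors: the spectral data of $m^1$ consists entirely of complete $l$-strings $\mathbf{Y}_{i,a}$, whereas $m^0$ is $l$-acyclic and so involves only strings of length $<l$; the poles governing reducibility of the tensor product arise precisely when such strings resonate, which the $l$-acyclicity of $m^0$ together with the full-string form of $m^1$ rules out. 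Equivalently, one can argue \emph{à la} Steinberg: the finite part $\Uefinlg$ acts on $L(m^1)=\mathrm{Fr}^*(W)$ through the counit, so $L(m^0)\otimes L(m^1)$ restricts over $\Uefinlg$ to a direct sum of copies of $L(m^0)$, while its Frobenius-covariant part recovers $W$, and combining these two pieces of information through the highest weight theory isolates a unique composition factor.

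I expect the delicate point to be making the resonance analysis rigorous, namely controlling exactly which tensor products of the building blocks $L(m^0)$ and $L(\mathbf{Y}_{i,a})$ can fail to be cyclic, since this is where the precise arithmetic of the root of unity $\varepsilon$ and the length-$l$ strings genuinely enters; the remaining character and highest-weight bookkeeping, by contrast, is routine once the injectivity of $\chi_\varepsilon$ and the Frobenius realization of $L(m^1)$ are in hand.
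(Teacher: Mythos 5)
You should first note that the paper contains no proof of Theorem \ref{decthm} to compare against: it is imported verbatim from Frenkel--Mukhin \cite[Theorem 5.4]{FM} (proved for odd order by Chari--Pressley \cite{CP2}), so your proposal can only be measured against those published proofs. Your outer scaffolding matches their architecture and is sound: identifying $L(m^1)\cong\mathrm{Fr}^*(W)$ via Proposition \ref{frob} (simplicity of the pullback is indeed automatic since $\mathrm{Fr}$ is surjective), verifying that $m=m^0m^1$ is the unique highest dominant monomial of $\chi_\varepsilon(L(m^0))\chi_\varepsilon(L(m^1))$, and reducing everything to the irreducibility of $L(m^0)\otimes L(m^1)$ is exactly the right reduction.

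The genuine gap is in the irreducibility step, on both routes you offer. The pole/resonance analysis of normalized $R$-matrices, and the two-sided cyclicity criterion itself (cyclic in both orders $\Rightarrow$ simple), are generic-$q$ technology: they rest on the existence and meromorphic behaviour of the normalized $R$-matrix between the two factors, which is not available off the shelf for $\Uereslg$-modules at a root of unity, precisely because of the divided powers in the restricted form. So the phrase ``the poles governing reducibility arise when strings resonate, which $l$-acyclicity rules out'' has no precise referent here; this is where the argument cannot be run by analogy with the generic case. Your Steinberg-style alternative is the correct route (it is essentially Chari--Pressley's), but it silently assumes the key lemma that carries all the arithmetic content: that $l$-acyclicity of $m^0$ forces $L(m^0)$ to remain \emph{irreducible upon restriction to the finite part} $\Uefinlg$ (equivalently, that the simple $\Uefinlg$-modules are exactly the restrictions of the $l$-acyclic $L(m^0)$). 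Granting that lemma, the restriction of $L(m^0)\otimes\mathrm{Fr}^*(W)$ to $\Uefinlg$ is indeed $L(m^0)^{\oplus\dim W}$ (since $\Uefinlg$ acts through the counit on the pullback), and the standard Steinberg argument on $\mathrm{Hom}_{\Uefinlg}(L(m^0),-)$ recovers $W$ and concludes; without it, your claim that highest-weight bookkeeping ``isolates a unique composition factor'' does not follow. That lemma is the actual theorem proved in \cite[Section 9]{CP2} and \cite[Section 5]{FM}, so what you have is a correct reduction of the statement to the real content of those papers, not a proof.
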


Note that by Proposition \ref{frob}, $L(m^1)$ is the Frobenius pullback of an irreducible $\Ueereslg$-module.

%\begin{remark}
%The module $L(m^0)$ above stays irreducible when restricted to the small affine quantum group $\Uefinlg$, see \cite{FM}.
%\end{remark}

%\subsection{Consequences of the decomposition theorem}\label{s229}

\subsection{Characters of \texorpdfstring{$\Ueereslg$}{Ue*reslg}-modules} 
Since $\varepsilon^*=1$, the category $\Rep\Ueereslg$ is equivalent to $\Rep L{\mathfrak{g}}$, and in order to compute $\varepsilon^*$-characters we just need to know $\chi_1(V_\lambda(a))$, which is obtained from $\chi(V_\lambda)$ by replacing each $y_i^{\pm 1}$ by $Y_{i,a}^{\pm 1}$.

For an irreducible $\Ueereslg$-module $L(m)$, the pullback $\mathrm{Fr}^*(L(m))$ is the module $L(M)$, where $M$ is the monomial obtained from $m$ by replacing each $Y_{i,a^l}^{\pm 1}$ with $\mathbf{Y}_{i,a}^{\pm 1}$. 
The $\varepsilon$-character $\chi_\varepsilon(\mathrm{Fr}^*(L(m)))$ of $L(m)\in \Rep\Ueereslg$ is obtained from $\chi_{\varepsilon^*}(L(m))$ by replacing each $Y_{i,a^l}^{\pm 1}$ by $\mathbf{Y}_{i,a\varepsilon^{\xi_i}}^{\pm 1}$.

Therefore, the computation of $\varepsilon$-characters of simple objects of  $\Cres$ is reduced, by Theorem \ref{decthm}, to understanding the $\varepsilon$-characters of all representations $L(m^0)$ where $m^0$ is $l$-acyclic.%that remain irreducible when specialised to $\varepsilon$.

\subsection{The case  \texorpdfstring{$\mathfrak g = \mathfrak{sl}_2$    }{g=sl2}}\label{s310}

\subsubsection{Prime simple modules}

Let us introduce the following notation:
\begin{equation}W_\varepsilon(k,a,i):=\left( W_{k,a}^{(i)}\right)^{\mathrm{res}}_\varepsilon\quad(i\in I,\:k\in\mathbb N,\: a\in\mathbb C^*).\end{equation}
Here $I=\{1\}$, so we drop the index $i$ in the above notation. The specialisation of a Kirillov-Reshetikhin module of $\CZ$ to an object of $\CZres$ is not always simple. 

To each module $W_\varepsilon(k,\varepsilon^{2d})$ with $k<l$ and $d\in\llbracket 1,l\rrbracket$, attach the diagonal $\lbrack 2d-2,2d+2k\rbrack$ of the $2l$-gon $\mathbf{P}_{2l}$ defined in Section \ref{s231}. 
The following result is equivalent to a special case of a theorem from Chari and Pressley \cite{CP2}.
\begin{theorem}[{\cite[Theorem 9.6]{CP2}}] \label{KRA}
For $\mathfrak g = \mathfrak{sl}_2$, the simple objects of $\CZres$ are exactly the tensor products of the form
\begin{equation}
\displaystyle \bigotimes_{t=1}^r L(Y_{1,\varepsilon^{2d_t}} \dots Y_{1,\varepsilon^{2(d_t+k_t-1)}})^{\otimes a_t}\otimes L(\mathbf{Y}_{1,1}^{ a}) = \displaystyle \bigotimes_{t=1}^r W_\varepsilon (k_t,\varepsilon^{2d_t})^{\otimes a_t}\otimes L(\mathbf{Y}_{1,1}^{ a})
\end{equation}
where $r\in\mathbb N^*,\: k_1,\dots,k_r\in\llbracket 0,l-1\rrbracket,\:d_1,\dots,d_r\in\mathbb Z,\: a_1,\dots,a_r,a\in\mathbb N$, under the condition that for every $t\neq s\in\llbracket 1,r\rrbracket$, the diagonals $\lbrack 2d_t-2,2d_t+2k_t\rbrack$ and $\lbrack 2d_s-2,2d_s+2k_s\rbrack$ do not intersect inside $\mathbf{P}_{2l}$. %$k_t\neq k_s$ or $|d_t-d_s|\geq 2$ for every $t\neq s\in\llbracket 1,r\rrbracket$.

\end{theorem}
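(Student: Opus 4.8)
The plan is to derive the statement from Chari and Pressley's classification \cite[Theorem 9.6]{CP2} in two movements: first peel off the central tensor factor using the decomposition theorem, and then translate their ``general position'' condition on $q$-strings into the non-crossing condition on diagonals of $\mathbf{P}_{2l}$.

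First I would apply Theorem \ref{decthm} to an arbitrary simple object $L(m)$ of $\CZres$, writing $L(m)\cong L(m^0)\otimes L(m^1)$ with $m^0$ $l$-acyclic and $m^1$ a monomial in the variables $\mathbf{Y}_{1,a}$. Since $m\in\mathcal{M}_\varepsilon^+$ only involves variables $Y_{1,\varepsilon^{2k}}$, the only $\mathbf{Y}$-factors that can occur are of the form $\mathbf{Y}_{1,\varepsilon^{2r}}$, and these all equal $\mathbf{Y}_{1,1}$; hence $m^1=\mathbf{Y}_{1,1}^{\,a}$ for a unique $a\in\mathbb{N}$. By Proposition \ref{frob} together with the description of $\varepsilon^*$-characters, $L(m^1)=L(\mathbf{Y}_{1,1}^{\,a})$ is the Frobenius pullback of the $(a+1)$-dimensional simple $\Ueereslsl2$-module, which is exactly the last tensor factor in the statement. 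This reduces the whole problem to describing the $l$-acyclic simple modules $L(m^0)$.

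Next I would treat $L(m^0)$. For $\mathfrak{sl}_2$ every dominant monomial factors into $q$-strings $Y_{1,\varepsilon^{2d}}\cdots Y_{1,\varepsilon^{2(d+k-1)}}$, each being the highest monomial of a Kirillov--Reshetikhin module $W_\varepsilon(k,\varepsilon^{2d})$; the $l$-acyclicity of $m^0$ forces every string to have length $k<l$, so each attached diagonal $[2d-2,2d+2k]$ of $\mathbf{P}_{2l}$ is a genuine non-degenerate diagonal. Chari and Pressley's theorem then asserts that $L(m^0)$ is the tensor product of the KR modules coming from a decomposition of $m^0$ into strings that are pairwise in general position, and conversely that any such tensor product is simple. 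Combined with the first step, this already produces a tensor product of the required shape.

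The heart of the proof, and the step I expect to be the main obstacle, is the dictionary: two strings of lengths $k_t,k_s<l$ are in general position in the sense of \cite{CP2} \emph{if and only if} the diagonals $[2d_t-2,2d_t+2k_t]$ and $[2d_s-2,2d_s+2k_s]$ do not cross inside $\mathbf{P}_{2l}$. Because $\varepsilon^{2l}=1$, the spectral parameters live on the cyclic vertex set of $\mathbf{P}_{2l}$, so ``general position'' becomes a genuinely cyclic condition that must be checked configuration by configuration: disjoint strings and nested strings yield non-crossing diagonals, whereas the two special-position configurations that make a tensor product reducible --- strings that partially overlap, and strings that are cyclically adjacent so that their union is again a string --- correspond precisely to crossing diagonals. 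The delicate point is the cyclic bookkeeping: unlike the situation on the line, strings may wrap around the $2l$-gon, and one must verify that every crossing of diagonals is matched with a genuine failure of general position, and that the wrap-around introduces no spurious coincidence. Once this equivalence is established, the non-crossing hypothesis on the family $\{[2d_t-2,2d_t+2k_t]\}$ is exactly the condition that $\bigotimes_t W_\varepsilon(k_t,\varepsilon^{2d_t})^{\otimes a_t}$ be simple, and together with the first two steps this yields the claimed description of all simple objects of $\CZres$.
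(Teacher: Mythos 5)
The paper itself contains no proof of Theorem \ref{KRA}: it is imported, with the remark that it is equivalent to a special case of \cite[Theorem 9.6]{CP2}. So what a proof must supply is exactly the translation you sketch, and your skeleton is the intended one: split $L(m)\cong L(m^0)\otimes L(m^1)$ by Theorem \ref{decthm}, observe that inside $\CZres$ the only admissible $\mathbf{Y}$-variables are $\mathbf{Y}_{1,\varepsilon^{2r}}=\mathbf{Y}_{1,1}$, whence $m^1=\mathbf{Y}_{1,1}^a$; note that $l$-acyclicity caps string lengths at $l-1$; and invoke the Chari--Pressley general-position criterion for the $l$-acyclic part, reduced to a dictionary between general position and non-crossing of diagonals.

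There is, however, one concrete flaw in your dictionary as stated, located precisely at the step you identified as the heart of the matter. The spectral parameters $\varepsilon^{2d}$ live in a cyclic set of size $l$, while $\mathbf{P}_{2l}$ has $2l$ vertices: $d$ and $d+l$ label the same module $W_\varepsilon(k,\varepsilon^{2d})$ but centrally symmetric diagonals, so ``the diagonals do not cross'' is not well defined on single representatives, and your claim that cyclically adjacent strings always give crossing diagonals fails for some representative choices. Take $l=3$ and the adjacent singleton strings $\{1\}$ and $\{\varepsilon^2\}$, i.e.\ $(d_t,k_t)=(3,1)$ and $(d_s,k_s)=(1,1)$ with $d_t,d_s\in\llbracket 1,l\rrbracket$: the product $\lbrack L(Y_{1,1})\rbrack\lbrack L(Y_{1,\varepsilon^2})\rbrack=\lbrack L(Y_{1,1}Y_{1,\varepsilon^2})\rbrack+1$ by the $T$-system \eqref{tsyse}, so the tensor product is reducible, yet the attached diagonals $\lbrack 4,\bar{2}\rbrack$ and $\lbrack 0,4\rbrack$ of the hexagon share the vertex $4$ and do not cross; the crossing appears only between $\lbrack 4,\bar{2}\rbrack$ and the central partner $\lbrack \bar{0},\bar{4}\rbrack$ of $\lbrack 0,4\rbrack$. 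The equivalence you must actually prove is therefore: the strings are pairwise in general position if and only if the attached $\Theta$-orbits (centrally symmetric \emph{pairs} of diagonals) are pairwise non-crossing --- equivalently, the corresponding cluster variables of $\mathcal{A}_{l-1}$ are compatible, which is the reading the paper relies on in the proof of Theorem \ref{conjA1}. (The statement of Theorem \ref{KRA} is itself loose on this point, since $d_t\in\mathbb{Z}$ is only determined modulo $l$.) With crossing read orbit-wise, your four-configuration case check (disjoint non-adjacent and nested versus partially overlapping and adjacent with string union) does go through, and the rest of your argument is sound.
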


It follows from Theorem \ref{KRA} that the \emph{prime} simple modules of $\Uereslsl2$ are the modules $W_\varepsilon(k,\varepsilon^{2d})$ $(k<l,\:r\in\llbracket 1,l\rrbracket)$ and the Frobenius pullbacks $L(\mathbf{Y}_{1,1}^a)$ $(a\in\mathbb N^*)$. 
%It is therefore sufficient to study the modules $W_\varepsilon(k,\varepsilon^{2r})$ $(k<l)$.
%
%Recall that the irreducible representations of $\Uereslsl2$ are tensor products of specialised Kirillov-Reshetikhin modules. In particular, the specialised evaluation modules are such that for $k\in\llbracket 0,l-1\rrbracket$, $a\in\mathbb C^*$, 
%\begin{equation}W_\varepsilon(k,a) = \left( V(k\varpi)\lbrack a\rbrack\right)_\varepsilon^{\mathrm{res}}=L(Y_{1,a} Y_{1,a\varepsilon^2}\dots Y_{1,a\varepsilon^{2(k-1)}}),\end{equation}
%so that for $\mathfrak g = \mathfrak{sl}_2$, the specialised Kirillov-Reshetikhin modules stay irreducible for $k<l$ \cite[Theorem 9.2]{CP2}. We have $W_\varepsilon(0,a)=1$ and $W_\varepsilon(1,a)=L(Y_{1,a})$.
%However, $W_\varepsilon(l,\varepsilon)\ncong L(\mathbf{Y}_{1,1})$, and $L(\mathbf{Y}_{1,1})$ is the Frobenius pullback of the natural representation $V(\varpi)$ \cite[Theorem 9.3]{CP2}. This is one of the differences between the general $q$ case and the specialisation at $\varepsilon$. 
%
%Since we may focus on the category $\CZres$, from now on, we set $a=\varepsilon^{\xi_1}=1$. 
%In type $A_1$, since the Dynkin diagram has only one node, we can write
From now on, we drop the index $i=1$ in the variables $Y_{1,\varepsilon^n}$ and introduce the notation
\begin{equation}Y_{n}:= Y_{1,\varepsilon^{n}}\quad\mbox{and}\quad \mathbf{Y}_1=Y_0Y_2\dots Y_{2l-2}\end{equation}
for any integer $n$. We then have $Y_{2l+n}=Y_n$ for every $n$.  

With this new notation, we have
\begin{equation}\label{eq:KR}\begin{array}{l}W_\varepsilon(k,\varepsilon^{2r})= L(Y_{2r}Y_{2r+2}Y_{2r+4}\dots Y_{2(r+k-1)}) \quad(k,r\in\llbracket 0,l-1\rrbracket),\\ z:=z_1= L(\mathbf{Y}_{1}).\end{array}\end{equation}
%Chari and Pressley \cite[Theorem 9.6]{CP2} prove that every irreducible object of $\CZres$ is a tensor product of specialised Kirillov-Reshetikhin modules $W_\varepsilon(k,\varepsilon^{2r})$ $(k<l)$ and Frobenius pullbacks of $V(\varpi)$.

%It is thus enough to study the $\varepsilon$-characters of the modules $W_\varepsilon(k,\varepsilon^{2r})$ $(k<l)$.

\subsubsection{\texorpdfstring{$\varepsilon$}{e}-Characters}
In type $A_1$, we have an explicit expression for $q$-characters:
\begin{equation}\label{expA1}\begin{array}{ll}
\chi_q(W_{k,a}^{(1)}) &= Y_{1,a}Y_{1,aq^2}\dots Y_{1,aq^{2(k-2)}}Y_{1,aq^{2(k-1)}} \\&\quad+Y_{1,a}Y_{1,aq^2}\dots Y_{1,aq^{2(k-2)}}Y_{1,aq^{2k}}^{-1}\\&\quad+Y_{1,a}Y_{1,aq^2}\dots Y_{1,aq^{2(k-3)}}Y_{1,aq^{2(k-1)}}^{-1}Y_{1,aq^{2k}}^{-1}\\&\quad+\dots + Y_{1,aq^2}^{-1}Y_{1,aq^4}^{-1}\dots Y_{1,aq^{2(k-1)}}^{-1}Y_{1,aq^{2k}}^{-1} .
\end{array}\end{equation}
Each module $W_{k,q^{2d}}^{(1)}$ specialises to $W_\varepsilon(k,\varepsilon^{2d})$, which is irreducible if $k<l$. We can then directly translate the formula above into the $Y_n$ notation for $k<l$.
 Moreover, since $z_1=L(\mathbf{Y}_{1,1})$ is the pullback of $L(y_1)$, and $\chi(L(y_1))=y_1+y_1^{-1}$, we have 
\begin{equation}\label{foz1}\chi_\varepsilon( z_1) = Y_0Y_2\dots Y_{2(l-1)}+Y_0^{-1}Y_2^{-1}\dots Y_{2(l-1)}^{-1}.\end{equation}
This implies that the specialised Kirillov-Reshetikhin modules $W_\varepsilon(k,\varepsilon^{2d})$, for $k<l-1$, satisfy the $T$-system, and the $\varepsilon$-characters behave the same way as their $q$-character counterparts. Namely, for $k\leq l-2$, we have
\begin{equation}\label{tsyse}\begin{array}{l}\chi_\varepsilon(L(Y_0Y_2\dots Y_{2k}))\chi_\varepsilon(L(Y_2Y_4\dots Y_{2k+2} ))\\\qquad =\chi_\varepsilon(L(Y_2\dots Y_{2k}))\chi_\varepsilon(L(Y_0\dots Y_{2k+2}))+1.\end{array}\end{equation}
The difference of behaviour between $q$-characters and $\varepsilon$-characters resides in the $(l-1)$-th equation of the $T$-system, %. The following equation shows this difference, and
which will later be seen as a generalised exchange relation:

\begin{lemma}\label{carac}
We have 
\begin{equation}\label{caracl}\begin{array}{ll}\chi_\varepsilon(L(Y_0Y_2Y_4\dots Y_{2(l-2)}))\chi_\varepsilon(L(Y_2Y_4\dots Y_{2(l-1)}))\\\quad= 
\chi_\varepsilon(L(Y_2Y_4\dots Y_{2(l-2)})) \cdot \chi_\varepsilon( z_1) +1 + \chi_\varepsilon(L(Y_2Y_4\dots Y_{2(l-2)}))^2.\end{array}\end{equation}
\end{lemma}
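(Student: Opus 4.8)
The plan is to push the ordinary $T$-system one step past the range where the specialised Kirillov--Reshetikhin modules remain irreducible, and then to compute by hand the $\varepsilon$-character of the single reducible module that appears. Write $A=\chi_\varepsilon(W_\varepsilon(l-1,1))$, $B=\chi_\varepsilon(W_\varepsilon(l-1,\varepsilon^2))$ and $C=\chi_\varepsilon(W_\varepsilon(l-2,\varepsilon^2))$ for the three simple characters occurring in \eqref{caracl}, so that the assertion becomes $A\,B=C\,\chi_\varepsilon(z_1)+1+C^2$. The naive $(l-1)$-th $T$-system relation would read $A\,B=\chi_\varepsilon(W_\varepsilon(l,1))\,C+1$; the whole content of the lemma is that the module $W_\varepsilon(l,1)=(W_{l,1}^{(1)})^{\mathrm{res}}_\varepsilon$ is no longer simple, and decomposes as $\chi_\varepsilon(W_\varepsilon(l,1))=\chi_\varepsilon(z_1)+C$.

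First I would record the displaced $T$-system relation at the level of $\varepsilon$-characters. The $\mathfrak{sl}_2$ $T$-system is a polynomial identity in $K_0(\mathcal C_q)$, hence an identity between its images under the injective map $\chi_q$; applying the substitution $q\mapsto\varepsilon$ of Section \ref{s226}, which by Frenkel--Mukhin carries $\chi_q(V)$ to $\chi_\varepsilon(V^{\mathrm{res}}_\varepsilon)$, gives $A\,B=\chi_\varepsilon(W_\varepsilon(l,1))\,C+1$. This step uses only that for $k\le l-1$ the specialisation $(W_{k,a}^{(1)})^{\mathrm{res}}_\varepsilon$ is the simple module $W_\varepsilon(k,\cdot)$, so that $A$, $B$ and $C$ are indeed the simple characters in the statement.

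It then remains to evaluate $\chi_\varepsilon(W_\varepsilon(l,1))$ by specialising the explicit formula \eqref{expA1} at $k=l$, $a=1$, $q=\varepsilon$, and imposing the periodicity $Y_{2l}=Y_0$. In the $Y_n$ notation the $l+1$ monomials of $\chi_q(W_{l,1}^{(1)})$ are indexed by $j=0,\dots,l$. The two extreme terms $j=0$ and $j=l$ become $Y_0Y_2\cdots Y_{2(l-1)}=\mathbf Y_1$ and $Y_2^{-1}\cdots Y_{2l}^{-1}=\mathbf Y_1^{-1}$, whose sum is $\chi_\varepsilon(z_1)$ by \eqref{foz1}. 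In each middle term $j=1,\dots,l-1$ the factor $Y_{2l}^{-1}=Y_0^{-1}$ in the negative tail cancels the leading $Y_0$ of the positive head; a direct comparison of heads and tails then identifies the $l-1$ surviving monomials with the $l-1$ monomials of $C=\chi_\varepsilon(W_\varepsilon(l-2,\varepsilon^2))$. Hence $\chi_\varepsilon(W_\varepsilon(l,1))=\chi_\varepsilon(z_1)+C$, and substituting into the relation of the previous paragraph yields $A\,B=(\chi_\varepsilon(z_1)+C)\,C+1=C\,\chi_\varepsilon(z_1)+C^2+1$, which is \eqref{caracl}.

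The one genuinely delicate point is the bookkeeping in the last paragraph: one must verify that after the collapse $Y_{2l}=Y_0$ the middle monomials reproduce those of $C$ with multiplicity exactly one, that no two of them coincide, and that none accidentally equals $\mathbf Y_1^{\pm1}$. This is precisely where the root-of-unity behaviour departs from the generic case: at generic $q$ the term $W_{l-2,aq^2}$ would pair with the irreducible $W_{l,a}$, whereas here the wrap-around $Y_{2l}=Y_0$ splits off the Frobenius pullback $z_1$. I would therefore carry out the index matching explicitly, as a telescoping identification of the positive heads and negative tails, rather than invoke the generic $T$-system alone; since $C$ is the multiplicity-free character of a genuine simple $\Uereslsl2$-module, the term-by-term equality forces the required distinctness and unit coefficients.
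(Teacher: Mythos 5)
Your proof is correct, and it takes a genuinely different (more structured) route than the paper's. The paper's own proof of Lemma \ref{carac} is a one-line brute-force verification: expand the product $\chi_\varepsilon(L(Y_0Y_2\dots Y_{2(l-2)}))\cdot\chi_\varepsilon(L(Y_2Y_4\dots Y_{2(l-1)}))$ using \eqref{expA1} and \eqref{foz1} and match it against the right-hand side. You instead specialise the generic $\mathfrak{sl}_2$ $T$-system at $k=l-1$ --- legitimate, since the $T$-system holds in $K_0(\mathcal{C}_q)$, $\chi_q$ is injective, the substitution $q\mapsto\varepsilon$ is a ring homomorphism on the relevant Laurent polynomial ring, and Frenkel--Mukhin's substitution theorem applies to the simple module $W_{l,1}^{(1)}$ of $\mathcal{C}_q$ --- and then isolate all the root-of-unity behaviour in the single identity
\begin{equation*}
\chi_\varepsilon\bigl(W_\varepsilon(l,1)\bigr)=\chi_\varepsilon(z_1)+\chi_\varepsilon\bigl(L(Y_2Y_4\dots Y_{2(l-2)})\bigr).
\end{equation*}
I checked your bookkeeping: in \eqref{expA1} at $k=l$, $a=1$, the term indexed $j\in\llbracket 1,l-1\rrbracket$ has head $Y_0Y_2\dots Y_{2(l-1-j)}$ and tail $Y_{2(l-j+1)}^{-1}\dots Y_{2l}^{-1}$, and after cancelling $Y_{2l}^{-1}=Y_0^{-1}$ against the head's $Y_0$ it coincides exactly with the term indexed $j-1$ in the character of $L(Y_2\dots Y_{2(l-2)})$, including the boundary cases $j=1$ and $j=l-1$ (for $l=2$ the unique middle term collapses to $1$, matching $C=1$). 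One remark: your final worry about whether the surviving monomials are pairwise distinct or avoid $\mathbf{Y}_1^{\pm 1}$ is unnecessary --- the term-by-term bijection between the two indexed families already forces equality of the Laurent polynomials, regardless of accidental coincidences among monomials. As for what each approach buys: the paper's computation is self-contained but requires multiplying two sums with of order $l^2$ cross terms; yours computes a single character (linear in $l$), reuses the already-established relations \eqref{tsyse}, and, since $\chi_\varepsilon$ is injective on $K_0(\mathcal{C}_\varepsilon)$, your character identity upgrades to $\lbrack W_\varepsilon(l,1)\rbrack=\lbrack z_1\rbrack+\lbrack L(Y_2\dots Y_{2(l-2)})\rbrack$ in the Grothendieck ring, which explains conceptually why the $(l-1)$-th step of the $T$-system deforms into the generalised exchange relation \eqref{caracl}: the coefficient $\lambda$ of the type $C_{l-1}$ exchange polynomial is precisely the Frobenius-pullback summand split off by the wrap-around $Y_{2l}=Y_0$.
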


\begin{proof}
This follows immediately from an explicit computation using formulas \eqref{expA1} and \eqref{foz1}. \hfill$\Box$

\end{proof}

\section{The cluster structure on \texorpdfstring{$K_0(\CZres)$}{Ko(CZres)} in type \texorpdfstring{$A_1$}{A1}}\label{s3}

We can now state the main theorem of this paper. Recall the generalised cluster algebra $\mathcal{A}_n$ defined in Section \ref{s213}, Definition \ref{dAn}. Let $R=R_{\varepsilon^{\mathbb Z}}$ be the Grothendieck ring of $\CZres$ for $\mathfrak g = \mathfrak {sl}_2$ and $\varepsilon$ as in \eqref{epsilon}.

\begin{theorem}\label{conjA1}
%For $l\geq 2$ and $\mathfrak{g}=\mathfrak{sl}_2$, %the Grothendieck ring $R$ of $\CZres$ is a generalised cluster algebra of type $C_{l-1}$. An initial seed is given by
%\begin{equation*}\xymatrix{ X_1 \ar[r]& X_2 \ar[r]&X_3 \ar[r] &\dots \ar[r] & X_{l-2} \ar[r]^2 & X_{l-1}              }\end{equation*}
%where  the initial cluster is given by \begin{equation}(X_k)_{k=0}^{l-1}=(\lbrack L(Y_0 Y_2 Y_4 \dots Y_{2(k-1)})\rbrack, k\in\llbracket 1, l-1\rrbracket),\end{equation} and coefficient $z_1= \displaystyle \lbrack L\left( \mathbf{Y}_{1,1} \right)\rbrack$, 
%with initial exchange polynomials  \begin{equation}\theta_r^0 (u,v)=u+v\mbox{ for }r\in\llbracket 1,l-2\rrbracket\mbox{, and }\theta_{l-1}^0(u,v)= u^2 + z_1 uv + v^2.\end{equation}
%This structure on $R$ is given by 
There exists a ring  isomorphism $\varphi:\mathcal{A}_{l-1}\rightarrow R$, such that
\begin{equation}\varphi (x_{2r,2d})=\lbrack L(Y_{2r+2}\dots Y_{2d-2})\rbrack\:(r,d\in\llbracket 0,l-1\rrbracket, |r-d|<l),\quad \varphi(\lambda)= \lbrack z_1\rbrack.\end{equation}
The $\mathbb{Z}$-basis $\mathcal{S}$ of $\mathcal{A}_{l-1}$ is mapped by $\varphi$ to the basis of  classes of standard modules in $R$. The $\mathbb{Z}$-basis $\mathcal{B}$ of $\mathcal{A}_{l-1}$ consisting in generalised cluster monomials (see (\ref{basisB})) is mapped to the basis $B$ of classes of simple modules in $R$.

\end{theorem}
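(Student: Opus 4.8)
The plan is to realise $\varphi$ first as an isomorphism of polynomial rings, and then to verify the three explicit assertions (the values on the $x_{ab}$, the value on $\lambda$, and the images of the two bases) one at a time. By Proposition \ref{genbasisA}, $\mathcal{A}_{l-1}$ is the free polynomial ring over $\mathbb{Z}$ on its $l$ small variables $x_{2r,2r+4}$ (one for each $\Theta$-orbit of a short diagonal of $\mathbf{P}_{2l}$). On the representation side, $R$ is the polynomial ring over $\mathbb{Z}$ in the $l$ fundamental classes $[L(Y_0)],\dots,[L(Y_{2l-2})]$, since these are algebraically independent generators of $K_0(\Cres)$ and $Y_{2l+n}=Y_n$. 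I would therefore \emph{define} $\varphi$ on generators by $\varphi(x_{2r,2r+4}):=[L(Y_{2r+2})]$; as this is a bijection between the two sets of free polynomial generators, $\varphi$ extends uniquely to a ring isomorphism $\mathcal{A}_{l-1}\xrightarrow{\sim}R$. Everything after this is a matter of identifying the images of distinguished elements.

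Next I would prove $\varphi(x_{2r,2d})=[L(Y_{2r+2}\cdots Y_{2d-2})]=[W_\varepsilon(d-r-1,\varepsilon^{2r+2})]$ by induction on $\mathrm{Card}\,\mathcal{O}_{2r,2d}$. The base case (cardinality $1$) is the definition of $\varphi$. For the inductive step, the standard exchange relation (\ref{gexc1}) of Proposition \ref{pCn}(1) rewrites a non-diameter variable in terms of two variables of strictly smaller orbit-cardinality; applying $\varphi$ and the inductive hypothesis turns it into exactly the three-term $T$-system relation (\ref{tsyse}) satisfied by the specialised Kirillov--Reshetikhin classes in the range $k\le l-2$. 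As the two recursions agree in shape and initial data, they produce the same element, which settles all non-diameter variables, and the same computation at the maximal cardinality sends the (unique per cluster) diameter to the maximal class $W_\varepsilon(l-1,\cdot)$.

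The delicate point, and what I expect to be the \textbf{main obstacle}, is the identification $\varphi(\lambda)=[z_1]$, since this is precisely where the root-of-unity phenomenon deviates from the classical picture. Here I would feed the values from the previous step into the generalised exchange relation (\ref{gexc2}) (equivalently, into the expression (\ref{lambda1}) of $\lambda$ in small variables); after applying $\varphi$ the resulting identity in $R$ must coincide with the non-standard character relation of Lemma \ref{carac}, and injectivity of $\chi_\varepsilon$ (Section \ref{s226}) then forces $\varphi(\lambda)=[z_1]$. For the Chebyshev refinement I would use that $z_1$ is the Frobenius pullback of the $2$-dimensional $L\mathfrak{sl}_2$-module and that $\chi(L(y_1^{k}))=S_k(y_1+y_1^{-1})$, whence $\varphi(S_k(\lambda))=S_k([z_1])=[L(\mathbf{Y}_{1,1}^{k})]$.

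Finally, for the two bases: a monomial in small variables maps under $\varphi$ to a product of fundamental classes, i.e. the class of a standard module, so $\varphi(\mathcal{S})$ is the standard-module basis of $R$. For $\mathcal{B}$, a cluster monomial $m\in\mathcal{M}_0$ maps to a product of the Kirillov--Reshetikhin classes attached to the diagonals of a single cluster; belonging to a common cluster is the same as forming a centrally symmetric non-crossing triangulation of $\mathbf{P}_{2l}$, which is exactly the non-crossing condition of Theorem \ref{KRA}. Hence $\varphi(m)$ is simple, and combining this with $\varphi(S_k(\lambda))=[L(\mathbf{Y}_{1,1}^{k})]$ and the decomposition Theorem \ref{decthm} shows that $\varphi(S_k(\lambda)\,m)$ is the class of a simple module; since Theorem \ref{KRA} lists every simple object of $\CZres$ exactly once in this form, $\varphi$ carries $\mathcal{B}$ bijectively onto the basis $B$ of simple classes. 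Besides the $\lambda\leftrightarrow z_1$ step, the other place requiring care is this translation of the combinatorial compatibility of Theorem \ref{KRA} into ``lying in a common cluster''.
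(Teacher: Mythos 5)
Your proposal is correct and follows essentially the same route as the paper: fix $\varphi$ on the $l$ small variables mapped to the fundamental classes (both sides being free polynomial rings by Proposition \ref{genbasisA} and Section \ref{s223}), propagate along the matching recursions \eqref{gexc1}/\eqref{tsyse}, identify $\varphi(\lambda)=\lbrack z_1\rbrack$ by comparing \eqref{rellambda} with Lemma \ref{carac}, and match $\mathcal{B}$ with $B$ via Theorem \ref{decthm}, the Tchebychev identity $\mathrm{Fr}^*(\lbrack V(k\varpi)\rbrack)=S_k(\lbrack z_1\rbrack)$, and the translation of the non-crossing condition of Theorem \ref{KRA} into cluster compatibility. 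Your additions — making the induction on $\mathrm{Card}\,\mathcal{O}_{ab}$ explicit and invoking the injectivity of $\chi_\varepsilon$ to pass from character identities to identities in $R$ — merely spell out steps the paper leaves implicit.
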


\begin{proof}

We established an isomorphism $\mathcal{A}_{l-1}\cong \mathbb{Z}\lbrack x_{2r-2,2r+2},\:r\in\llbracket 0,l-1\rrbracket\rbrack$ in Section \ref{s213} (Proposition \ref{genbasisA}). We also know from Section \ref{s223} that there is an isomorphism $R\cong \mathbb{Z}\lbrack \lbrack L(Y_{2k})\rbrack,\:k\in\llbracket 0,l-1\rrbracket \rbrack$. Therefore, we %now have two polynomial rings over $\mathbb{Z}$, both in $l$ variables. We
 may fix a ring isomorphism $\varphi$, which sends each variable $x_{2r-2,2r+2}$ to the class $\lbrack L(Y_{2r})\rbrack$ in the Grothendieck ring $R$. Clearly, $\varphi$ maps the basis $\mathcal S$ to the basis of classes of standard modules.

It is easy to deduce from Proposition \ref{pCn} that the cluster variables in $\mathcal{A}_n$ are built from the $x_{2r-2,2r+2}$ using relation (\ref{gexc1}), with $k=r-d$. On the other hand, \eqref{tsyse}  %Proposition \ref{cp} and Lemma \ref{carac} 
implies that the $\varepsilon$-characters of %restricted and specialised Kirillov-Reshetikhin modules 
the simple Kirillov-Reshetikhin modules in $R$ are built from the $\varepsilon$-characters of fundamental modules $\chi_\varepsilon(L(Y_{2r}))$ using the same relations. Thus $\varphi(x_{2r,2d})=\lbrack L(Y_{2r+2}\dots Y_{2d-2})\rbrack$. Moreover, comparing \eqref{rellambda} and \eqref{caracl}, we also get $\varphi(\lambda)=\lbrack z_1\rbrack$. %, only with $\chi_\varepsilon(z_1)$ instead of $\lambda$. Thus the cluster variables $x_{2r,2d}$ correspond, under $\varphi$, to the classes of the modules $L(Y_{2r+2}\dots Y_{2d-2})$, the standard exchange relations correpond to the $T$-system, and we obtain $\varphi(\lambda)=z_1$ by comparing \eqref{rellambda} and \eqref{caracl}. Therefore, the correspondence between the basis  $\mathcal{S}$ of $\mathcal{A}_{l-1}$ and the basis of the classes of standard modules in $R$ is immediately established.

Let us now move on to the correspondence between the bases $\mathcal B$ and $B$. We know from Theorem \ref{decthm} that the class of every simple module can be written as $F\cdot M$, where $M$ is %a tensor product of 
$l$-acyclic 
%modules
 and $F$ is the Frobenius pullback of the class of an irreducible $\mathfrak{sl}_2$-module. We know that $\lbrack z_1\rbrack$ corresponds to $\mathrm{Fr}^*(\lbrack V(\varpi)\rbrack)$, the Frobenius pullback of the 2-dimensional representation of $\mathfrak{sl}_2$. %However, $z_1^{\otimes k}$ is not $\mathrm{Fr}^*(V(k\varpi))$. In fact, 
It then follows from the classical theory of characters for $\mathfrak{sl}_2$ that $\mathrm{Fr}^*(\lbrack V(k\varpi)\rbrack)$ is the Tchebychev polynomial of the second kind $S_k(\lbrack z_1\rbrack)$. Therefore, the basis of classes of simple modules in $R$ consists of elements of the form $S_k(\lbrack z_1\rbrack)\cdot M$, where $M$ is the class of a tensor product of simple Kirillov-Reshetikhin modules that satisfy the condition from Theorem \ref{KRA}. This geometrical condition on diagonals of $\mathbf{P}_{2l}$ corresponds exactly to cluster variable compatibility in $\mathcal{A}_{l-1}$: indeed, recall that two cluster variables of  $\mathcal{A}_{l-1}$ are in the same cluster if and only if their attached diagonals %belong to a non-crossing triangulation of  
do not cross inside $\mathbf{P}_{2l}$. 
Therefore, Theorem \ref{decthm} 
 allows us to conclude that the image of the basis $\mathcal{B}$  under the isomorphism $\varphi$, is the basis $B$ of classes of simple modules in $R$. \hfill $\Box$

\end{proof}

%\section{Other types}

\section{Type \texorpdfstring{$A_2$}{A2}}\label{s4}%\label{s32}

%We have an additional result for type $A_2$, and prove it in Section \ref{s42} in the smallest case $l=2$.

\subsection{The case \texorpdfstring{$A_2$, $l=2$.}{A2,l=2.}}\label{s42}
We start by studying the Grothendieck ring of $\CZres$ for $\mathfrak g = \mathfrak{sl}_3$ and $l=2$, in terms of $\varepsilon$-characters. For any integer $n\in\mathbb Z$ and any vertex $i=1,2$ of the Dynkin diagram, we set
\begin{equation*}
Y_{i,n}:=Y_{i,\varepsilon^n},
\end{equation*}
where  we consider the second index modulo $4$. We also write
\begin{equation*}
\mathbf{Y}_1=Y_{1,0}Y_{1,2}\quad\mbox{and}\quad\mathbf{Y}_2=Y_{2,1}Y_{2,3}.
\end{equation*}
In this case, the simple modules of $\CZres$ are of the form $L(m)$, where $$m=Y_{1,0}^{a_{10}}Y_{1,2}^{a_{12}}Y_{2,1}^{a_{21}}Y_{2,3}^{a_{23}}\quad(a_{10},\dots,a_{23}\in\mathbb N).$$ %The following result holds, up to shifting of the indices.

\begin{lemma}\label{caracA2}
The Laurent polynomial $\chi_\varepsilon(L(Y_{1,0}))^k\chi_\varepsilon(L(Y_{1,0}Y_{2,1}))^\ell$ contains a unique dominant monomial. It follows that 
\begin{equation}
\chi_\varepsilon(L(Y_{1,0}))^k\chi_\varepsilon(L(Y_{1,0}Y_{2,1}))^\ell = \chi_\varepsilon(L(Y_{1,0}^{k+\ell}Y_{2,1}^\ell)).
\end{equation}
\end{lemma}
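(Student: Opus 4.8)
The plan is to split the statement into two parts: the combinatorial assertion that the product $\chi_\varepsilon(L(Y_{1,0}))^k\chi_\varepsilon(L(Y_{1,0}Y_{2,1}))^\ell$ has a unique dominant monomial, and the deduction of the stated character identity from that assertion. I would dispatch the deduction first, since it is purely formal. Because $\chi_\varepsilon$ is an injective ring homomorphism (Section \ref{s226}), it suffices to establish the identity $[L(Y_{1,0})]^k[L(Y_{1,0}Y_{2,1})]^\ell=[L(Y_{1,0}^{k+\ell}Y_{2,1}^\ell)]$ in the Grothendieck ring. The left-hand side is the class of a genuine tensor product, so it decomposes in the $\mathbb{Z}$-basis of simple classes with nonnegative integer coefficients, say $\sum_j c_j[L(n_j)]$ with $c_j>0$. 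Applying $\chi_\varepsilon$ and using that each highest weight monomial $n_j$ is dominant and occurs (with positive coefficient) in $\chi_\varepsilon(L(n_j))$, one sees that every $n_j$ survives as a dominant monomial of the product; by the uniqueness assertion they all equal $M:=Y_{1,0}^{k+\ell}Y_{2,1}^\ell$, so the sum collapses to a single term $c\,[L(M)]$. Finally $M$ is also the maximal $\varepsilon$-weight monomial of the product, attained only by multiplying the top monomials of the factors, hence with coefficient $1$; comparing this with the $M$-coefficient $c$ of $\chi_\varepsilon(L(M))$ forces $c=1$, which yields the identity.

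The first part is the real content. I would compute the two $\varepsilon$-characters explicitly by specialising the known type-$A_2$ $q$-characters at $q=\varepsilon$, reading the second index modulo $4$ (as $l=2$ gives $\varepsilon^4=1$). This produces $\chi_\varepsilon(L(Y_{1,0}))=Y_{1,0}+Y_{1,2}^{-1}Y_{2,1}+Y_{2,3}^{-1}$, whose only dominant monomial is $Y_{1,0}$, and a similarly short expansion for $\chi_\varepsilon(L(Y_{1,0}Y_{2,1}))$, whose only dominant monomial is $Y_{1,0}Y_{2,1}$. I would then examine which products of monomials, one chosen from each of the $k+\ell$ factors, can be dominant. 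Writing each chosen monomial as the factor's highest weight monomial times a nonnegative product of the inverse lowering monomials $A_{i,b}^{-1}$ from the $q$-character theory, the product takes the shape $M\cdot\prod_{i,b}A_{i,b}^{-w_{i,b}}$ with all $w_{i,b}\ge 0$ and $M$ dominant; the task is to show that overall dominance forces every $w_{i,b}=0$, i.e. that the only dominant monomial is $M$ itself.

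The main obstacle is precisely this last step, namely ruling out nontrivial cancellations: in general a simple module may have several dominant monomials, so uniqueness is a property special to these factors and their product, not a structural automatism. The negative variables available are $Y_{1,2}^{-1}$ and $Y_{2,3}^{-1}$ together with their analogues in the second character, and I must verify that any choice introducing such a factor leaves a surviving negative exponent that cannot be repaired by the other factors. Since $l=2$ confines the relevant indices to $\{0,1,2,3\}$ and both characters are short, this is a finite and explicit verification rather than a conceptual one; the work lies in organising the case analysis cleanly, for instance by tracking the total exponents of $Y_{1,2}$ and $Y_{2,3}$ and showing they stay nonnegative only when every factor contributes its highest weight monomial. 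Once uniqueness is in hand, the formal deduction above completes the proof.
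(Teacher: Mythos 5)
Your proposal is correct and follows essentially the same route as the paper: compute the two $\varepsilon$-characters explicitly (your expansion of $\chi_\varepsilon(L(Y_{1,0}))$ matches the paper's, and the paper gives the seven-term expansion of $\chi_\varepsilon(L(Y_{1,0}Y_{2,1}))$ in \eqref{A22}), then verify by a finite check---exactly your observation that no monomial in either character carries a positive power of $Y_{1,2}$ or $Y_{2,3}$, so those exponents can never be repaired---that the only dominant product monomial is $Y_{1,0}^{k+\ell}Y_{2,1}^{\ell}$. The only difference is that you spell out the standard deduction (positivity of the decomposition into simple classes plus injectivity of $\chi_\varepsilon$ forcing the collapse to a single simple character with coefficient $1$), which the paper treats as immediate.
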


\begin{proof}
It is easy to compute the following $\varepsilon$-characters:
\begin{equation}\label{A22}\begin{array}{lll}
\chi_\varepsilon(L(Y_{1,0}))&=&  Y_{1,0} + Y_{2,1}Y_{1,2}^{-1} + Y_{2,3}^{-1}\\
\chi_\varepsilon(L(Y_{1,0}Y_{2,1}))& =& Y_{1,0}Y_{2,1} + Y_{1,0}Y_{1,2}Y_{2,3}^{-1} + Y_{2,1}^2 Y_{1,2}^{-1} + 2 Y_{2,1}Y_{2,3}^{-1}\\&&\quad + Y_{2,1}Y_{1,0}^{-1}Y_{1,2}^{-1} + Y_{1,2}Y_{2,3}^{-2} + Y_{1,0}^{-1} Y_{2,3}^{-1}.
\end{array}
\end{equation}
One can directly check that the only way to obtain a dominant monomial by multiplying terms of the sums above, is to involve only $Y_{1,0}$ and $Y_{1,0}Y_{2,1}$. Thus the only dominant monomial of $\chi_\varepsilon(L(Y_{1,0}))^k\chi_\varepsilon(L(Y_{1,0}Y_{2,1}))^\ell$ is $Y_{1,0}^{k+\ell}Y_{2,1}^\ell$. Therefore, the $\varepsilon$-character  $\chi_\varepsilon(L(Y_{1,0}))^k\chi_\varepsilon(L(Y_{1,0}Y_{2,1}))^\ell$coincides with $\chi_\varepsilon(L(Y_{1,0}^{k+\ell}Y_{2,1}^\ell))$.  %The wanted result follows immediately, since a module whose $\varepsilon$-character has only one dominant monomial, is irreducible. 
\hfill $\Box$
\end{proof}

Theorem \ref{decthm} allows us to deduce the following property.

\begin{theorem}\label{decG}
Any simple finite-dimensional $U_\varepsilon^{\mathrm{res}}(L\mathfrak{sl}_3)$-module $L(m)$ can be written
\begin{equation*}
L(m)= L(\mathbf{Y}_{1}^k\mathbf{Y}_{2}^\ell)\otimes L(m^0),
\end{equation*}
where $L(m^0)$ is one of the following eight tensor products:
\begin{equation*}
\begin{array}{rllrl}
(i)&L(Y_{1,0})^{\otimes a}\otimes L(Y_{1,0}Y_{2,1})^{\otimes b} & & (ii)&L(Y_{2,1})^{\otimes a}\otimes L(Y_{1,0}Y_{2,1})^{\otimes b}     \\
(iii)&L(Y_{1,0})^{\otimes a}\otimes L(Y_{1,0}Y_{2,3})^{\otimes b} & &(iv)&L(Y_{2,3})^{\otimes a}\otimes L(Y_{1,0}Y_{2,3})^{\otimes b}      \\
(v)&L(Y_{1,2})^{\otimes a}\otimes L(Y_{1,2}Y_{2,1})^{\otimes b} &  & (vi)&L(Y_{2,1})^{\otimes a}\otimes L(Y_{1,2}Y_{2,1})^{\otimes b}    \\
(vii)&L(Y_{1,2})^{\otimes a}\otimes L(Y_{1,2}Y_{2,3})^{\otimes b} & & (viii)& L(Y_{2,3})^{\otimes a}\otimes L(Y_{1,2}Y_{2,3})^{\otimes b}  .    \\
\end{array}
\end{equation*}
\end{theorem}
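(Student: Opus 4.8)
The plan is to combine the decomposition theorem (Theorem \ref{decthm}) with a case analysis on the shape of the $l$-acyclic part, and to identify each $l$-acyclic factor with one of the eight tensor products by means of character identities of the type established in Lemma \ref{caracA2}.

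First I would write a dominant monomial as $m = Y_{1,0}^{a_{10}}Y_{1,2}^{a_{12}}Y_{2,1}^{a_{21}}Y_{2,3}^{a_{23}}$ and recall that in $\CZres$ (with $l=2$) the only monomials in the bold variables are the products $\mathbf{Y}_1^k\mathbf{Y}_2^\ell$, where $\mathbf{Y}_1 = Y_{1,0}Y_{1,2}$ and $\mathbf{Y}_2 = Y_{2,1}Y_{2,3}$. Hence the factorisation $m = m^0m^1$ of Theorem \ref{decthm} is forced: one must have $m^1 = \mathbf{Y}_1^k\mathbf{Y}_2^\ell$ with $k = \min(a_{10},a_{12})$ and $\ell = \min(a_{21},a_{23})$, and
\[
m^0 = Y_{1,0}^{a_{10}-k}Y_{1,2}^{a_{12}-k}Y_{2,1}^{a_{21}-\ell}Y_{2,3}^{a_{23}-\ell}.
\]
Theorem \ref{decthm} then yields $L(m) \cong L(m^0)\otimes L(\mathbf{Y}_1^k\mathbf{Y}_2^\ell)$, so the whole problem reduces to describing $L(m^0)$ for a $2$-acyclic $m^0$.

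Next I would observe that $2$-acyclicity forces $\min(a_{10}-k,a_{12}-k)=0$ and $\min(a_{21}-\ell,a_{23}-\ell)=0$, so that $m^0 = Y_{1,s}^{p}Y_{2,t}^{q}$ with $s\in\{0,2\}$, $t\in\{1,3\}$ (and $p$ or $q$ possibly zero). This leaves four possible supports $(s,t)$, and within each support two subcases according to whether $p\geq q$ or $q\geq p$; this is exactly the source of the eight cases. For the support $(0,1)$ with $p\geq q$, Lemma \ref{caracA2} (applied with the two exponents equal to $p-q$ and $q$) shows that the product $\chi_\varepsilon(L(Y_{1,0}))^{p-q}\chi_\varepsilon(L(Y_{1,0}Y_{2,1}))^{q}$ has a single dominant monomial, equal to $m^0$. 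Since $\chi_\varepsilon$ is an injective ring morphism and a product of simple $\varepsilon$-characters with a unique dominant monomial is the $\varepsilon$-character of a simple module, the tensor product $L(Y_{1,0})^{\otimes(p-q)}\otimes L(Y_{1,0}Y_{2,1})^{\otimes q}$ is irreducible and isomorphic to $L(m^0)$, which is case $(i)$; the subcase $q\geq p$ gives case $(ii)$ in the same way from the node-exchanged identity.

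The main work is to produce the character identities for the remaining supports $(0,3)$, $(2,1)$, $(2,3)$ and for their node-exchanged versions. I would handle these either by a direct computation of the relevant $\varepsilon$-characters, exactly as in the proof of Lemma \ref{caracA2}, or, more economically, by transporting Lemma \ref{caracA2} through the evident symmetries of the index set: the spectral shift $Y_{i,n}\mapsto Y_{i,n+2}$, the sign reversal $Y_{i,n}\mapsto Y_{i,-n}$, and the diagram involution $Y_{1,n}\mapsto Y_{2,n+1}$, $Y_{2,n}\mapsto Y_{1,n-1}$. The shift and the sign reversal permute the four supports transitively, while the diagram involution fixes $Y_{1,0}Y_{2,1}$ and exchanges the two fundamental nodes; together they reduce all eight identities to the single one of Lemma \ref{caracA2}. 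The point requiring care, and the step I expect to be the main obstacle, is to check that these maps are genuine ring automorphisms of $K_0(\CZres)$ that stabilise $\CZres$ and send classes of simple modules to classes of simple modules, so that the unique-dominant-monomial criterion is transported correctly; once this is verified, collecting the subcases above gives the eight cases of the statement.
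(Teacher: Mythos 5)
Your proposal is correct and takes essentially the same route as the paper: Theorem \ref{decthm} plus the observation that $2$-acyclicity forces $a_{10}a_{12}=0$ and $a_{21}a_{23}=0$, reducing to eight cases, each settled by the unique-dominant-monomial argument of Lemma \ref{caracA2}. The paper simply declares the remaining seven cases identical (``the proof is the same for all eight situations'') and checks only case $(i)$, so the direct computations you offer as your first alternative are exactly what the paper does, and the symmetry-transport machinery --- whose verification you rightly flag as the delicate step --- is an optional shortcut the paper never needs.
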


\begin{proof}
 In this case, any $l$-acyclic dominant monomial $m^0$ is of the form   $$m^0=Y_{1,0}^{a_{10}}Y_{1,2}^{a_{12}}Y_{2,1}^{a_{21}}Y_{2,3}^{a_{23}},$$ with $a_{10}a_{12}=0$ and $a_{21}a_{23}=0$. The proof is the same for all eight situations, so we only check $(i)$.
 Case $(i)$ corresponds to $a_{12}=a_{23}=0$ and $a_{10}\geq a_{21}$. Then we have $m^0= (Y_{1,0})^{a_{10}-a_{21}} (Y_{1,0}Y_{2,1})^{a_{21}},$ and we deduce from Lemma \ref{caracA2} that %\begin{equation*}
$
L(m^0)=  L(Y_{1,0})^{\otimes a_{10}-a_{21}}\otimes L(Y_{1,0}Y_{2,1})^{\otimes a_{21}}.
$
%\end{equation*}
\hfill $\Box$
\end{proof}

%For $l=2$, we exhibit a generalised cluster algebra of type $G_2$. 

The modules $L(m)$ where $m$ is $l$-acyclic satisfy some interesting relations.

\begin{proposition}\label{relGc}
The following identities hold, for $i\in\{0,2\}$ and $j\in\{1,3\}$.
\begin{equation}
\begin{array}{ll}
\chi_\varepsilon(L(Y_{1,i}))\chi_\varepsilon(L(Y_{2,j}))&=\chi_\varepsilon(L(Y_{1,i}Y_{2,j})) +1 \\
\chi_\varepsilon(L(Y_{1,i}Y_{2,1}))\chi_\varepsilon(L(Y_{1,i}Y_{2,3}))&=  \chi_\varepsilon(L(Y_{1,i}))^3 + \chi_\varepsilon(L(Y_{1,i}))^2 \chi_\varepsilon(L(\mathbf{Y}_{2})) \\&\quad+ \chi_\varepsilon(L(Y_{1,i})) \chi_\varepsilon(L(\mathbf{Y}_{1})) + 1 \\
\chi_\varepsilon(L(Y_{1,0}Y_{2,j}))\chi_\varepsilon(L(Y_{1,2}Y_{2,j}))&= \chi_\varepsilon(L(Y_{2,j}))^3 + \chi_\varepsilon(L(Y_{2,j}))^2 \chi_\varepsilon(L(\mathbf{Y}_{1})) \\&\quad+ \chi_\varepsilon(L(Y_{2,j})) \chi_\varepsilon(L(\mathbf{Y}_{2})) + 1. 
\end{array}
\end{equation}
\end{proposition}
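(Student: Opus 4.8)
The plan is to exploit that the $\varepsilon$-character map $\chi_\varepsilon$ is an \emph{injective ring homomorphism} (Section \ref{s226}, following \cite{FM}), so that each claimed identity is equivalent to an equality of Laurent polynomials, or equivalently to a decomposition of a product of classes of simple modules in the Grothendieck ring into a $\mathbb{Z}_{\geq 0}$-combination of classes of simples. First I would assemble the building blocks: the $\varepsilon$-characters of the four fundamental modules $L(Y_{1,0}),L(Y_{1,2}),L(Y_{2,1}),L(Y_{2,3})$ (each of total dimension $3$), computed as in \eqref{A22} and its analogues obtained by the spectral shift $n\mapsto n+2$ and the Dynkin diagram automorphism $1\leftrightarrow 2$; the $8$-term characters of the adjoint-type modules $L(Y_{1,i}Y_{2,j})$ already recorded in \eqref{A22}; and the characters of the pullback modules $L(\mathbf{Y}_1),L(\mathbf{Y}_2)$, read off from the $\mathfrak{sl}_3$ fundamental characters via the Frobenius rule (Proposition \ref{frob} and Section \ref{s227}), i.e. by substituting $\mathbf{Y}_i$ for $y_i$.

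For the first identity I would simply peel off the highest dominant monomial. The product $\chi_\varepsilon(L(Y_{1,i}))\chi_\varepsilon(L(Y_{2,j}))$ is a positive Laurent polynomial of total dimension $9$ whose unique maximal dominant monomial is $Y_{1,i}Y_{2,j}$; subtracting $\chi_\varepsilon(L(Y_{1,i}Y_{2,j}))$, which has dimension $8$, leaves a positive polynomial of dimension $1$, and one checks it is the constant monomial, i.e. $\chi_\varepsilon(\mathbf{1})$. This gives $[L(Y_{1,i})][L(Y_{2,j})]=[L(Y_{1,i}Y_{2,j})]+[\mathbf{1}]$.

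For the two cubic identities the same peeling procedure applies, and its successive steps are exactly what produces the shape of the right-hand side. For the second relation, the product $[L(Y_{1,i}Y_{2,1})][L(Y_{1,i}Y_{2,3})]$ has maximal dominant monomial $Y_{1,i}^2\mathbf{Y}_2$; by Theorem \ref{decthm} together with Lemma \ref{caracA2} one identifies the corresponding simple as $[L(Y_{1,i})]^2[L(\mathbf{Y}_2)]$. Peeling it off and repeating produces in turn $[L(Y_{1,i})]^3=[L(Y_{1,i}^3)]$, then $[L(Y_{1,i})][L(\mathbf{Y}_1)]=[L(Y_{1,i}\mathbf{Y}_1)]$, and finally the constant $[\mathbf{1}]$; here each identification of a simple with a product of prime classes uses Lemma \ref{caracA2} (for powers of a single fundamental) and Theorem \ref{decthm} (to split off the pullback factor). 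The dimension bookkeeping $8\times 8=27+27+9+1$ then confirms that no further simples can occur. The third identity is obtained from the second by applying the automorphism $1\leftrightarrow 2$ and the relabeling $0\leftrightarrow 1,\ 2\leftrightarrow 3$ of spectral parameters, which exchanges the roles of $\mathbf{Y}_1$ and $\mathbf{Y}_2$; alternatively it follows from the identical peeling argument with the node indices swapped.

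I expect the main obstacle to be the explicit verification that, at each stage of the peeling, the remainder has nonnegative coefficients and that \emph{all} non-dominant monomials cancel exactly, not merely the dominant ones. Because the cubic products expand into $64$ monomials with multiplicities, this is a lengthy but mechanical computation (of the kind carried out with Maple in this paper); the conceptual content is entirely contained in the peeling order above and in the two structural inputs, Lemma \ref{caracA2} and Theorem \ref{decthm}.
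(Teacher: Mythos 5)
Your proposal is correct, and its computational core is the same as the paper's, but you organize the verification differently. The paper's proof is a pure brute-force check: it lists the missing $\varepsilon$-characters ($\chi_\varepsilon(L(Y_{2,1}))$, $\chi_\varepsilon(L(\mathbf{Y}_1))$, $\chi_\varepsilon(L(\mathbf{Y}_2))$, alongside \eqref{A22}), expands both sides of each identity as explicit Laurent polynomials, and compares --- no positivity, no Theorem \ref{decthm}, no Lemma \ref{caracA2} is invoked, since every term on the right-hand side is already an explicit product of known characters. Your dominant-monomial peeling instead \emph{derives} the right-hand side: you use that the product $[L(m_1)][L(m_2)]$ is the class of an actual tensor product, hence a $\mathbb{Z}_{\geq 0}$-combination of simples, and you identify the successive composition factors $L(Y_{1,i}^2\mathbf{Y}_2)$, $L(Y_{1,i}^3)$, $L(Y_{1,i}\mathbf{Y}_1)$, $\mathbf{1}$ via Theorem \ref{decthm} and Lemma \ref{caracA2}, with the dimension count $8\times 8=27+27+9+1$ as a consistency check. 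This buys more than the paper's argument does: it exhibits each identity as the decomposition of the tensor product into simple constituents (which is exactly the structural fact the subsequent cluster-theoretic interpretation rests on), at the cost of needing the auxiliary inputs to compute the peeled characters and of the care you rightly flag about exact cancellation of non-dominant monomials at each stage --- though note that positivity of each remainder is automatic from composition multiplicities, so the only genuinely mechanical step is matching the dominant monomials, which is finite and unproblematic here. One small point to make explicit: Lemma \ref{caracA2} is stated only for $Y_{1,0}$, so your uses of it for $Y_{1,2}$ and for node $2$ need the spectral-shift and diagram-automorphism symmetries you invoke for the third identity; that is standard but should be said once.
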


\begin{proof}
In addition to the expressions of \eqref{A22}, we have the following formulas:  %(up to shifting of the second index):
\begin{equation}
\begin{array}{ll}
%\chi_\varepsilon(L(Y_{1,2}))&=  Y_{1,2} + Y_{2,3}Y_{1,0}^{-1} + Y_{2,1}^{-1}\\
\chi_\varepsilon(L(Y_{2,1})) &= Y_{2,1} + Y_{1,2}Y_{2,3}^{-1} + Y_{1,0}^{-1}\\
%\chi_\varepsilon(L(Y_{2,3})) &= Y_{2,3} + Y_{1,0}Y_{2,1}^{-1} + Y_{1,2}^{-1}\\
\chi_\varepsilon(L(\mathbf{Y}_{1}))& = Y_{1,0}Y_{1,2} + Y_{2,1}Y_{2,3}Y_{1,0}^{-1}Y_{1,2}^{-1}+ Y_{2,1}^{-1}Y_{2,3}^{-1}\\
\chi_\varepsilon(L(\mathbf{Y}_{2})) &= Y_{2,1}Y_{2,3}+ Y_{1,0}Y_{1,2}Y_{2,1}^{-1}Y_{2,3}^{-1} + Y_{1,0}^{-1}Y_{1,2}^{-1}.
\end{array}
\end{equation}
All the relations can then be obtained by straightforward computations. \hfill $\Box$
\end{proof}

Just like for $\mathfrak{sl}_2$, Section \ref{s223} implies that the Grothendieck ring $R:=K_0(\CZres)$ for $\mathfrak g = \mathfrak{sl}_3,\: l=2,$ is isomorphic to the polynomial ring:
\begin{equation}\label{isocarG}
R \cong \mathbb{Z} \lbrack  \lbrack L( Y_{1,0}) \rbrack, \lbrack L( Y_{1,2})\rbrack, \lbrack L( Y_{2,1})\rbrack, \lbrack L( Y_{2,3})\rbrack       \rbrack.
\end{equation}

%\begin{remark}\label{remG}
Recall that for $\mathfrak g = \mathfrak{sl}_3$, the Grothendieck ring of the finite-dimensional representations of $\mathfrak g$ is isomorphic to the polynomial ring $\mathbb Z\lbrack \lbrack V(\varpi_1)\rbrack, \lbrack V(\varpi_2)\rbrack %\rbrack\cong \mathbb Z\lbrack \mathbf{y}_1,\mathbf{y}_2\rbrack
$. In this case, the simple modules are of the form $V(a_1\varpi_1+a_2\varpi_2)$ and can be written as polynomials in $\lbrack V(\varpi_1)\rbrack,\lbrack V(\varpi_2)\rbrack$, which creates a 2-parameter family of polynomials in 2 variables, denoted by $S_{a_1,a_2}(\lbrack V(\varpi_1)\rbrack,\lbrack V(\varpi_2)\rbrack)$. These polynomials can be computed inductively using the Littlewood-Richardson rule. The Frobenius pullback maps the class $\lbrack V(a_1\varpi_1+a_2\varpi_2)\rbrack $ to the class $\lbrack L(\mathbf{Y}_{1}^{a_1} \mathbf{Y}_{2}^{a_2})\rbrack$, which can then be written as the polynomial $S_{a_1,a_2}(\lbrack L(\mathbf{Y}_{1})\rbrack ,\lbrack L(\mathbf{Y}_{2})\rbrack)$.

%\end{remark}

Let $\overline{\mathcal{G}}$ be the generalised cluster algebra of type $G_2$, with $\mathbb{P}=\mathrm{Trop}(\lambda_1,\lambda_2)$, initial cluster variables $x_1,x_2$, exchange matrix 
\begin{equation*}B=\left( \begin{array}{cc}
0&3\\
-1&0
\end{array}    \right),    \end{equation*}
and initial exchange polynomials 
\begin{equation}\theta_1^0 (u,v)=u+v, \qquad \theta_{2}^0(u,v)= u^3 + \lambda_1 u^2 v + \lambda_2uv^2+ v^3.\end{equation}
There are eight cluster variables $x_1,\dots,x_8$, organised in eight clusters, as in Figure \ref{ex:g2}. Note that the exchange polynomials are not affected by mutation.
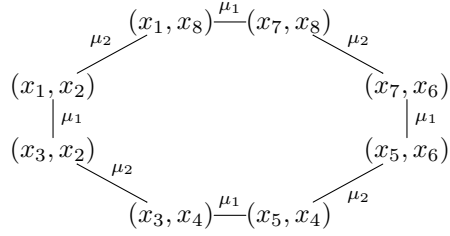
\begin{figure}[!ht]
\begin{equation*}\xymatrix  @R=1.2pc @C=1pc @M=0.0pc    {& (x_1,x_8)\ar@{-}[r]^{\mu_1} & (x_7,x_8)\ar@{-}[dr]^{\mu_2}&\\
(x_1,x_2)\ar@{-}[ur]^{\mu_2}\ar@{-}[d]^{\mu_1} &&&(x_7,x_6)\ar@{-}[d]^{\mu_1}\\
(x_3,x_2)\ar@{-}[dr]^{\mu_2}&&&(x_5,x_6)\ar@{-}[dl]^{\mu_2}\\
&(x_3,x_4)\ar@{-}[r]^{\mu_1} &(x_5,x_4)
}\end{equation*}
\caption{The clusters in type $G_2$}\label{ex:g2}
\end{figure}

Let $\mathcal G$ be the $\mathbb Z\lbrack \lambda_1,\lambda_2\rbrack$-subalgebra of the ambient field $\mathcal F$ generated by the cluster variables of $\overline{\mathcal{G}}$. 
As for $\mathcal{A}_n$, one can also check that $\mathcal G$ is isomorphic to a polynomial ring:
\begin{equation}\label{isoG}
\mathcal G \cong \mathbb Z \lbrack x_1,x_3,x_5,x_7\rbrack.
\end{equation}
Denote by $\mathcal M_0$ the set of generalised cluster monomials of $\mathcal G$% that do not contain powers of $\lambda_1$ or $\lambda_2$
. Then the set
\begin{equation}
\mathcal H := \{ S_{a_1,a_2}(\lambda_1,\lambda_2)\cdot m,\:a_1,a_2\in\mathbb N,\:m\in\mathcal{M}_0\}
\end{equation}
is a $\mathbb Z$-basis of $\mathcal G$. 
%
%Let $\mathcal H$ be the $\mathbb Z$-basis of $\mathcal G$ consisting in cluster 
Let us now exhibit a cluster structure on $R$.% in this case. % for $\mathfrak g = \mathfrak{sl}_2$.

\begin{theorem}\label{thmA2l2}
For $l=2$ and $\mathfrak g = \mathfrak{sl}_3$, there exists a ring isomorphism $\eta :\mathcal G \rightarrow R$ such that
\begin{equation}\label{isoGcorr}
\begin{array}{ll}
\eta(x_1)= \lbrack L( Y_{1,0}) \rbrack      & \eta(x_2)=  \lbrack L( Y_{1,0}Y_{2,3}) \rbrack       \\
\eta(x_3)= \lbrack L( Y_{2,3}) \rbrack      & \eta(x_4)= \lbrack L( Y_{1,2}Y_{2,3}) \rbrack        \\
\eta(x_5)= \lbrack L( Y_{1,2}) \rbrack      & \eta(x_6)= \lbrack L( Y_{1,2}Y_{2,1}) \rbrack         \\
\eta(x_7)= \lbrack L( Y_{2,1}) \rbrack     & \eta(x_8)= \lbrack L( Y_{1,0}Y_{2,1}) \rbrack        \\
\eta(\lambda_1)= \lbrack L(\mathbf{Y}_{1})\rbrack      & \eta(\lambda_2)=  \lbrack L(\mathbf{Y}_{2})\rbrack      \\
\end{array}
\end{equation}
The $\mathbb Z$-basis $\mathcal E\subset\mathcal G$ of monomials in $x_1,x_3,x_5,x_7$ is mapped by $\eta$ to the basis of classes of standard modules in $R$. Moreover, the $\mathbb Z$-basis $\mathcal H$ of generalised cluster monomials of $\mathcal G$ is mapped to the basis $B$ of classes of simple modules in $R$.
\end{theorem}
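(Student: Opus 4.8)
The plan is to mirror the proof of Theorem \ref{conjA1}, exploiting the fact that both $\mathcal{G}$ and $R$ have already been identified with polynomial rings in four generators. Specifically, by \eqref{isoG} we have $\mathcal{G}\cong\mathbb{Z}[x_1,x_3,x_5,x_7]$, and by \eqref{isocarG} we have $R\cong\mathbb{Z}[[L(Y_{1,0})],[L(Y_{1,2})],[L(Y_{2,1})],[L(Y_{2,3})]]$. So I would first define $\eta$ on generators by $\eta(x_1)=[L(Y_{1,0})]$, $\eta(x_3)=[L(Y_{2,3})]$, $\eta(x_5)=[L(Y_{1,2})]$, $\eta(x_7)=[L(Y_{2,1})]$, matching the four odd-indexed cluster variables to the four fundamental classes. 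Since these are the free generators on both sides, this immediately extends to a ring isomorphism $\eta:\mathcal{G}\rightarrow R$, and by construction $\eta$ sends the monomial basis $\mathcal{E}$ to the basis of standard modules (products of fundamental classes). This disposes of the first assertion of the theorem.

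Next I would verify that $\eta$ sends the remaining data of the statement correctly, i.e. the even-indexed cluster variables $x_2,x_4,x_6,x_8$ and the coefficients $\lambda_1,\lambda_2$. The key point is that the exchange relations in $\overline{\mathcal{G}}$ (read off from the $G_2$ exchange matrix $B$ and the polynomials $\theta_1^0,\theta_2^0$, which are unaffected by mutation) must match the representation-theoretic relations of Proposition \ref{relGc}. Concretely, the mutation relation $\theta_1^0(u,v)=u+v$ at direction $1$ yields $x_1x_8=x_2+1$-type relations, whose image under $\eta$ should be the first family of identities in Proposition \ref{relGc}, namely $\chi_\varepsilon(L(Y_{1,i}))\chi_\varepsilon(L(Y_{2,j}))=\chi_\varepsilon(L(Y_{1,i}Y_{2,j}))+1$; this forces $\eta(x_2)=[L(Y_{1,0}Y_{2,3})]$ and the analogous identifications for $x_4,x_6,x_8$. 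The cubic exchange polynomial $\theta_2^0(u,v)=u^3+\lambda_1u^2v+\lambda_2uv^2+v^3$ in direction $2$ must match the second and third families of relations in Proposition \ref{relGc}; comparing the coefficients of the mixed terms forces $\eta(\lambda_1)=[L(\mathbf{Y}_1)]$ and $\eta(\lambda_2)=[L(\mathbf{Y}_2)]$. Since $\eta$ is already known to be a ring isomorphism, checking these identifications reduces to confirming that the stated image classes satisfy exactly the same polynomial relations, which is a finite computation made possible by Proposition \ref{relGc} and Lemma \ref{caracA2}.

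For the final and most delicate assertion, that the basis $\mathcal{H}$ of generalised cluster monomials maps to the basis $B$ of classes of simple modules, I would follow the decomposition strategy of Theorem \ref{conjA1}. By Theorem \ref{decG}, every simple $L(m)$ factors as $L(\mathbf{Y}_1^k\mathbf{Y}_2^\ell)\otimes L(m^0)$ with $L(m^0)$ one of the eight listed $l$-acyclic tensor products. The Frobenius pullback (Proposition \ref{frob}) sends $[V(a_1\varpi_1+a_2\varpi_2)]$ to $[L(\mathbf{Y}_1^{a_1}\mathbf{Y}_2^{a_2})]=S_{a_1,a_2}([L(\mathbf{Y}_1)],[L(\mathbf{Y}_2)])$, which under $\eta^{-1}$ is exactly the polynomial $S_{a_1,a_2}(\lambda_1,\lambda_2)$ appearing in the definition of $\mathcal{H}$. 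The $l$-acyclic factor $L(m^0)$, by Lemma \ref{caracA2} and Theorem \ref{decG}, corresponds to a generalised cluster monomial: each of the eight cases of Theorem \ref{decG} is a product of the form (fundamental)$^{\otimes a}\otimes$(two-factor)$^{\otimes b}$ supported on a single cluster of Figure \ref{ex:g2}, precisely because the compatibility condition $a_{10}a_{12}=0$, $a_{21}a_{23}=0$ selects one of the eight clusters. I would therefore check that the eight $l$-acyclic families of Theorem \ref{decG} biject with the eight clusters $(x_1,x_2),(x_3,x_2),\ldots$ of $\mathcal{G}$, so that $\eta(\mathcal{M}_0)$ is exactly the set of $l$-acyclic simple classes.

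The main obstacle I anticipate is this last bijection between the eight clusters of $\mathcal{G}$ and the eight $l$-acyclic cases of Theorem \ref{decG}: one must verify not only that the pairs of cluster variables in each cluster map to the correct pair of prime modules, but that arbitrary cluster \emph{monomials} $x_i^a x_j^b$ (with $x_i,x_j$ in a common cluster) map to the simple class of the corresponding tensor product, which relies crucially on Lemma \ref{caracA2} to guarantee that the product of $\varepsilon$-characters has a unique dominant monomial and hence is itself the $\varepsilon$-character of a single simple module. Once the multiplicativity furnished by Lemma \ref{caracA2} is in hand, combined with the freeness of $\mathcal{H}$ as a $\mathbb{Z}$-basis of $\mathcal{G}$ and the established bijection, the conclusion that $\eta(\mathcal{H})=B$ follows. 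The remaining steps are the routine verification of the coefficient identifications against Proposition \ref{relGc}.
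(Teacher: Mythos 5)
Your proposal is correct and follows essentially the same route as the paper: fixing $\eta$ on the free generators via the polynomial-ring isomorphisms \eqref{isocarG} and \eqref{isoG}, matching the exchange relations against Proposition \ref{relGc} to pin down $x_2,x_4,x_6,x_8$ and $\eta(\lambda_i)=\lbrack L(\mathbf{Y}_i)\rbrack$, and then deducing $\eta(\mathcal H)=B$ from Theorem \ref{decG} together with the description of $\lbrack L(\mathbf{Y}_1^{a_1}\mathbf{Y}_2^{a_2})\rbrack$ as $S_{a_1,a_2}(\lbrack L(\mathbf{Y}_1)\rbrack,\lbrack L(\mathbf{Y}_2)\rbrack)$. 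The extra care you take with the bijection between the eight clusters and the eight $l$-acyclic families, and with multiplicativity via Lemma \ref{caracA2}, merely makes explicit what the paper leaves implicit.
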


More precisely, the clusters can be organised as in Figure \ref{mod:g2}.

\begin{figure}[!ht]
\begin{center}
%\hspace*{-2cm}
\small
$\xymatrix @R=1pc @C=0.8pc @M=0.0pc   { &\mbox{$\begin{array}{c}\lbrack L(Y_{1,0})\rbrack,\\\lbrack L(Y_{1,0}Y_{2,1})\rbrack\end{array} $} \ar@{-}[r]^{\mu_1}\ar@{-}[dl]^{\mu_2} & \mbox{$\begin{array}{c}\lbrack L(Y_{2,1})\rbrack,\\\lbrack L(Y_{1,0}Y_{2,1})\rbrack\end{array}$} \ar@{-}[dr]^{\mu_2} &\\
 \mbox{$\begin{array}{c}\lbrack L(Y_{1,0})\rbrack,\\\lbrack L(Y_{1,0}Y_{2,3})\rbrack\end{array}$}\ar@{-}[d]^{\mu_1} &&&\mbox{$\begin{array}{c}\lbrack L(Y_{2,1})\rbrack,\\\lbrack L(Y_{1,2}Y_{2,1})\rbrack\end{array}$}\ar@{-}[d]^{\mu_1} \\
 \mbox{$\begin{array}{c}\lbrack L(Y_{2,3})\rbrack,\\\lbrack L(Y_{1,0}Y_{2,3})\rbrack\end{array}$}\ar@{-}[dr]^{\mu_2}&&&\mbox{$\begin{array}{c}\lbrack L(Y_{1,2})\rbrack,\\\lbrack L(Y_{1,2}Y_{2,1})\rbrack\end{array}$}\ar@{-}[dl]^{\mu_2} \\
& \mbox{$\begin{array}{c}\lbrack L(Y_{2,3})\rbrack\\\lbrack L(Y_{1,2}Y_{2,3})\rbrack\end{array}$}\ar@{-}[r]^{\mu_1}&  \mbox{$\begin{array}{c}\lbrack L(Y_{1,2})\rbrack\\\lbrack L(Y_{1,2}Y_{2,3})\rbrack\end{array}$}
}$
\end{center}
\caption{The cluster structure for $A_2$, $l=2$}\label{mod:g2}
\end{figure}
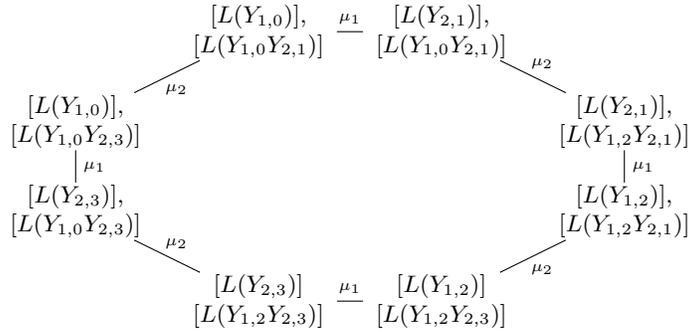
\normalsize
%In terms of triangulations, the $\lbrack L(Y_{i,a})\rbrack$ correspond to triples of parallel diagonals of the octagon (each triple includes a diameter), and the $\lbrack L(Y_{1,a}Y_{2,b})\rbrack$ can be seen as couples of centrally-symmetric diagonals, so that the vertices $(i,a)$ outside the diagonals correspond to the variables appearing in the dominant monomial.

%For example, the cluster $(\lbrack L(Y_{1,0})\rbrack, \lbrack L(Y_{1,0}Y_{2,1})\rbrack)$ is encoded by
%\begin{equation*}\xymatrix  @R=1.5pc @C=1.5pc @M=0.0pc  { &(1,0)\ar@{-}[r]\ar@{-}[dl] & (2,1)\ar@{-}[dr]\ar@{.}[dll] &\\
%\overline{(2,3)}\ar@{-}[d]\ar@{--}[rrr] &&&(1,2)\ar@{-}[d]\ar@{.}[dlll] \\
%\overline{(1,2)}\ar@{-}[dr]\ar@{--}[rrr]&&&(2,3)\ar@{-}[dl]\ar@{.}[dll] \\
%&\overline{(2,1)}\ar@{-}[r]& \overline{(1,0)}
%}\end{equation*}
%Here $\lbrack L(Y_{1,0})\rbrack$ is depicted by $\xymatrix{ \ar@{.}[r] &}$ and $\lbrack L(Y_{1,0}Y_{2,1})\rbrack$ by $\xymatrix{\ar@{--}[r] &}$.

\begin{proof}
The proof is analogous to the $A_1$ case. The isomorphisms \eqref{isocarG} and \eqref{isoG} allow us to fix a ring isomorphism $\eta$, which sends each cluster variable $x_1,x_3,x_5,x_7$ to the class of the corresponding fundamental module as written above in \eqref{isoGcorr}. Moreover, Proposition \ref{relGc} and the definition of $\mathcal G$ imply that the cluster variables and their images under $\eta$ satisfy the same relations, where $\chi_\varepsilon(L(\mathbf{Y}_{i}))$ is replaced by $\lambda_i$. Therefore, we have $\eta(\lambda_i)=  \lbrack L(\mathbf{Y}_{i}) \rbrack$ for $i=1,2$, and the correspondence between the basis $\mathcal E$ and the basis of classes of standard module immediately follows.

Moreover, Theorem \ref{decG} and the description of the classes  $\lbrack L(\mathbf{Y}_{1}^{a_1} \mathbf{Y}_{2}^{a_2})\rbrack$ as the polynomials $S_{a_1,a_2}(\lbrack L(\mathbf{Y}_{1})\rbrack ,\lbrack L(\mathbf{Y}_{2})\rbrack)$, imply that the basis of classes of simple modules in $R$ consists of elements of the form \begin{equation}S_{a_1,a_2}(\lbrack L(\mathbf{Y}_{1})\rbrack ,\lbrack L(\mathbf{Y}_{2})\rbrack)\cdot M,\end{equation} where $a_1,a_2\in\mathbb N$ and $M$ is the class of one of the eight tensor products described in Theorem \ref{decG} (which corresponds to an element of the set $\mathcal M_0$ in $\mathcal G$). This allows us to conclude that the basis $\mathcal H$ of $\mathcal G$ is mapped to the basis $B$ of $K_0(\CZres)$. \hfill $\Box$

\end{proof}

\subsection{The case \texorpdfstring{$A_2$, $l>2$}{A2,l>2}}

Our various computations have led us to the following conjecture.

\begin{conjecture}\label{conjA2}
For $l\geq 2$, the Grothendieck ring of $\CZres$, is isomorphic to a generalised cluster algebra $\mathcal G_l$ of rank $2l-2$. An initial seed of $\mathcal G_l$ is given by the exchange matrix
\begin{equation}
\left( \begin{array}{cccccccccccccccccccccccccccccccccccccccccccccccccccccccccccccccccccccccccccccccccccccccccccccccccccccccc}
0&-1&1&0&\dots&0&\dots&0&    0&0&0\\
1&0&-1&1&0&0 &\dots&0& 0&0&0 \\
-1&1&0&-1&1&0 &\ddots& 0&0&0&0        \\ 
 0&-1&1&0&-1&1&0&\vdots&\vdots&\vdots&\vdots           \\
 
%\vdots&\ddots&\ddots&\ddots&\ddots&\ddots&\ddots&\ddots&&&&         \\
0&\ddots&\ddots&\ddots&\ddots&\ddots&\ddots&\ddots&     0&0&0\\
0&\dots&0&-1&1&0&-1&1&     0&0&0\\
0&\dots&&0&-1&1&0&-1&     1&0&0\\
&&&0&0&-1&1&0&     -1&1&0\\
\vdots&\ddots&&0&0&0&-1&1&     0&-2&3\\
&&&0&0&0&0&-1&     2&0&-3\\
0&&\dots&0&0&0&0&0&     -1&1&0
\end{array}  \right), 
\end{equation}
%\hspace*{-1.8cm}
%$\xymatrix  @R=1.5pc @C=1.5pc @M=0.0pc   { &x_2 \ar[dl]\ar[rr]&& x_4\ar[dl] %\ar[r]&\dots \ar[r] &x_{2l-6}\ar[dl] \ar[rr] && \ar[dl]x_{2l-4} \ar[rr]^3 && %x_{2l-2}\ar[dl]^3 \\
%x_1 \ar[rr]&& x_3\ar[ul] \ar[rr]&&\dots \ar[rr] &&x_{2l-5} \ar[ul]\ar[rr] && %x_{2l-3}\ar[ul]_2 }$
the cluster variables correspond to
\begin{eqnarray*}
&&x_{2k+1}= \lbrack L(Y_{1,0}Y_{1,2l-2}\dots Y_{1,2l-2k})\rbrack\quad(k\in\llbracket 0,l-2\rrbracket),\\&&x_{2k}=\lbrack L(Y_{2,2l-1}Y_{2,2l-3}\dots Y_{2,2l-2k+1})\rbrack\quad(k\in\llbracket 1,l-2\rrbracket),\\&&
x_{2l-2}=\lbrack L(Y_{1,0}Y_{1,2l-2}Y_{1,2l-4}\dots Y_{1,4}Y_{2,2l-1}Y_{2,2l-3}Y_{2,2l-5}\dots Y_{2,5}Y_{2,3})\rbrack.
\end{eqnarray*}
The coefficients are $\lambda_i=\lbrack L(\mathbf{Y}_{i})\rbrack,\:i=1,2,$ where \begin{equation*}
\mathbf{Y}_1= Y_{1,0}Y_{1,2}\dots Y_{1,2l-2}\quad\mbox{and}\quad \mathbf{Y}_2=Y_{2,1}Y_{2,3}\dots Y_{2,2l-1}.
\end{equation*} %$\xi_i=0$ if $i$ is odd and $\xi_i=1$ if $i$ is even.

The initial exchange polynomials are $\theta_r^0 (u,v)=u+v$ for $r\in\llbracket 1,2l-3\rrbracket$, and $\theta_{2l-2}^0(u,v)= u^3 + \lambda_1 u^2 v + \lambda_2uv^2+ v^3$.

%Moreover, under this bijection, the isomorphism classes of simple modules correspond to monomials in the cluster variables.
Moereover, the generalised cluster monomials are mapped to classes of simple modules.

\end{conjecture}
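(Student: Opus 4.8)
The plan is to follow the same two-step template used to prove Theorem~\ref{conjA1} and Theorem~\ref{thmA2l2}: first fix a ring isomorphism $\eta\colon\mathcal{G}_l\to R$ on a set of polynomial generators, then promote it to a bijection between distinguished $\mathbb{Z}$-bases. By Section~\ref{s223}, $R=K_0(\CZres)$ is the polynomial ring over $\mathbb{Z}$ in the classes of the $2l$ fundamental modules $\lbrack L(Y_{1,\varepsilon^{2k}})\rbrack$ and $\lbrack L(Y_{2,\varepsilon^{2k+1}})\rbrack$. The first task is therefore to establish that $\mathcal{G}_l$ is likewise a polynomial ring in $2l$ suitably chosen cluster variables (the infinite-type analogue of the small variables of Proposition~\ref{genbasisA}) and to send these generators to the matching fundamental classes. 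One must then verify that the exchange relations of $\mathcal{G}_l$ are satisfied by the $\varepsilon$-characters of the images: the polynomials $\theta_r^0$ for $r\le 2l-3$ should reproduce the ordinary $T$-system of $\mathfrak{sl}_3$-type, whereas the single cubic relation $\theta_{2l-2}^0$ should reproduce a generalised relation of the shape found in Proposition~\ref{relGc}, which is precisely what forces $\eta(\lambda_i)=\lbrack L(\mathbf{Y}_i)\rbrack$.

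The representation-theoretic input required is an $\mathfrak{sl}_3$ analogue of Theorem~\ref{KRA}: a classification of the simple objects of $\CZres$ as tensor products of prime modules $L(m^0)$ with $m^0$ being $l$-acyclic, together with a combinatorial condition describing which families of primes may occur simultaneously. Given such a classification, the Decomposition Theorem~\ref{decthm} factors every simple class as $\mathrm{Fr}^*(\lbrack V(a_1\varpi_1+a_2\varpi_2)\rbrack)\cdot\lbrack L(m^0)\rbrack$, and the Frobenius factor equals the polynomial $S_{a_1,a_2}(\lbrack L(\mathbf{Y}_1)\rbrack,\lbrack L(\mathbf{Y}_2)\rbrack)$ obtained from the Littlewood--Richardson rule for $\mathfrak{sl}_3$. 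Matching this against the proposed basis of $\mathcal{G}_l$, namely generalised cluster monomials multiplied by the $S_{a_1,a_2}(\lambda_1,\lambda_2)$, then reduces to showing that the compatibility condition among $l$-acyclic primes coincides exactly with cluster-variable compatibility in $\mathcal{G}_l$: two primes $L(m^0)$ appear in a common simple tensor product if and only if the corresponding cluster variables lie in a common cluster.

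The hard part, and the reason the statement remains conjectural, is that for $l>2$ the algebra $\mathcal{G}_l$ is of \emph{infinite} type, so none of the finite combinatorial devices used in the two proven cases survive. In types $C_{l-1}$ and $G_2$ one read off both the polynomial presentation and the basis bijection from a finite geometric model (the cyclohedron of $\mathbf{P}_{2l}$, resp.\ the eight clusters of Figure~\ref{ex:g2}) by direct enumeration; here there are infinitely many clusters and no triangulation model is known. One would need an intrinsic proof that $\mathcal{G}_l$ is a polynomial ring and that its generalised cluster monomials form a $\mathbb{Z}$-basis --- presumably through a unitriangularity argument in the spirit of Proposition~\ref{genbasisA} but valid in infinite type --- as well as a description of cluster compatibility not routed through a finite polygon. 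Equivalently, a monoidal-categorification argument in the spirit of Hernandez and Leclerc, showing directly that the classes of simple objects of $\CZres$ mutate according to the exchange relations of $\mathcal{G}_l$, would settle spanning and linear independence at once; supplying such tools in the root-of-unity setting is exactly what is presently lacking.
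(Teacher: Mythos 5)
This statement is a \emph{conjecture} in the paper, and the paper offers no proof of it: the author proves only the $l=2$ case (Theorem~\ref{thmA2l2}, type $G_2$) and states explicitly in the introduction and in the remark following the conjecture that for $l>2$ the relevant generalised cluster algebras are of infinite type and ``we still lack the proper tools to prove it.'' So there is no paper proof to compare yours against. What you have written is, accordingly, not a proof but a roadmap, and to your credit you say so yourself. As a roadmap it is faithful to the paper's template: fix $\eta$ on the polynomial generators using the presentation of $R$ from Section~\ref{s223}, match the exchange relations against the $T$-system and a cubic relation of the shape in Proposition~\ref{relGc} to force $\eta(\lambda_i)=\lbrack L(\mathbf{Y}_i)\rbrack$, then use Theorem~\ref{decthm} and the Frobenius factors $S_{a_1,a_2}(\lbrack L(\mathbf{Y}_1)\rbrack,\lbrack L(\mathbf{Y}_2)\rbrack)$ to reduce everything to a classification of simples with $l$-acyclic highest monomial. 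You also correctly identify the two genuinely missing inputs: an $\mathfrak{sl}_3$ analogue of Theorem~\ref{KRA}, and a replacement for the finite polygon/cyclohedron combinatorics (or a unitriangularity argument à la Proposition~\ref{genbasisA} valid in infinite type).

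One substantive overreach: your plan aims at a \emph{bijection} --- ``two primes $L(m^0)$ appear in a common simple tensor product if and only if the corresponding cluster variables lie in a common cluster'' --- and hence at the generalised cluster monomials (times $S_{a_1,a_2}$) forming a $\mathbb{Z}$-basis matching \emph{all} simple classes. The paper's own remark after the conjecture explicitly retreats from this for $l>2$: since $\mathcal{G}_l$ is of infinite type, ``we can only hope for an inclusion of the set of cluster monomials in the set of classes of simple modules $L(m)$ where $m$ is $l$-acyclic,'' and indeed the conjecture itself only asserts that cluster monomials are mapped to simple classes, not that they exhaust them. In infinite type one expects simple objects (analogues of the ``imaginary'' modules in the Hernandez--Leclerc setting) whose classes are not cluster monomials, so any proof strategy premised on the compatibility correspondence being an equivalence would fail at that step even if the other missing tools were supplied. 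With that correction, your assessment of the state of the problem agrees with the paper's.
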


%The polynomials above are unaffected by mutation. We observe that the only polynomial corresponding to a non-standard mutation has degree 3, which is the Coxeter number for $A_2$.

\begin{remark}
For $l>2$, the above generalised cluster algebras are of infinite type, so we can only hope for an inclusion of the set of cluster monomials in the set of classes of simple modules $L(m)$ where $m$ is $l$-acyclic.
\end{remark}

\renewcommand{\refname}{References}
\frenchspacing

\flushleft LMNO, CNRS UMR 6139, Université de Caen, 14032 Caen cedex, France\\
email: \url{anne-sophie.gleitz@unicaen.fr}

\end{document}